\numberwithin{equation}{section}
\newtheorem{atheorem}{Theorem}
\newtheorem{maintheorem}{Theorem}
\newtheorem{theorem}{Theorem}[section]
\newtheorem{lemma}[theorem]{Lemma}
\newtheorem{proposition}[theorem]{Proposition}
\newtheorem{problem}[theorem]{Problem}
\newtheorem{corollary}[theorem]{Corollary}
\newtheorem{definition}[theorem]{Definition}
\newtheorem{remark}[theorem]{Remark}
\newtheorem{claim}[theorem]{Claim}
\renewcommand{\Pr}{ \mathrm P}
\newcommand{\eps}{\epsilon}
\newcommand{\vv}{\mathrm{v}}
\newcommand{\sss}{\mathrm{s}}
\newcommand{\con}{\mathrm{Cap}}
\newcommand{\X}{\mathbf{X}}
\newcommand{\Ind}[1]{\mathbf{1}\{#1\}}
\newcommand{\pd}{\partial}
\DeclareMathSymbol{\leqslant}{\mathalpha}{AMSa}{"36} 
\DeclareMathSymbol{\geqslant}{\mathalpha}{AMSa}{"3E} 
\DeclareMathSymbol{\eset}{\mathalpha}{AMSb}{"3F}     
\renewcommand{\epsilon}{\varepsilon}
\newcommand{\sfrac}[2]{\mbox{\small $\frac{#1}{#2}$}}
\newcommand{\ssfrac}[2]{\mbox{\footnotesize $\frac{#1}{#2}$}}
\newcommand{\half}{\ssfrac{1}{2}}
\newcommand{\N}{\mathbb N}
\newcommand{\R}{\mathbb R}
\newcommand{\Z}{\mathbb Z}
\begin{document}

\title{Recurrence of Markov chain traces}
\author{Itai Benjamini
\thanks{Department of Mathematics, Weizmann Institute
of Science, Rehovot 76100,
Israel.  e-mail: {\tt itai.benjamini@weizmann.ac.il}}
\and Jonathan Hermon
\thanks{
University of Cambridge, Cambridge, UK. E-mail: {\tt jonathan.hermon@statslab.cam.ac.uk}. Financial support by
the EPSRC grant EP/L018896/1.}
}

\date{}
\maketitle

\begin{abstract}
It is shown that transient graphs for the simple random walk do not admit a nearest neighbor transient Markov chain (not necessarily a reversible one), that crosses all edges with positive probability, 
while there is such chain for the square grid $\Z^2$. In particular,  the $d$-dimensional grid $\Z^d$ admits such a Markov chain  only when $d=2$. For $d=2$ we present a relevant example due to Gady Kozma, while the general statement for transient graphs is obtained by proving that for every transient irreducible Markov chain on a countable state space which admits a stationary measure, its trace is almost surely recurrent for simple random walk. The case that the Markov chain is reversible is due to Gurel-Gurevich, Lyons and the first named author (2007). We exploit recent results in potential theory of non-reversible Markov chains in order to extend their  result to the non-reversible setup. 
\end{abstract}


\paragraph*{\bf Keywords:}
{\small Recurrence, trace, capacity.
}

\section{Introduction}
In this paper we prove that for every transient nearest neighbourhood Markov chain (not necessarily a reversible one)    on a graph, admitting a stationary measure, the (random)  graph  formed by the vertices it visited and edges it crossed is a.s.\ recurrent for simple random walk. We use this to give partial answers to the following question: Which graphs admit  a  nearest neighbourhood transient  Markov chain that crosses all
edges with positive probability? 

\medskip

We start with some notation and definitions. 
Let $G=(V,E)$ be a  connected graph\footnote{We allow it to contain loops, i.e.~$E \subseteq \{\{ u,v\}:u,v \in V \} $.}. We say that $G$ is \emph{locally finite} if each vertex has a finite degree.
A \emph{simple random walk} (\textbf{SRW}) on $G$ is a (reversible) Markov chain  with state space $V$ and transition probabilities: $P(x,y)=\Ind{x \sim y}/\deg(x)$, where $x \sim y$ indicates that $\{x,y\} \in E$. Some background on Markov chains will be given in \S\ref{s:Markov} (for further background on Markov chains see, e.g.~\cite{aldous,levin,lyons}).

We call  $(V, E,c)$ (or $(G,c)$) a \emph{network}  if $G=(V,E)$ is a graph
and $c=(c_{e})_{e
\in E}$ are symmetric edge weights (i.e.~$c_{u, v}=c_{v,u} \ge 0 $ for every $\{u,v \} \in E$ and $c_{u,v}=0$ if $\{u,v\} \notin E$). We say that the network is \emph{locally finite} if $c_v:=\sum_{w} c_{v, w}< \infty $ for all $v \in V $. We emphasize that the network $(G,c)$ can be locally finite even when the graph $G$ is not locally finite. We shall only consider locally finite networks (even when this is not written explicitly). The  (weighted) nearest neighbor   random walk corresponding to    $(V, E,c)$ repeatedly does the following: when the current state is $v\in V$, the walk will move to vertex $u$  with probability $c_{u, v}/c_{v}$.
The  choice  $c_{e}=\Ind{e \in E }$ corresponds to SRW  on $(V,E)$. We denote this network by $(V,E, \mathbf{1} )$.

Let $\mathbf{X}:= (X_n)_{n \in \N }$ be an irreducible Markov chain on a countable state space $V$. Denote its transition kernel by $P$. Let $\pi$ be a stationary measure.   Consider the (undirected) graph supporting its transitions \begin{equation}
\label{e:G(X)}
G(\X):=(V,E) \quad \text{where} \quad E:=\{ \{u,v\}:P(u,v)+P(v,u)>0 \}.\end{equation} Note that the graph $G(\X)$ need not be locally finite and that it is possible that $P(x,y)=0$ for some $\{x,y\} \in E$. Harris \cite{Harris} proved that under a mild condition  $\X$ admits a stationary measure. It follows from his general condition that if each state $x $ can be accessed in one step from only finitely many states (i.e.\ $|\{y:P(y,x)>0 \}|<\infty$ for all $x$; note that this is a weaker condition than $G(\X)$ being locally finite) then $\X$ admits a stationary measure. Fix an initial state $o$. We say that $\mathbf{X} $ is \textbf{recurrent} if $o$ is visited infinitely often a.s.. Otherwise,  $\X$ is said to be \textbf{transient}.
 Let \[N(x,y)=N(y,x):=| \{n:  \{X_n,X_{n+1}\}= \{x,y  \} \} |  \] be the number of (undirected) crossings of the edge $\{x,y \}$. Let $\mathrm{PATH} $ be the graph formed by the collection of states visited by the chain  $V(\mathrm{PATH}):=\{X_n : n \in \Z_+  \} $ and the collection of edges crossed by the chain (in at least one direction) \[E(\mathrm{PATH}):=\{e \in E:N(e)>0 \}=\{ \{X_n,X_{n+1}\} : n \in \Z_+  \},\] where $\Z_+$ is the set of non-negative integers.  We show that if $\mathbf{X}$ is transient (and admits a stationary measure) then a.s.\ $\mathrm{PATH} $ forms a (random) recurrent subgraph of $G$ for simple random walk. In fact, the same is true even when the edges are
weighted by the number of crossings or by their expectations. This is completely analogous to the main result from \cite{traces} apart from the fact that we do not assume $\X$ to be reversible, nor $G(\X)$ to be locally finite. Paraphrasing \cite{traces}, the way to interpret Theorem \ref{thm:1} is that the trace of transient Markov chains (admitting a stationary measure) is in some sense ``small".
\begin{maintheorem}
\label{thm:1}
Let $\mathbf{X}$ be a transient irreducible Markov chain, admitting a stationary measure, on a countable state space $V$. Let $\mathrm{PATH},N $ and $G(\X)$ be as above. Then the following hold:
\begin{itemize}
\item[(i)]
The (random) graph $\mathrm{PATH}$ is a.s.\ recurrent for SRW.
 \item[(ii)]
 The random walk on the (random) network $(\mathrm{PATH},N)$ (in which the edge weight of an edge $e $ is $N(e)$) is a.s.\ recurrent.
\item[(iii)] For every $o \in V$ the random walk on the network $(G(\X),\mathbb{E}_o[N]) $ (in which the edge weight of an edge $e $ is $\mathbb{E}_o[N(e)]$) is recurrent.
\end{itemize}
\end{maintheorem}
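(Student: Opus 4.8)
The plan is to reduce everything to a statement about capacities and then invoke the non-reversible potential theory that the abstract alludes to. Recall the standard criterion: a network is recurrent for its weighted random walk if and only if the effective conductance from a fixed vertex $o$ to infinity is zero, equivalently if the capacity $\con(o)$ (computed in the network) vanishes. So parts (i)--(iii) are really three instances of a single claim, ordered by increasing strength of the edge weights: $\mathbf{1}\{N(e)>0\} \leq N(e)$ pointwise, and $\mathbb{E}_o[N(e)]$ dominates (in a suitable averaged sense) what we need for (iii). By Rayleigh monotonicity, (iii) $\Rightarrow$ (ii) $\Rightarrow$ (i) once the comparison is set up correctly: if the walk on $(G(\X),\mathbb{E}_o[N])$ is recurrent then, since $\mathbb{E}_o[N(e)] \geq$ (a constant times) the effective weights appearing in the trace, and since recurrence only gets easier as conductances shrink, the sparser networks are recurrent too. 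Thus the first step is to record these monotonicity reductions carefully, so that it suffices to prove (iii).

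Next I would attack (iii) directly. The key input is that $\X$ is transient with a stationary measure $\pi$; transience means there is a nonconstant bounded (or finite-energy) harmonic-type object, or more usefully, that the Green's function $G_P(o,o) = \sum_n P^n(o,o)$ is finite. I would pass to the ``symmetrized'' or ``additive reversibilization'' network: on the graph $G(\X)$ put edge weights $a_{u,v} := \pi(u)P(u,v) + \pi(v)P(v,u)$. This is a genuine reversible network (weights are symmetric by construction), and $\mathbb{E}_o[N(e)]$ is comparable to $a_{u,v}$ up to factors depending on the trajectory's occupation measure — more precisely, $\mathbb{E}_o[N(\{u,v\})] = \sum_n (\mathbb{P}_o(X_n = u)P(u,v) + \mathbb{P}_o(X_n=v)P(v,u))$, and using transience plus a comparison of $\mathbb{P}_o(X_n = \cdot)$ with $\pi$, one controls this by the Green's function weighted by $a_{u,v}$. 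The heart of the argument is then: \emph{the reversibilized network $(G(\X), a)$ is recurrent.} Here is where I would cite the recent non-reversible potential theory — the point being that transience of the non-reversible chain $\X$ does \emph{not} imply transience of its additive reversibilization, but what one needs is the converse-flavored statement, namely that finiteness of the non-reversible Green's function forces the symmetrized capacity to vanish on the trace. I expect the precise mechanism to be: the expected crossing numbers define a flow of finite energy from $o$ to infinity in the \emph{non-reversible} sense only if the chain is transient, and conversely controlling $\mathbb{E}_o[N]$ via the stationary measure yields a unit flow whose energy in the reversibilized network can be made arbitrarily small, forcing $\con(o) = 0$.

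The main obstacle, and the step I would spend the most care on, is exactly this last comparison: showing that $\{\mathbb{E}_o[N(e)]\}_e$ gives rise to a recurrent network. In the reversible case of \cite{traces} one uses that $\mathbb{E}_o[N(\{u,v\})] = \pi(u)P(u,v)\cdot G_{\text{something}}$ telescopes cleanly because detailed balance makes the two directional expected-crossing counts equal up to the stationary measure; without reversibility, the two directions decouple and one must argue that the \emph{undirected} crossing count still behaves well. I would handle this by splitting $N(\{u,v\}) = N_{\to}(u,v) + N_{\to}(v,u)$ into directed crossings, noting $\mathbb{E}_o[N_{\to}(u,v)] = G_P(o,u) P(u,v)$ where $G_P(o,u) = \sum_n \mathbb{P}_o(X_n=u) < \infty$ by transience, and then observing that the function $u \mapsto G_P(o,u)$ plays the role of a (super/sub)harmonic weight that lets us build a finite-energy flow: the current $\theta(\vec{e}) := G_P(o,u)P(u,v) - G_P(o,v)P(v,u)$ across $\{u,v\}$ from $u$ to $v$ is, by the Green's function identity, a unit flow from $o$ to infinity, and its energy in the network with conductances $\mathbb{E}_o[N(e)] \geq |\theta(e)|$ (roughly) is at most $\sum_e |\theta(e)| \leq$ the total expected number of steps weighted appropriately — which is \emph{infinite}, so this crude bound fails and one instead must use the capacity characterization via the Dirichlet principle together with the recent non-reversible results to get the energy bound. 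I would therefore structure the final step as: (a) identify $u \mapsto G_P(o,u)$ as $P$-excessive, (b) use it to transfer to an honest reversible network where \cite{traces}-style arguments apply, (c) invoke the cited non-reversible potential theory to justify that this transfer preserves recurrence of the trace, and (d) conclude $\con(o)=0$ a.s., hence recurrence.
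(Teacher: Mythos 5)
There is a genuine gap at the step you yourself call ``the heart of the argument'': the claim that the additively reversibilized network $(G(\mathbf{X}),c_{\mathrm{s}})$, $c_{\mathrm{s}}(u,v)=\tfrac12[\pi(u)P(u,v)+\pi(v)P(v,u)]$, is recurrent is simply false in general. Take the biased walk on $\Z$ with $P(i,i+1)=p>1/2$ and the stationary measure $\pi(i)=(p/(1-p))^i$: its additive symmetrization is $P$ itself, which is transient. The paper splits the proof of (iii) into two cases, and the case where $\mathbf{X}^{\mathrm{s}}$ is recurrent is the easy one (then $\mathbb{E}_o[N(x,y)]\le 2\alpha c_{\mathrm{s}}(x,y)$ and Rayleigh monotonicity finishes). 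The real work is the case where $\mathbf{X}^{\mathrm{s}}$ is transient, and there one must exploit the extra factor that distinguishes $\mathbb{E}_o[N]$ from $c_{\mathrm{s}}$, namely the voltage of the \emph{time-reversal}: $\mathbb{E}_o[N(x,y)]=\alpha[\pi(x)P(x,y)v^*(x)+\pi(y)P(y,x)v^*(y)]\le 2\alpha c_{\mathrm{s}}(x,y)\max\{v^*(x),v^*(y)\}$. The paper proves $\mathrm{Cap}(A_{\delta}^*;\mathbb{E}_o[N])\le 2$ for every $\delta>0$, where $A_{\delta}^*=\{v^*\ge\delta\}$, by constructing an auxiliary chain in which the boundary edges of $A_\delta^*$ are subdivided so that the reversed voltage equals exactly $\delta$ on the new boundary, and then combining the restriction lemma for capacities, the Gaudilli\`ere--Landim inequalities $\mathrm{Cap}_{\mathrm{s}}\le\mathrm{Cap}=\mathrm{Cap}_*$, and the bound $\mathrm{Cap}_*(A_\delta^*)\le\mathrm{Cap}_*(o)/\delta$; recurrence then follows because a transient network would have finite sets of arbitrarily large capacity while $A_\delta^*\nearrow V$. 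Your proposal never identifies this mechanism: after (correctly) observing that the naive flow-energy bound is infinite, you fall back on ``invoke the cited non-reversible potential theory to justify that this transfer preserves recurrence of the trace,'' which is not an argument --- the cited results (capacity comparisons and monotonicity) do not by themselves yield recurrence of $(G(\mathbf{X}),\mathbb{E}_o[N])$, precisely because $(G(\mathbf{X}),c_{\mathrm{s}})$ may be transient.

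A secondary, smaller issue: your reduction (iii)$\Rightarrow$(ii) is not plain Rayleigh monotonicity, since the random weights $N(e)$ are not pointwise dominated by $\mathbb{E}_o[N(e)]$. The correct step is the concavity of capacity under expectation coming from the Dirichlet principle, $\mathbb{E}_o[\mathrm{Cap}(A;N)]\le\mathrm{Cap}(A;\mathbb{E}_o[N])$, applied with $A=\{o\}$: recurrence of $(G(\mathbf{X}),\mathbb{E}_o[N])$ gives $\mathrm{Cap}(o;\mathbb{E}_o[N])=0$, hence $\mathrm{Cap}(o;N)=0$ almost surely. The step (ii)$\Rightarrow$(i) via Rayleigh and $N\ge1$ on the edges of $\mathrm{PATH}$ is fine.
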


\begin{remark}
As noted above, if $|\{y:P(y,x)>0 \}|<\infty$ for all $x$ then the chain admits a stationary measure. In \S\ref{s:relax} we relax the condition that $\X$ admits a stationary measure. 
\end{remark}

Theorem \ref{c:main} is obtained as an immediate corollary of Theorem \ref{thm:1}.
\begin{maintheorem}
\label{c:main}
Let $G=(V,E)$ be some connected locally finite graph which is transient for SRW. Then there does not exist an irreducible transient Markov chain $\X$ with $G(\X)=G $ which with positive probability crosses each edge (in at least one direction) at least once. In fact, by Rayleigh's monotonicity principle, there does not exist an irreducible transient Markov chain $\X$ admitting a stationary measure such that $G$ is a subgraph of $G(\X) $, which with positive probability crosses every edge of $E$ (in at least one direction) at least once.
\end{maintheorem}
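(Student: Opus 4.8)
The plan is to derive Theorem~\ref{c:main} directly from Theorem~\ref{thm:1} by a contradiction argument combined with Rayleigh's monotonicity principle. Suppose, toward a contradiction, that $\X$ is an irreducible transient Markov chain admitting a stationary measure such that $G$ is a subgraph of $G(\X)$ and such that with positive probability $\X$ crosses every edge of $E$ (in at least one direction) at least once. By Theorem~\ref{thm:1}(ii), the random walk on the network $(\mathrm{PATH}, N)$ is almost surely recurrent, where $\mathrm{PATH}$ is the trace of $\X$ and $N(e)$ records the number of crossings of $e$. On the event of positive probability that $\X$ crosses every edge of $E$, we have $E \subseteq E(\mathrm{PATH})$, so the network $(\mathrm{PATH}, N)$ contains, as a subnetwork on a subset of its vertices, the network $(G, \mathbf{1})$ up to the positive edge weights $N(e) \ge 1$ on the edges of $E$; more precisely, every edge of $G$ appears in $\mathrm{PATH}$ with conductance $N(e) \ge 1 = \mathbf{1}(e)$.

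Now I would invoke Rayleigh's monotonicity principle, which states that adding edges or increasing conductances can only decrease the effective resistance from a fixed vertex $o$ to infinity; equivalently, recurrence of a network is preserved under passing to a subnetwork with smaller-or-equal conductances. Since $G$ is transient for SRW, the network $(G, \mathbf{1})$ has finite effective resistance to infinity from $o$. But $(G,\mathbf{1})$ is obtained from $(\mathrm{PATH}, N)$ by deleting vertices and edges and by decreasing the conductance of each surviving edge $e$ from $N(e) \ge 1$ down to $1$ — both operations that can only \emph{increase} effective resistance. Hence on this positive-probability event, $(\mathrm{PATH}, N)$ would have to be transient (its effective resistance to infinity is at most that of a transient network on a smaller vertex set, after the monotonicity comparison is set up correctly). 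This contradicts the almost-sure recurrence guaranteed by Theorem~\ref{thm:1}(ii), establishing the first assertion; the second (about $G$ being a subgraph of $G(\X)$) follows by the same comparison since $\mathrm{PATH} \subseteq G(\X)$ as graphs and the relevant conductances still dominate $\mathbf{1}$ on $E$.

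The main obstacle, and the point requiring care, is the direction of the monotonicity comparison: Theorem~\ref{thm:1}(ii) gives recurrence of $(\mathrm{PATH}, N)$, and we need transience of a smaller network $(G, \mathbf{1})$ to produce a contradiction, so we must check that restricting to the subnetwork $(G,\mathbf{1})$ of $(\mathrm{PATH}, N)$ (deleting the extra vertices/edges of $\mathrm{PATH}$ not in $G$, and lowering conductances to $1$) genuinely raises the effective resistance — i.e.\ that we are comparing in the direction Rayleigh's principle permits. This is exactly the standard statement: a subnetwork with pointwise-smaller conductances has pointwise-larger effective resistances, so transience of the smaller network forces transience of the larger. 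Thus transience of $(G,\mathbf{1})$ forces transience of $(\mathrm{PATH}, N)$, contradicting recurrence. A secondary subtlety is the null-set bookkeeping: ``with positive probability crosses every edge'' intersected with the almost-sure recurrence event from Theorem~\ref{thm:1}(ii) is still a positive-probability event, on which we obtain the contradiction — that suffices. No further input beyond Theorem~\ref{thm:1} and Rayleigh monotonicity is needed, so the proof is genuinely immediate once the comparison is oriented correctly.
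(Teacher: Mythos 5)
Your argument is correct and is essentially the paper's intended one: Theorem~\ref{c:main} is stated there as an immediate corollary of Theorem~\ref{thm:1}, with Rayleigh's monotonicity principle supplying the comparison, and you have oriented the monotonicity correctly (transience of the sparser, lower-conductance network $(G,\mathbf{1})$ forces transience of $(\mathrm{PATH},N)$, contradicting a.s.\ recurrence on a positive-probability event); using part (ii) with $N\ge \mathbf{1}$ rather than part (i) is an immaterial variation, since in the paper (ii) implies (i) anyway. The one small point you should add concerns the first assertion: as stated it does \emph{not} assume that $\X$ admits a stationary measure, whereas you build that assumption into your contradiction hypothesis and Theorem~\ref{thm:1} requires it; the bridge is that $G(\X)=G$ locally finite implies $|\{y:P(y,x)>0\}|<\infty$ for every $x$, so by Harris's criterion (as noted in the paper's remark after Theorem~\ref{thm:1}) a stationary measure automatically exists, after which your argument applies verbatim.
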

One may wonder for which graphs $G$ there exists an irreducible transient Markov chain $\X$ with $G(\X)=G $ which with positive probability crosses each edge at least once (see \S\ref{s:open} for several related open questions). The following theorem gives a complete answer in the case that $G$ is the $d$-dimensional lattice $\Z^d$.

\begin{maintheorem}
\label{thm:3}
There exists an irreducible transient Markov chain $\X$ with $G(\X)=\Z^d$ which with positive probability crosses all edges (in at least one direction) iff $d=2$.    
\end{maintheorem}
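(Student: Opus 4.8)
The plan is to split Theorem~\ref{thm:3} into its two directions: the ``only if'' direction, which is immediate from the general machinery, and the ``if'' direction (the case $d=2$), which requires exhibiting an explicit chain.

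For the ``only if'' direction, suppose $\X$ is an irreducible transient Markov chain with $G(\X)=\Z^d$ for some $d \geq 3$. Since each vertex of $\Z^d$ has only finitely many in-neighbours (indeed $|\{y:P(y,x)>0\}| \leq 2d < \infty$), the chain admits a stationary measure by Harris's criterion as recalled in the introduction. Hence Theorem~\ref{c:main} applies: because $\Z^d$ is transient for SRW when $d \geq 3$, there is no irreducible transient $\X$ with $G(\X)=\Z^d$ crossing every edge with positive probability. (For $d=1$ one notes that $\Z$ is recurrent for SRW, but more directly any nearest-neighbour chain on $\Z$ that crosses every edge must be transient or recurrent in a way one can rule out — in fact a transient nearest-neighbour chain on $\Z$ drifts to $+\infty$ or $-\infty$ and so a.s.\ fails to cross all edges in one of the two ``tails''; this handles $d=1$.) So only $d=2$ can possibly work.

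For the ``if'' direction we must construct, or cite, a concrete transient nearest-neighbour Markov chain $\X$ on $\Z^2$ with $G(\X)=\Z^2$ that crosses every edge with positive probability; the abstract obstruction of Theorem~\ref{c:main} does not apply because $\Z^2$ is recurrent for SRW. The abstract's reference to ``a relevant example due to Gady Kozma'' signals that this is the content here. The natural strategy is to design the chain so that it has a strong outward radial drift (forcing transience) while retaining enough lateral/inward transition probability on every edge (forcing $G(\X)=\Z^2$ and positive crossing probability for each edge). Concretely, one assigns to the edge from $x$ to a neighbour $y$ a probability depending on $|y|-|x|$ and on $|x|$: outward-pointing edges get probability bounded below by a constant, while inward- and sideways-pointing edges get a small but strictly positive probability, say of order $1/|x|$ or $1/|x|^{1+\epsilon}$, tuned so that (a) the probabilities at each vertex sum to $1$, (b) the radial coordinate $|X_n|$ is a submartingale with drift bounded below, giving transience by a standard Lyapunov/Foster-type argument, and (c) irreducibility holds because from any vertex one can reach any neighbour in one step with positive probability, so all of $\Z^2$ is visited with positive probability and every edge is crossed with positive probability. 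One should double-check that the inward probabilities, though summable in $|x|$, are large enough not to be killed — but since we only need \emph{positive} probability of crossing each edge (not a.s.), even a single visit suffices and irreducibility already grants that.

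The main obstacle is the balancing act in the $d=2$ construction: one needs transience (which pushes toward a large, essentially one-directional drift) to coexist with $G(\X) = \Z^2$ and positive crossing probability on every single edge, including edges deep inside the lattice that the drift wants to avoid. The key realization that makes this possible — and that fails for $d \geq 3$ — is that in $\Z^2$ the SRW is recurrent, so a transient trace is not forbidden by Theorem~\ref{thm:1}; concretely, the construction can afford to make inward/sideways probabilities decay (ensuring transience of $\X$) while the two-dimensional geometry still lets the chain ``sweep'' enough of the lattice that every edge is crossable. I would present Kozma's explicit weights, verify transience via the radial submartingale computation, and note that irreducibility plus the nearest-neighbour structure immediately give $G(\X)=\Z^2$ and positivity of each edge-crossing probability, completing the proof.
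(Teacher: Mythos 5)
Your ``only if'' half is fine and matches the paper: for $d\ge 3$ the in-degrees in $\Z^d$ are finite, so a stationary measure exists by Harris's criterion and Theorem~\ref{c:main} applies, while for $d=1$ a transient nearest-neighbour chain eventually stays non-negative or non-positive and so a.s.\ misses the edges in one tail. The gap is in the $d=2$ direction, and it is not a matter of missing details: you have misread what must be proved. The theorem asks for a chain that \emph{with positive probability crosses all edges simultaneously} --- a single event involving infinitely many edges --- not that each individual edge is crossed with positive probability. Irreducibility (your step ``even a single visit suffices and irreducibility already grants that'') only gives the per-edge statement; the intersection over all edges of $\Z^2$ of these positive-probability events can easily have probability zero, and for your proposed chain it does. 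Indeed, to cross every edge between the $\ell^\infty$-spheres $S(k)$ and $S(k+1)$ the chain must make at least order $k$ transitions between these spheres, \emph{for every} $k$; a chain with a genuine outward drift and inward probabilities of order $1/|x|$ or $1/|x|^{1+\epsilon}$ makes only boundedly many (in expectation, summably few) such back-crossings at large radii, so a.s.\ it fails to cross all edges. This is exactly the tension the paper's remark about the chain being ``as recurrent as possible'' refers to, and it is why no soft Lyapunov/drift argument can work.

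The paper's actual construction (Kozma's example, \S\ref{s:proofof3}) is quantitative and rests on Theorem~\ref{thm:4}: one first builds a \emph{barely transient} birth-and-death chain, with weights $w(k,k+1)\asymp g_4(k)(\log_*^{(5)}k)^2$, which with positive probability crosses each radial edge $\{k,k+1\}$ at least $c\,g_2(k)=c\,k\log k\log\log k$ times (this is the technical heart, proved by induction via a Kesten--Kozlov--Spitzer-type branching argument). The $\Z^2$ chain then uses this as its radial component while sweeping counter-clockwise around each sphere $S(k)$ with probability $1-1/k^2$ (corners handled separately), so that at each transition from $S(k+1)$ to $S(k)$ any fixed connecting edge is crossed with probability $\gtrsim 1/k$. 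Given $\gtrsim k\log k\log\log k$ such transitions, the probability a fixed edge at level $k$ is missed is $\exp[-\Omega(\log k\,\log\log k)]$, which after a union bound over the $O(k)$ edges per level is summable in $k$; a Borel--Cantelli argument plus the Markov property then yields positive probability of crossing \emph{all} edges. You would need to either reproduce this construction (including Theorem~\ref{thm:4}) or find a genuinely different route to the simultaneous coverage statement; as written, your proposal does not prove the $d=2$ direction.
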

The case $d \ge 3$ follows from Theorem \ref{c:main}. The case $d=1$ follows from the simple observation that a transient Markov chain on $\Z$ will eventually remain from some moment on either non-negative, or non-positive. A similar argument works for any graph $G$ with more than one end. The example for the case $d=2$ is due to Gady Kozma. We are tremendously grateful to him for allowing us to present it.  At the heart of the example there is a family of auxiliary transient birth and death chains (also due to Kozma) which is in some asymptotic sense ``as recurrent as possible"  for a transient Markov chain.

Recall that a birth and death chain (on $\Z_+$) is a chain with state space $\Z_+$ such that $P(x,y)=0 $ if $|x-y|>0$ and $P(x,y)>0$ whenever $|x-y|=1$. We recall that by Kolmogorov's cycle condition such a chain is always reversible and thus can be represented as a network.  
\begin{maintheorem}
\label{thm:4}
For every transient birth and death chain (on $\Z_+ $)  a.s.\ $ \sum_{i \in \Z_+} \frac{1}{N(i,i+1)}= \infty$. Conversely, for every $s$ there exists a birth and death chain such that for some constant $c(s)>0$, with positive probability we have that $N(i,i+1) \ge c(s) g_s(i) $ for all $i \in \Z_+$, where
\begin{equation}
\label{e:logloglog}
\begin{split}
& g_s(i):=i \prod_{j=1}^{s}\log_*^{(j)}i,  \\ &  \log_*^{(j)}i:=\begin{cases}\log^{(j)}i, & \text{if }\log^{(j-1)}i \ge e \\
1, & \text{otherwise} \\
\end{cases}
\end{split}
\end{equation}
and $\log^{(j)}:= \log \circ \log^{(j-1)} $ is the $j$th iterated logarithm.
\end{maintheorem}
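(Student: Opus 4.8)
For the first assertion I would simply invoke Theorem~\ref{thm:1}(ii). A transient birth-and-death chain started at $o$ visits every state a.s.\ (otherwise it is eventually trapped in a finite interval, hence recurrent there), so $\mathrm{PATH}$ is all of $\Z_+$ and $(\mathrm{PATH},N)$ is a birth-and-death network on $\Z_+$ in which the edge $\{i,i+1\}$ has resistance $1/N(i,i+1)$. Such a network is recurrent iff its effective resistance to infinity is infinite, i.e.\ iff $\sum_i 1/N(i,i+1)=\infty$; so Theorem~\ref{thm:1}(ii) gives $\sum_i 1/N(i,i+1)=\infty$ a.s.

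For the converse I would first set up the relevant structure. Write the chain as a network on $\Z_+$ with conductances $c_i:=c_{i,i+1}$, resistances $r_i:=1/c_i$ and tail sums $R_i:=\sum_{j\geq i} r_j$; transience is exactly $R_i\downarrow 0$. Since the walk escapes to $+\infty$, its last crossing of $\{i,i+1\}$ is upward, so $N(i,i+1)=2U_i-1$ where $U_i$ is the number of upward crossings. An excursion decomposition at the vertices shows that $(U_i)_{i\geq 0}$ is a Markov chain of Galton--Watson-with-immigration type: given $U_i=u$, $U_{i+1}\deq\xi_0^{(i)}+\sum_{k=1}^{u-1}\xi_k^{(i)}$, where the $\xi_k^{(i)}$ ($k\geq1$) are i.i.d.\ ``offspring'' (crossings of $\{i+1,i+2\}$ in an above-$i$ excursion conditioned to return to $i$) and $\xi_0^{(i)}$ is an independent ``immigrant'' (the same count for the escaping excursion); moreover $\mathbb{E}_o[U_i]=R_i/r_i$, the offspring law is geometric with mean $m_i=r_iR_{i+2}/(r_{i+1}R_i)$, and the immigration mean $\lambda_i$ and variances are computable from Doob transforms of the sub-chain on $\{i,i+1,\dots\}$. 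The plan is then: pick the resistances so that $\mathbb{E}_o[U_i]$ has a prescribed order comfortably larger than $g_s(i)$, and show that with positive probability $U_i\geq c(s)g_s(i)$ for every $i$.

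For the choice of network, fix $k$ large (I expect $k=2$ to suffice) and solve $R_i/r_i=(1-R_{i+1}/R_i)^{-1}\asymp g_{s+k}(i)$: then $\log R_i\asymp-\sum_{j<i}g_{s+k}(j)^{-1}\asymp-\int^{i}\frac{\dd t}{t\log t\cdots\log^{(s+k)}t}=-\log^{(s+k+1)}i$, so I would take $R_i:=1/\log_*^{(s+k)}(i+a)$ for a suitable $a=a(s,k)$ (positive, strictly decreasing), $r_i:=R_i-R_{i+1}$, $c_i:=1/r_i$. This chain is transient ($\sum_i r_i=R_0<\infty$), has $\mathbb{E}_o[U_i]=\Theta(g_{s+k}(i))$, and is consistent with part (1) since $\sum_i 1/\mathbb{E}_o[U_i]\asymp\sum_i 1/g_{s+k}(i)=\infty$ (the borderline-divergent Bertrand series). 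For this profile a computation with the Doob transforms shows that the immigration mean $\lambda_i$ grows (like a product of iterated logarithms), so that, after the natural time change, the normalized process $W_i:=U_i/\mathbb{E}_o[U_i]$ behaves like a squared-Bessel / Cox--Ingersoll--Ross-type Markov chain of ``dimension'' $\delta_i\asymp(\lambda_i-m_i)/\Var(\xi^{(i)})\to\infty$: mean-reverting toward $1$, with conditional drift $-\kappa_i(W_i-1)$ and fluctuation scale both vanishing, yet $\sum_i\kappa_i=\infty$, and with summable occupation measure near $0$.

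The probabilistic heart — and the step I expect to be the main obstacle — is to convert this into the pointwise bound. Starting $W_i$ from a large value is possible with positive probability, since $U_0$ is geometric with unbounded support and over any finite initial window $(U_i)$ stays comparably large with positive probability; from there a Lyapunov / occupation-time estimate (using $\delta_i\to\infty$, so that $W$ is strongly repelled from $0$ and rarely dips below the slowly-decreasing relative threshold $g_s(i)/\mathbb{E}_o[U_i]=\Theta\big(1/(\log^{(s+1)}i\cdots\log^{(s+k)}i)\big)$) should yield $\mathbb{P}_o\big(U_i\geq c'(s)g_s(i)\ \forall i\big)>0$, hence $\mathbb{P}_o\big(N(i,i+1)\geq c(s)g_s(i)\ \forall i\big)>0$. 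The difficulty is twofold: (a) making the branching-with-immigration description of $(U_i)$ rigorous and estimating the offspring/immigration laws — hence $\delta_i$ — precisely enough for the chosen profile, these laws depending self-consistently on all of $(r_j)_{j>i}$; and (b) running the Lyapunov/occupation-time argument over the full infinite horizon, which requires exploiting the exact balance between the vanishing drift $\kappa_i$ and the vanishing fluctuation scale, a crude drift bound being insufficient since the mean reversion is only ``critical'' ($\kappa_i\to0$ yet $\sum_i\kappa_i=\infty$).
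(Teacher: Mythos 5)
The first half of your argument is fine and is exactly what the paper does: part (ii) of Theorem \ref{thm:1} (or \cite{traces}, by reversibility) applied to the network $(\mathrm{PATH},N)$, whose effective resistance to infinity is $\sum_i 1/N(i,i+1)$, and the observation that a transient birth and death chain a.s.\ visits every state. No issue there.

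The converse, however, is where the whole content of the theorem lies, and your proposal does not prove it: it is an outline whose decisive step is explicitly deferred. The reduction to the upward-crossing process $(U_i)$, its branching-with-immigration structure, and the choice $R_i\asymp 1/\log_*^{(s+k)}(i+a)$ are all reasonable (and the branching/KKS picture is indeed used in the paper), but the statement you actually need --- that with positive probability $U_i\ge c(s)g_s(i)$ \emph{simultaneously for all} $i$ --- is precisely the part you label ``the main obstacle'' and for which you offer only a heuristic (a CIR-type normalized process with ``dimension'' $\delta_i\to\infty$, to be controlled by an unspecified Lyapunov/occupation-time estimate). Nothing in the proposal quantifies the offspring and immigration laws (which depend self-consistently on the entire tail $(r_j)_{j>i}$), nor establishes that the probability of the process ever dropping below the moving threshold $g_s(i)$ is strictly less than $1$ over the infinite horizon; as you note yourself, the mean reversion is only critical ($\kappa_i\to 0$, $\sum_i\kappa_i=\infty$), so crude drift bounds fail, and this is exactly the difficulty the theorem is about. (A secondary point: whether your ``$k=2$'' margin of two iterated-log factors suffices is itself unclear --- the paper's construction effectively gives itself three --- though this is not fatal since the statement only requires some chain.) The paper avoids your infinite-horizon critical-drift problem entirely by a multiscale induction on $s$: it passes to the induced chain on the dyadic points $\{2^k\}$ (Definition \ref{def:induced}, Lemma \ref{lem:scales}), uses as base input the chain of \cite{JLP} with weights $\asymp k\log^2k$ (Proposition \ref{p::bd1}, proved by a first/second-moment comparison for local regeneration points plus Borel--Cantelli), and then, \emph{within a single dyadic block where all conductances are comparable}, runs the KKS-type branching-with-immigration argument (Lemma \ref{lem:KSSscales}) with near-critical geometric offspring of mean $2-C_0/n$, concluded by a Brownian approximation and Cameron--Martin; this yields a positive probability, uniform in $k$, that one round trip across the block produces order $2^k$ crossings of every edge in half the block. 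Crossings then multiply across scales and a Borel--Cantelli/Markov-property argument over blocks finishes the proof. In short: your plan replaces countably many constant-probability estimates at comparable-weight scales by a single delicate infinite-horizon estimate, and that estimate is missing.
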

The claim that for every transient birth and death chain \ $ \sum_{i \in \Z_+} \frac{1}{N(i,i+1)}= \infty$ a.s.\ follows from Theorem \ref{thm:1} (part (ii)) (or rather, from \cite{traces} as birth and death chains are always reversible). Indeed, for a birth and death chain with edge weights $(c(i,i+1):i \in \Z_{+}) $, the effective-resistance from $0$ to $\infty$ is $\sum_{i \ge 0}\sfrac{1}{c(i,i+1)} $. The sum diverges iff the chain is recurrent. 
\subsection{Comments}
\label{s:comments}
\begin{enumerate}
\item
As noted in \cite{traces} (see \eqref{e:concave} below) part (iii) of Theorem \ref{thm:1} implies part (ii). In turn, part (ii) of Theorem \ref{thm:1} implies part (i). To see this, first note that in terms of recurrence/transience of the random walk started at $o$, the network $(\mathrm{PATH},N)$ is equivalent to $(G(\X),N)$.  As SRW on  $\mathrm{PATH}$ is the random walk on  $(\mathrm{PATH}, \mathbf{1} )$,  Rayleigh's monotonicity principle (e.g.~\cite[Ch.\ 2]{lyons} or \cite{doyle}), together with the fact that $N \ge 1$ on the edges of $\mathrm{PATH} $, implies that if  $(\mathrm{PATH},N)$ is recurrent then so is SRW on  $\mathrm{PATH}$.
\item
Even for $G=\Z^3$ it is not hard to construct an irreducible Markov chain $\X$ with $G(\X)=\Z^3 $ which visits every vertex with positive probability. This is true for any graph which has a spanning tree which is isomorphic to the infinite path $(\N,\{\{z,z+1 \}:z \in \N  \})$.
\item
If we only consider Markov chains $\X$ such that $\sup_{y \in V}\pi(y)<\infty $, where $\pi$ is the stationary measure of $\X$, then one can prove the assertion of Theorem \ref{c:main} directly, without relying on Theorem \ref{thm:1}. Namely, by \eqref{e:N} in conjunction with part (i) of Lemma \ref{lem:v} we have that $\inf_{y \in V }\sum_{i=0}^{\infty}P^{i}(x,y)=0 $ for all fixed $x \in V $ (in the setup of Theorem \ref{c:main} under the additional assumption that  $\sup_{y \in V}\pi(y)<\infty $). Picking a sequence $\mathbf{y}:= (y_j)_{j=1}^{\infty}$ such that $\sum_{i=0}^{\infty}P^{i}(x,y_{j})<2^{-j} $ and applying the Borel-Cantelli Lemma we get that a.s.\ only finitely many vertices in the sequence $\mathbf{y}$ are visited. We believe that for the general case (when one allows $\sup_{y \in V}\pi(y)=\infty $) there does not exist an alternative simple proof to Theorem \ref{c:main}. Indeed, by Theorem \ref{thm:3} such an argument must fail for $\Z^2$, which suggests it has to be more involved. In fact, we do not have a direct (simpler) proof of Theorem \ref{c:main} (which does not rely on Theorem \ref{thm:1}),
even when further assuming $G$ is non-amenable with one end (e.g.\ a co-compact lattice in real hyperbolic spaces $\mathbb{H}^d$ or the product of a $d$-regular tree with $\Z$).
\item
For every graph $G$ the Doob's transform of the SRW corresponding to conditioning on returning to some fixed vertex is always recurrent and hence a.s.\ crosses all edges.
\item
Clearly every transient birth and death chain  with state space $\Z_+$ started from 0 crosses every edge. 
\item
By taking $\X$ to be the product of the birth and death chain from Theorem \ref{thm:4} (with any value of $s$)  with a complete graph of size $k$ we get a transient instant
with $G(\X)$ having minimal degree $k$ such that with positive probability $\X$ crosses every edge of $G(\X)$. In fact, one can consider a variant of the construction in which the degrees are not bounded. For $s=2$ from Theorem \ref{thm:4} it is not hard to verify that one can replace the vertex $k$ of the birth and death chain by a clique of size $\lceil \sqrt{k+1} \rceil $ for all $k$ (as described below) while keeping the property that with positive probability each edge is crossed at least once. More precisely,   for all $k \ge 1$ in the $k$th clique exactly one vertex is connected to a single vertex of the $(k+1)$th and  $(k-1)$th cliques with edge weights $w(k,k+1) $   and $w(k,k-1)$, respectively, where $w(\bullet,\bullet)$ are the edge weights from the case $s=2$). The  edge weights of edges between two vertices of the $k$-th clique can be taken to be $w(k,k+1)$.  
\end{enumerate}
\subsection*{Open problems}
\label{s:open}
\begin{problem}
Find a natural condition on a connected, locally finite, recurrent (for SRW) graph $G$  which ensures the existence of a transient Markov chain $\X$ with $G(\X)=G $ which with positive probability crosses each edge of $G$ at least once?
\end{problem}
One possible natural condition is that the graph is one ended. One can also consider concrete examples like the Uniform Infinite Planar Triangulation. 

Recall that transience is invariant under rough isometries (e.g.~\cite{lyons} Theorem 2.17).
\begin{problem}
Is the property of admitting a nearest neighbor Markov chain which with positive probability crosses all edges at least once  invariant under quasi-isometries?
\end{problem}
\begin{problem}
Show that there does not exist a transient nearest neighbor \textbf{reversible} Markov chain on $\Z^2$ whose transition probabilities are uniformly bounded from below, which with positive probability crosses all edges. See comments (2), (3) and (6) in \S\ref{s:conc} for details concerning some difficulties in constructing such example.
\end{problem}
Similarly, one can impose reversibility also to Problems 1.2-1.3. Moreover, one can impose also the condition that if $G(\X)=G=(V,E) $ then there exists some $c>0$ such that for every $x,y \in V $ such that $\{x,y\} \in E$ we have that $P(x,y) \ge c $. For more on this point see  comment (4) in \S\ref{s:conc}. 
\begin{problem}
Can one prove a quantitative version of Theorem \ref{thm:1} in the spirit of \cite[Theorem 10]{cut3}? 
\end{problem}
\begin{problem}
Is it the case that the trace of a transient branching Markov chain\footnote{A branching Markov chain evolves as follows: at each time unit each particle splits into a random number of offspring particles (according to some prescribed distribution) which all make one step independently of each other, chosen according to the transition kernel of the underlying Markov chain.} (not necessarily a reversible one) is recurrent w.r.t.\ branching simple random walk with an arbitrary offspring distribution whose mean is larger than 1.
\end{problem}
\begin{problem}
Consider the simple random walk snake. That is, SRW on a graph which is indexed
by a family tree of critical Galton-Watson tree conditioned to survive.
See e.g.\ \cite{BC}.  
Is the snake range  on any graph a.s.\ recurrent for the snake?
\end{problem}
\subsection{Related work}
\label{s:related}
The review of related works below is for the most part borrowed from \cite{traces}.
The assertion of Theorem \ref{thm:1} was first proved for SRW on the Euclidean lattice $\Z^d$, for which it  is known that for $d \ge 3$ the path performed by the walk
has infinitely many cut-times, a time when the past of the path is
disjoint from its future; see \cite{NP,cut,cut3}.
From this, recurrence
follows by the criterion of Nash-Williams \cite{NW} (see e.g.\ \cite[(2.14)]{lyons}). By contrast, James, Lyons and Peres
\cite{JLP}
constructed a transient birth and death chain which
a.s.\ has only finitely many cut-times. In fact, their example is the same as the one from Proposition \ref{p::bd1}, which is part of the proof of Theorem \ref{thm:4}.  Benjamini, Gurel-Gurevich and Schramm  \cite{cut3} constructed   a bounded degree graph satisfying that the trace of SRW on it a.s.\ has a finite number of cut-points. They also showed that  every transient Markov chain satisfies that the expected number of cut-times is infinite.   $ $

The case of Theorem \ref{thm:1} when $\mathbf{X}$ is reversible was proved by Benjamini, Gurel-Gurevich and  Lyons \cite{traces}.\footnote{Another proof for the case that $\X$ is a SRW on a locally finite graph can be found in \cite{BGG}.}\footnote{The proof in \cite{traces} contains a (minor) gap which was filled in \cite[Ch.\ 9]{lyons}. In fact, Lemma 9.23 in \cite{lyons}, used to fill the aforementioned gap will be crucial in our analysis.}  We follow their argument closely, adapting certain parts of it to the non-reversible setup.
We note that a priori it is not clear that their argument can be extended to the non-reversible setup because it relies on the connections between reversible Markov chains and electrical networks\footnote{While this connection was recently extended to the non-reversible setup by Bal\' azs and Folly \cite{net}, we did not manage to use their framework for the purpose of extending the argument from \cite{traces} to the non-reversible setup.} and also on the Dirichlet principle for reversible Markov chains (see \eqref{e:DP} below).

The connection between reversible Markov chains,  electrical networks and potential theory is classical (see e.g.\ \cite{doyle} or \cite[Ch.~2]{lyons} for a self-contained introduction to the topic). It was only in recent years that this connection was extended to the non-reversible setup in several extremely elegant works. The first progress on the non-reversible front was made by Doyle and Steiner \cite{doyle2} who derived an extremal characterization of the commute-time between two states, which shows that commute-times in the additive symmetrization (to be defined shortly) of an irreducible Markov chain cannot be smaller than the corresponding commute-times in the original chain.

Recently  Gaudilli\`ere and Landim \cite{nonrev} extended much of the classic potential theory to the non-reversible setup and derived several extremal characterizations for the capacity (see \eqref{e:cap} for a definition) between two disjoint sets. In particular, they showed  \cite[Lemma 2.5]{nonrev} that the capacity between two disjoint sets of an irreducible Markov chain having a stationary measure is at least as large as the corresponding capacity w.r.t.\ the additive symmetrization of the chain. In fact, they defined the capacity directly in probabilistic terms (see \eqref{e:cap} below) and established several useful properties for it. See \S\ref{s:cap} for further details. We utilize their framework and exploit their results in order to extend the result of \cite{traces} to the non-reversible setup.

\medskip

A result of similar spirit to that of \cite{traces} was proved by Morris \cite{morris}, who
showed that the components of the wired uniform spanning forest are a.s.\
recurrent. For an a.s.\ recurrence theorem for distributional limits
of finite  planar graphs, see \cite{BS}. In \cite{traces} it was conjectured that the trace of a transient branching (simple) random walk is recurrent for the same branching random walk. A positive answer was given by Benjamini and M\"uller \cite{BRW} in the case that the underlying graph is a Cayley graph (or more generally, a unimodular random graph). 

\subsection{Organization of this work}
In \S\ref{s:Markov} we present some relevant background concerning Markov chains and capacities. In \S\ref{s:overview} we present an overview of the proof of Theorem \ref{thm:1} and reduce it to the validity of Proposition \ref{p:main}. In \S\ref{s:aux} we construct an auxiliary chain and explain its connection with Proposition \ref{p:main}.  In \S\ref{s:proof} we conclude the proof of Theorem \ref{thm:1} by proving Proposition \ref{p:main}. In \S\ref{s:proofof4} and \ref{s:proofof3} we prove Theorems \ref{thm:4} and \ref{thm:3}, respectively. We conclude with some comments concerning the construction from the proof of Theorem \ref{thm:3}, and with a certain relaxation of the assumption that the chain admits a stationary measure.
\section{Some definitions related to Markov chains}
\label{s:Markov}

Consider an irreducible Markov chain  $\X=(X_k)_{k=0}^{\infty}$  on a countable (or finite) state space $V$ with a stationary  measure $\pi$ and transition probabilities given by $P$. We say that $P$ is \emph{reversible} if $\pi(x)P(x,y)=\pi(y)P(y,x)$ for all $x,y \in V$. Observe that a weighted nearest-neighbor random walk is reversible w.r.t.~the measure $\pi$ given by $\pi(v):=c_v$ and hence $\pi$ is stationary for the walk. Conversely, every reversible chain can be presented as a network with edge weights $c_{x,y}=\pi(x)P(x,y)$.
 The \textbf{time-reversal} of $\X$ (w.r.t.\ $\pi$), denoted by $\X^*=(X_k^*)_{k=0}^{\infty}$, is a Markov chain whose transition probabilities are given by $P^*(x,y):=\pi(y)P(y,x)/\pi(x)$. It also has $\pi$ as a stationary measure. Its \textbf{additive symmetrization} $\X^{\sss}=(X_k^{\sss})_{k=0}^{\infty}$ (w.r.t.\ $\pi$) is a reversible Markov chain whose transition probabilities are given by $S:= \half (P+P^*)$. The corresponding edge weights are given by
\begin{equation}
\label{e:cs}
c_{\sss}(x,y):= \half [\pi(x)P(x,y)+\pi(y)P(y,x)]=\half \pi(x)[P(x,y)+P^*(x,y)].
\end{equation}

Note that when the chain is recurrent, $\pi$ is unique up to a constant factor, and so $P_*$ and $S$ are uniquely defined. This may fail when the chain is transient! A transient Markov chain may have two different stationary measures which are not constant multiples of one another.  Consider a random walk on $\Z$ with $P(i,i+1)=p=1-P(i,i-1)$ for all $i \in \Z$, with $p \in ( 1/2,1) $. Note that $P$ is reversible w.r.t.\ $\pi(i)=(p/(1-p))^i $ and  also has the counting measure $\mu$ on $\Z$ as a stationary measure. The additive symmetrization w.r.t.\ $\pi$ is again $P$ while w.r.t.\ $\mu$ is SRW on $\Z$. We see that the additive symmetrization of a transient Markov chain can be recurrent (and that this may depend on the choice of the stationary measure w.r.t.\ which the additive symmetrization is taken). Conversely, Theorem \ref{thm:AG} below asserts that the additive symmetrization of a recurrent Markov chain is always recurrent.

Let $H=(V,E)$ be a graph and $c=(c(e))_{e \in E}$ some edge weights. The \emph{restriction} of the network $(H,c)$ to $A \subset V$, denoted by $(A,c \upharpoonleft A )$, is a network on $(A,E(A))$, where $E(A):=\{\{a,a' \}:a,a' \in A \} $ in which the edge weight of each $e \in E(A)$ equals $c(e)$. 

The \emph{hitting time} of a set $A \subset V$ is $T_A:=\inf \{t \ge 0 :X_t \in A \}$. Similarly, let $T_A^+:=\inf \{t \ge 1 : X_t \in A \} $. When $A=\{x\}$ is a singleton, we write $T_x$ and $T_x^+$ instead of $T_{\{x\}}$ and $T_{\{x\}}^+$. We denote by  $\Pr_a$  the law of the entire chain, started from state $a $.

\medskip

 The $\ell_2$ norm of $f \in \R^V  $ is given by $\|f \|_2:=\sqrt{\langle f,f \rangle_{\pi}} $, where   $\langle f,g \rangle_{\pi}:= \sum_{v \in V}\pi(v) f(v) g(v)$. The space of $\ell_2$ functions is given by $\mathcal{H}:=L_2(V,\pi)=\{f :\R^V:\|f \|_2< \infty \}$.  Then $P$ (and similarly, also $S$ and  $P^*$)  defines a linear operator on $\mathcal{H} $ via $Pf(x):= \sum_{y \in V} P(x,y)f(y)=\mathbb{E}_x[f(X_1)]$. Note that $P^*$ is the dual of $P$ and that $S$ is self-adjoint. The \emph{Dirichlet form} $\mathcal{E}_P(\cdot,\cdot):\mathcal{H}^2 \to \mathbb{R}$ corresponding to $P$    is  $\mathcal{E}_{P}(f,g):=\langle (I-P)f,g \rangle_{\pi}$ (where $I$ is the identity operator). Note that for all $f \in \mathcal{H}$ \begin{equation}
\label{e:Dirichlet}
\mathcal{E}_{P}(f,f)=\mathcal{E}_{P^*}(f,f)=\mathcal{E}_{S}(f,f)=\half  \sum_{x,y}\pi(x)S(x,y)(f(x)-f(y))^2. \end{equation}
Even when $f \notin \mathcal{H} $ we define $\mathcal{E}_{P}(f,f):= \half  \sum_{x,y}\pi(x)P(x,y)(f(x)-f(y))^2 $. With this definition  \eqref{e:Dirichlet} remains true. The quantity $\mathcal{E}_{P}(f,f) $ is called the \textbf{Dirichlet energy} of $f$.
\subsection{Capacity, effective-resistance and voltage}
\label{s:cap}
The \textbf{voltage} function corresponding to the set $A$ (resp.\ to $(A,B)$) is $v_A(x):=\Pr_x[T_A< \infty ] $ (resp.\ $v_{A,B}(x):=\Pr_x[T_A< T_B ]$). When $A=\{y\}$ we write $v_y $ rather than $v_{\{y\}} $.  When $y=o$ we simply write $v$ instead of $v_o$. We denote the voltages corresponding to the time-reversal and additive symmetrization by $v^*$ and $v^{\mathrm{s}}$, respectively.

Let $G=(V,E)$ be a graph. A \textbf{flow} $\theta$ between $A$ and infinity (resp.\ $B$) is a real-valued antisymmetric function on directed edges (i.e.\ $\theta(x,y)=-\theta(y,x)$ and  $\theta(x,y)=0 $ if $\{x,y\} \notin E$)  such that $\sum_{y}\theta(x,y)=0 $ for all $x \notin A$ (resp.\ $x \notin A \cup B$). The \emph{strength} of $\theta$ is defined as $\sum_{a \in A,y \in V }\theta(a,y) $. We say that a flow is a \emph{unit flow} if it has strength 1.

Under reversibility one defines the \textbf{effective-resistance} between a finite set $A$ and infinity (resp.\ $B$) $\mathcal{R}_{A \leftrightarrow \infty } $ (resp.\ $\mathcal{R}_{A \leftrightarrow B }$) as the minimum energy $ \half \sum_{x \neq y}\frac{\theta(x,y)^{2}}{c(x,y)} $ of a unit flow $\theta$ from $A$ to infinity (resp.\ $B$). The flow attaining the minimum is called the unit \emph{current flow} and is given by $i_{A}(x,y):=\frac{c(x,y)}{\con (A)}(v_{A}(x)-v_{A}(y)) $ (resp.\ $i_{A,B}(x,y):=\frac{c(x,y)}{\con (A,B)}(v_{A,B}(x)-v_{A,B}(y)) $), where $\con (A)$ and $\con (A,B)$ are normalizing constants, ensuring that $i_{A}$ and $i_{A,B}$ have strength 1, given by \eqref{e:cap} below (e.g.\ \cite[Ch.\ 2]{lyons}) and  $c(x,y)=\pi(x)P(x,y)$. The reciprocal of $\mathcal{R}_{A \leftrightarrow \infty }$ (resp.\ $\mathcal{R}_{A \leftrightarrow B }$) is called the \textbf{effective-conductance} between $A$ and infinity (resp.\ $B$). Even without reversibility, one defines
the \textbf{capacity} of a set $A$ (resp.~between $A$ and $B$) w.r.t.\ $\pi$ (a stationary measure) as
\begin{equation}
\label{e:cap}
\begin{split}
&\mathrm{Cap}(A):= \sup_{C \subseteq A:\, C \text{ is finite}} \con (C), \\& \text{where for a finite set }C \text{ we define} \quad \con (C):=\sum_{a \in C}\pi(a) \Pr_a[T_{C}^{+}=\infty],  \\ & \mathrm{Cap}(A,B):= \sum_{a \in A}\pi(a) \Pr_a[T_{A}^{+}>T_{B}].
\end{split}
\end{equation}
Under reversibility (for the standard choice $\pi(v):=c_{v}$) the capacity is simply the effective-conductance (e.g.\ \cite[Ch.\ 2]{lyons}). Note that it need not be the case that for an infinite $A$ we have that $\con (A):=\sum_{a \in A}\pi(a) \Pr_a[T_{A}^{+}=\infty]$ (e.g.\ consider SRW on $\Z^3$ and $A=\{(x_1,x_2,x_3) \in \Z^3:x_3=0 \}$). Conversely, in the definition of $\mathrm{Cap}(A,B)$ we assume neither $A$ nor $B$ to be finite. 

It is clear from the definition of the capacity that
an irreducible Markov chain on a countable state space admitting a stationary measure is transient iff $\mathrm{Cap}(x):=\mathrm{Cap}(\{x\})>0$ for some (equivalently, for all) $x \in V$. The following claim will be used repeatedly in what comes. Multiplying all of the edge weights of a network by a constant factor of $M $ has no effect on the walk, and so one can readily see from \eqref{s:cap} that this operation changes $\con (\bullet ) $ by a multiplicative factor of $M$. In particular, if $(G,c)$ and $(G,c')$ are such that $c'(e) \le Mc(e)$  for all $e$ and the network $(G,c)$ is recurrent, then the network $(G,c)$ is also recurrent. Indeed after multiplying all of the edge weights of $c$ by a factor of $M$ we get new weights $\tilde c :=Mc$ with  $c'(e) \le \tilde c(e)$ for all $e$  and so we can apply Rayleigh's monotonicity principle.
Under reversibility,  the \emph{Dirichlet principle} asserts that for every finite set $A$ and all $B$ we have that
\begin{equation}
\label{e:DP}
\begin{split}
& \mathrm{Cap}(A)= \inf_{f \text{ of finite support} :\, f \upharpoonright A \equiv 1 } \mathcal{E}_{P}(f,f) \quad \text{and} \\ &  \mathrm{Cap}(A,B)= \inf_{f:  \text{ the  support of $f$ is finite and contained in }B^{c}, \, f \upharpoonright A \equiv 1\,  } \mathcal{E}_{P}(f,f),
\end{split}
\end{equation}
where $f \upharpoonright A$ is the restriction of $f$ to $A$, $B^{c}:=V \setminus B $ and $\mathcal{E}_{P}(f,f)$ is as in \eqref{e:Dirichlet}. When considering a reversible Markov chain, if we want to emphasize the identity of the edge weights $c$ w.r.t.\ which the capacity is taken we write $\mathrm{Cap}(A;c) $ and $\mathrm{Cap}(A,B;c)$. We denote the capacity of a set $A$ in the restriction of the network to some set $D$ by $\mathrm{Cap}(A;c \upharpoonleft D ) $. 

Using the notation from Theorem \ref{thm:1}, as in \cite{traces} the following consequence of the Dirichlet principle (which we explain below) will be crucial in what comes:
\begin{equation}
\label{e:concave}
\forall \, A \subset V, o \in V \qquad  \mathbb{E}_o[\mathrm{Cap}(A;N)] \le \mathrm{Cap}(A; \mathbb{E}_o [N]) .
\end{equation}
Above $N$ is defined via the Markov chain which need not be reversible. Regardless of this, setting the edge weights to be $N$ or $\mathbb{E}_o [N]$ yields a network. Indeed,
\[2\mathbb{E}_o[\mathrm{Cap}(A;N)] =\mathbb{E}_o[\inf_{f \text{ of finite support} :\, f \upharpoonright A \equiv 1}\sum_{x,y \in V}N(x,y)(f(x)-f(y))^{2}] \] \[ \le \inf_{f \text{ of finite support} :\, f \upharpoonright A \equiv 1} \mathbb{E}_o[\sum_{x,y \in V}N(x,y)(f(x)-f(y))^{2}]=2 \mathrm{Cap}(A; \mathbb{E}_o [N]).   \]

We denote the corresponding capacities w.r.t.~the time-reversal and the additive symmetrization by $\mathrm{Cap}_* $ and $\mathrm{Cap}_{\mathrm{s}} $, respectively. Gaudilli\`ere and Landim \cite[Lemmata 2.2, 2.3 and 2.5]{nonrev} established the following useful properties of the capacity: For every $A \subseteq A',B \subseteq B' \subseteq V$ and every nested sequence of finite sets $V_1 \subseteq V_2 \subseteq \cdots$ which exhausts $V$ (i.e.\ $V=\cup_{n \in \N }V_n$) we have that
\begin{equation}
\label{e:cap2}
\mathrm{Cap}(A)=\mathrm{Cap}_{*}(A) \quad \text{and} \quad \mathrm{Cap}(A,B)= \mathrm{Cap}(B,A)=\mathrm{Cap}_*(A,B) .
\end{equation}
\begin{equation}
\label{e:cap3}
\mathrm{Cap}(A) \le \mathrm{Cap}(A') \quad \text{and} \quad \mathrm{Cap}(A,B) \le \mathrm{Cap}(A',B') .
\end{equation}
\begin{equation}
\label{e:cap4}
\begin{split}
& \mathrm{Cap}(A) =\lim_{n \to \infty} \lim_{m \to \infty} \mathrm{Cap}(A \cap V_n ,V_m^c) \quad \text{and} \\ & \mathrm{Cap}(A,B) = \lim_{n \to \infty}  \lim_{m \to \infty} \mathrm{Cap}(A \cap V_n ,B \cup V_m^c ).
 \end{split}
\end{equation}
\begin{equation}
\label{e:cap5}
\mathrm{Cap}_{\mathrm{s}}(A) \le \mathrm{Cap}(A) \quad \text{and} \quad \mathrm{Cap}_{\mathrm{s}}(A,B) \le \mathrm{Cap}(A,B).
\end{equation}
Note that by \eqref{e:cap3} we have that $\mathrm{Cap}(A) =\lim_{n \to \infty} \mathrm{Cap}(A \cap V_n)$ (i.e.\ the supremum in the definition of  $\mathrm{Cap}(A)$ can be replaced by a limit along an arbitrary exhausting sequence).
\begin{remark}
To be precise, the relations $\mathrm{Cap}(A)=\mathrm{Cap}_{*}(A)$ and  $\mathrm{Cap}(A) \le \mathrm{Cap}(A')$ does not appear explicitly in \cite{nonrev}. They follow easily from the other relations above, which do appear in \cite[Lemmata 2.2, 2.3 and 2.5]{nonrev}. To see this, first observe that it suffices to consider the case that $A$ and $A'$ are finite. Then $\mathrm{Cap}(A)=\lim_{m \to \infty} \mathrm{Cap}(A,V_m^c)=\lim_{m \to \infty} \mathrm{Cap}_{*}(A,V_m^c)=\mathrm{Cap}_{*}(A) $ for a finite $A$. Similarly, if $A \subseteq A'$ are finite then  $\mathrm{Cap}(A)=\lim_{m \to \infty} \mathrm{Cap}(A,V_m^c) \le \lim_{m \to \infty} \mathrm{Cap}(A',V_m^c)=\mathrm{Cap}(A') $.
\end{remark}
\begin{atheorem}[\cite{nonrev} Lemma 5.1]
\label{thm:AG}
Consider a recurrent irreducible Markov chain  $\mathbf{X}$  on a countable state space. Then its additive symmetrization is also recurrent.
\end{atheorem}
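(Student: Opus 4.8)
The plan is to reduce recurrence of the additive symmetrization to the capacity criterion for transience recalled in \S\ref{s:cap}, and then invoke the Gaudillière--Landim comparison \eqref{e:cap5} between the capacities of $\mathbf{X}$ and of $\mathbf{X}^{\sss}$.

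First I would check that the statement is well posed and that the hypotheses of the capacity criterion hold for the symmetrization. Since $\mathbf{X}$ is recurrent, its stationary measure $\pi$ is unique up to a multiplicative constant, so $\mathbf{X}^{\sss}$, with kernel $S=\tfrac12(P+P^*)$, is unambiguously defined. One has $S(x,y)>0$ iff $P(x,y)+P(y,x)>0$, so the transition graph of $S$ is exactly $G(\mathbf{X})$; irreducibility of $P$ makes $G(\mathbf{X})$ connected, hence $S$ is irreducible. Moreover $\pi S=\tfrac12(\pi P+\pi P^*)=\pi$, so $\pi$ is a stationary measure for $S$ as well (indeed $\pi$ is the vertex-weight measure $c_{\sss,v}:=\sum_w c_\sss(v,w)$ of the network defined by \eqref{e:cs}). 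Thus the criterion ``an irreducible chain admitting a stationary measure is transient iff $\mathrm{Cap}(x)>0$'' applies to $\mathbf{X}^{\sss}$, and it suffices to exhibit some $o\in V$ with $\mathrm{Cap}_{\mathrm{s}}(o)=0$.

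Next, since $\mathbf{X}$ is recurrent and admits a stationary measure, the same criterion gives $\mathrm{Cap}(o)=0$ for a fixed reference state $o$. I would then fix an exhausting sequence $V_1\subseteq V_2\subseteq\cdots$ of finite sets with $\bigcup_n V_n=V$ and combine \eqref{e:cap4} (applied both to $S$ and to $P$) with the set-pair comparison in \eqref{e:cap5}:
\[ \mathrm{Cap}_{\mathrm{s}}(o)=\lim_{m\to\infty}\mathrm{Cap}_{\mathrm{s}}(\{o\},V_m^c)\;\leq\;\lim_{m\to\infty}\mathrm{Cap}(\{o\},V_m^c)=\mathrm{Cap}(o)=0. \]
Hence $\mathrm{Cap}_{\mathrm{s}}(o)=0$, so $\mathbf{X}^{\sss}$ is recurrent. (Alternatively one can quote the singleton form $\mathrm{Cap}_{\mathrm{s}}(o)\leq\mathrm{Cap}(o)$ of \eqref{e:cap5} directly, which is derived from the set-pair form exactly as in the Remark following \eqref{e:cap5}.)

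There is no serious obstacle once \eqref{e:cap5} is granted: the only points needing care are the bookkeeping ones above --- that $\mathbf{X}^{\sss}$ remains irreducible and carries $\pi$ as a stationary measure, and that the same exhaustion is used on both sides so that \eqref{e:cap4} applies simultaneously to $P$ and to $S$. The genuinely substantive input --- that capacities can only decrease under additive symmetrization --- is precisely the Gaudillière--Landim inequality \eqref{e:cap5} (their Lemma~2.5), which we take as given; reproving it would be the real work and lies outside what we need here.
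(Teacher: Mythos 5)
Your proposal is correct: reducing the claim to the vanishing of the singleton capacity ($\mathbf{X}$ recurrent $\Rightarrow \mathrm{Cap}(o)=\pi(o)\Pr_o[T_o^+=\infty]=0$), checking that $\pi$ is stationary for $S$ and that $S$ is irreducible, and then invoking the Gaudilli\`ere--Landim comparison \eqref{e:cap5} to get $\mathrm{Cap}_{\mathrm{s}}(o)\le\mathrm{Cap}(o)=0$ is exactly the intended argument. Note that the paper itself gives no proof of Theorem \ref{thm:AG} --- it is quoted from \cite{nonrev} (Lemma 5.1), whose own derivation proceeds along the same capacity-comparison lines --- so there is nothing further to compare; your bookkeeping (uniqueness of $\pi$ under recurrence, $\pi S=\pi$, applicability of \eqref{e:cap4} to both $P$ and $S$) covers the only points needing care.
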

As demonstrated earlier, the additive symmetrization of a transient chain w.r.t a certain stationary measure $\pi$ can be recurrent (and this may depend on the choice of $\pi$).
\begin{remark}
Since the expected number of returns to some fixed starting point is the same for $\X $ and its time-reversal $\X^*$, it follows that $\X$ is recurrent iff $\X^*$ is recurrent. This of course follows also from \eqref{e:cap2}.
\end{remark}

The following proposition is well-known in the reversible setup. Although the proof of the general case is essentially identical, we include it for the sake of completeness.
\begin{proposition}
\label{p:Denergyofv}
For every finite set $A$
\[\mathcal{E}_{S}(v_{A},v_A) \le \mathrm{Cap}(A). \]
\end{proposition}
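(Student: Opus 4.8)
The plan is to reduce the claim to the reversible Dirichlet principle \eqref{e:DP} applied to the additive symmetrization $S$, using the identity $\mathcal{E}_S(v_A,v_A) = \mathcal{E}_P(v_A,v_A)$ from \eqref{e:Dirichlet}. First I would recall that the voltage $v_A(x) = \Pr_x[T_A < \infty]$ is $P$-harmonic off $A$, i.e.\ $Pv_A(x) = v_A(x)$ for $x \notin A$, which follows from conditioning on the first step and the Markov property. On $A$ we have $v_A \equiv 1$. The natural quantity to compute is $\langle (I-P)v_A, v_A\rangle_\pi = \sum_x \pi(x)(v_A(x) - Pv_A(x))v_A(x)$; since $v_A - Pv_A$ vanishes off $A$ and $v_A \equiv 1$ on $A$, this collapses to $\sum_{a \in A}\pi(a)(v_A(a) - Pv_A(a)) = \sum_{a\in A}\pi(a)\Pr_a[T_A^+ = \infty] = \con(A)$ when $A$ is finite (here one uses $v_A(a) = 1$ and $Pv_A(a) = \Pr_a[T_A < \infty \text{ after one step}] = \Pr_a[T_A^+ < \infty]$). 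So morally $\mathcal{E}_P(v_A, v_A) = \con(A) = \mathrm{Cap}(A)$ — but one must be careful because this manipulation requires $v_A \in \mathcal{H}$ (so that the bilinear form is legitimate and the sum rearrangements are valid), and that is exactly the transient-case subtlety: $v_A$ need not be in $L_2(V,\pi)$.

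To handle the integrability issue cleanly, I would instead argue by truncation/exhaustion. Fix a nested exhaustion $V_1 \subseteq V_2 \subseteq \cdots$ of $V$ by finite sets with $A \subseteq V_1$. Let $v_A^{(n)}(x) := \Pr_x[T_A < T_{V_n^c}]$ be the voltage for the chain killed on exiting $V_n$; this is a finitely supported function (supported in $V_n$), equals $1$ on $A$, and is $P$-harmonic on $V_n \setminus A$. By the reversible Dirichlet principle \eqref{e:DP} applied to the symmetrization $S$, or rather by the standard fact that the harmonic function minimizes energy among functions with the prescribed boundary values, $\mathcal{E}_S(v_A^{(n)}, v_A^{(n)}) = \mathrm{Cap}_{\mathrm{s}}(A, V_n^c)$. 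Now by \eqref{e:cap5}, $\mathrm{Cap}_{\mathrm{s}}(A, V_n^c) \leq \mathrm{Cap}(A, V_n^c)$, and by \eqref{e:cap4} (together with the monotonicity noted after \eqref{e:cap5}, taking $B = \emptyset$ so that $\mathrm{Cap}(A, V_n^c) \to \mathrm{Cap}(A)$), we get $\mathcal{E}_S(v_A^{(n)}, v_A^{(n)}) \leq \mathrm{Cap}(A)$ for every $n$. It remains to pass to the limit: show $v_A^{(n)} \uparrow v_A$ pointwise (immediate, since $\{T_A < T_{V_n^c}\} \uparrow \{T_A < \infty\}$ by irreducibility and the fact that $T_{V_n^c} \to \infty$ a.s.) and that $\mathcal{E}_S(v_A, v_A) \leq \liminf_n \mathcal{E}_S(v_A^{(n)}, v_A^{(n)})$. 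The latter follows from Fatou's lemma applied to the nonnegative-summand expression $\mathcal{E}_S(f,f) = \tfrac12 \sum_{x,y}\pi(x)S(x,y)(f(x)-f(y))^2$ from \eqref{e:Dirichlet} — but one needs that $v_A^{(n)}(x) - v_A^{(n)}(y) \to v_A(x) - v_A(y)$ for each edge, which is the established pointwise convergence. Combining, $\mathcal{E}_S(v_A, v_A) \leq \mathrm{Cap}(A)$.

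The main obstacle is the lower-semicontinuity / limit step: one must be sure that the truncated voltages $v_A^{(n)}$ genuinely minimize the $S$-energy among finitely supported competitors with boundary value $1$ on $A$ — this is where the symmetry of $S$ (not of $P$) is used and where \eqref{e:DP} or its proof enters — and then that Fatou upgrades the uniform bound $\mathcal{E}_S(v_A^{(n)}, v_A^{(n)}) \leq \mathrm{Cap}(A)$ to a bound on $\mathcal{E}_S(v_A, v_A)$ itself. Everything else (harmonicity of $v_A$ off $A$, the pointwise monotone convergence $v_A^{(n)} \uparrow v_A$, and the chain of capacity inequalities \eqref{e:cap4}–\eqref{e:cap5}) is routine. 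I do not expect to need reversibility of $P$ anywhere — only that $S$ is self-adjoint and that $\mathcal{E}_S(f,f) = \mathcal{E}_P(f,f)$, both recorded in \S\ref{s:Markov}.
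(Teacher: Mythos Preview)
Your overall strategy --- exhaust $V$ by finite sets, compute the energy of the truncated voltages $v_A^{(n)} = v_{A, V_n^c}$, and pass to the limit via Fatou --- is exactly the paper's approach. But there is a genuine slip in your second paragraph: you assert that $\mathcal{E}_S(v_A^{(n)}, v_A^{(n)}) = \mathrm{Cap}_{\mathrm{s}}(A, V_n^c)$ by the Dirichlet principle for $S$, on the grounds that $v_A^{(n)}$ minimizes the $S$-energy. This is false. The function $v_A^{(n)}$ is $P$-harmonic on $V_n \setminus A$, not $S$-harmonic; the $S$-energy minimizer with those boundary values is the $S$-voltage $v^{\mathrm{s}}_{A, V_n^c}$, which in general differs from $v_A^{(n)}$ whenever $P \ne P^*$. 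The Dirichlet principle therefore only gives $\mathcal{E}_S(v_A^{(n)}, v_A^{(n)}) \ge \mathrm{Cap}_{\mathrm{s}}(A, V_n^c)$, which points the wrong way for your chain of inequalities.

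The fix is already implicit in your first paragraph. The computation you sketched for $v_A$ works cleanly for the truncated $v_A^{(n)}$ (finitely supported, so no integrability issue): $\mathcal{E}_P(v_A^{(n)}, v_A^{(n)}) = \langle (I-P)v_A^{(n)}, v_A^{(n)}\rangle_\pi = \sum_{a \in A}\pi(a)\Pr_a[T_A^+ > T_{V_n^c}] = \mathrm{Cap}(A, V_n^c)$ directly --- this is precisely the paper's \eqref{e:vcd}. Since $\mathcal{E}_S = \mathcal{E}_P$ by \eqref{e:Dirichlet}, you get $\mathcal{E}_S(v_A^{(n)}, v_A^{(n)}) = \mathrm{Cap}(A, V_n^c)$ with equality, bypassing $\mathrm{Cap}_{\mathrm{s}}$ and \eqref{e:cap5} entirely. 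Your remaining steps (pointwise convergence $v_A^{(n)} \to v_A$, Fatou, and \eqref{e:cap4}) then match the paper verbatim. So the detour through the symmetrized capacity is both incorrect and unnecessary.
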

\begin{proof}
We first note that for every $C,D \subset V$ such that $D^{c} $ is finite we have that\footnote{For the first equality in \eqref{e:vcd} note that (1) $ (I-P)v_{C,D} \equiv 0 $ on $F:=(C \cup D)^c$ since $v_{C,D}$ is harmonic on $F$, (2) that $v_{C,D} \equiv 0 $ on $D$ by definition and (3) that $v_{C,D} \upharpoonright C \equiv 1 $, while $(I-P)v_{C,D}(x)=\Pr_x[T_{C}^+>T_{D}] $ for all $x \in C$.} \cite[(2.6)]{nonrev} \begin{equation}
\label{e:vcd}
\mathrm{Cap}(C,D)=\langle (I-P)v_{C,D},v_{C,D} \rangle_{\pi}=\mathcal{E}_{P}(v_{C,D},v_{C,D}). \end{equation}
Let $A \subset V_1 \subset V_2 \subset \cdots $ be a nested sequence of finite sets which exhausts $V$. For every $n$ let $v_n:=v_{A,V \setminus V_n}$. Clearly $\lim_{n \to \infty}v_n(x)=v_A(x)$ for all $x \in V$. Then $\theta_n(x,y):=(v_n(x)-v_n(y))^2 \to \theta(x,y):=(v_A(x)-v_A(y))^{2} $ as $n \to \infty$ for all $x,y$.  By Fatou's Lemma. \[\mathcal{E}_{S}(v_{A},v_A) =\half \sum_{x,y} \pi(x)S(x,y)\liminf_{n \to \infty} \theta_n(x,y) \le \liminf_{n \to \infty}  \sfrac{1}{2} \sum_{x,y} \pi(x)S(x,y)\theta_n(x,y) \] \[ =\liminf_{n \to \infty}\mathcal{E}_{S}(v_{n},v_n)=  \liminf_{n \to \infty}\mathrm{Cap}(A,V \setminus V_n)=\con (A),  \]
where the penultimate equality uses \eqref{e:vcd} and the last equality uses \eqref{e:cap4}.
\end{proof}
\begin{remark}
Under reversibility, for every finite set $A$  one has that $\mathcal{E}_{S}(v_{A},v_A) = \mathrm{Cap}(A) $.
\end{remark}
The \emph{Green function} $\mathcal{G}:V^{2} \to \R_+ $ is given by $\mathcal{G}(x,y):=\sum_{n \in \Z_+}P^n(x,y) $. We have that
\begin{equation}
\label{e:green}
\begin{split}
& \mathcal{G}(x,y)= \frac{\pi(y)}{\pi(x)} \sum_{n \in \Z_+}(P^{*})^n(y,x)=\frac{\pi(y)}{\pi(x)} \mathcal{G}_{*}(y,x) \\ &=\frac{\pi(y)v_{x}^*(y)}{\pi(x)}\mathcal{G}_{*}(x,x)=\frac{\pi(y)v_{x}^*(y)}{\pi(x)}\mathcal{G}(x,x)
\end{split}
\end{equation}
(where $\mathcal{G}_{*}$ is the Green function of the time-reversal). Let
\begin{equation}
\label{e:alpha}
\alpha =\frac{1}{\mathrm{Cap}(o)} =\mathcal{G}(o,o)/ \pi (o)=\mathcal{G}_{*}(o,o)/ \pi (o).
\end{equation}
Observe that by \eqref{e:green}-\eqref{e:alpha}  (using the notation from \eqref{e:cs})
\begin{equation}
\label{e:N}
\begin{split}
& \mathbb{E}_o[N(x,y)] := \mathcal{G}(o,x)P(x,y)+\mathcal{G}(o,y)P(y,x) \\ & =\alpha [ \pi(x)P(x,y)v^*(x)+ \pi(y)P(y,x)v^*(y) ] \le 2 \alpha c_{\sss}(x,y) \max \{ v^*(x),v^*(y) \}.
\end{split}
\end{equation}
In what comes \eqref{e:N} will be crucial in comparing the networks $(G(\X),\mathbb{E}_o[N])$ and $(G(\X), c_{\sss})$. For further details on this point see \eqref{e:motivation} and the paragraph following it.

\section{An overview of the proof of Theorem \ref{thm:1}}
\label{s:overview}
Following \cite{traces} the following sets shall play a major role in what comes
\begin{equation}
\label{Adel}
\begin{split}
& A_{\delta}:=\{a:v(a) \ge \delta \}, \quad V_{\delta}:=\{a:v(a) < \delta \},
\\ & A_{\delta}^*:=\{a:v^{*}(a) \ge \delta \}, \quad V_{\delta}^*:=\{a:v^{*}(a) < \delta \}.
\end{split}
\end{equation}
When certain claims are true by symmetry for both $A_{\delta} $ and $A_{\delta}^* $, we sometimes phrase them in terms of $A_{\delta}$. However as \eqref{e:N} suggests, the sets  $A_{\delta}^* $ are more relevant for us than the sets  $A_{\delta}$.
In fact, for our purposes we need not consider  $A_{\delta} $ and $V_{\delta}$ at all. We find it counter-intuitive at first sight that in order to study the trace of $\X$ one has to investigate its behavior w.r.t.\ $v^*$ and the sets $A_{\delta}^* $ (rather than w.r.t.\ $v$ and $A_{\delta}$).  
\begin{lemma}
\label{lem:connected}
Let $\X$ be a transient irreducible Markov chain on a countable state space. Then in the above notation we have that for every $u \in V$ there exists a path $u_0=u,u_1,\ldots, u_{\ell}=o$ in $G(\X)$ such that for all $1 \le i \le \ell$ we have that $P(u_{i-1},u_{i})>0$ and $v(u) \le v(u_{i})$. Thus the restriction of the graph $G(\X)$ to $A_{\delta} $ is connected. The same holds for $A_{\delta}^* $ with $P^*$ in the role of $P$.
\end{lemma}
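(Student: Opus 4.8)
The plan is to build the path greedily from $u$ towards $o$, using transience to guarantee that the construction terminates at $o$. Fix $u \in V$ and suppose for contradiction that there is no path $u_0 = u, u_1, \ldots, u_\ell = o$ in $G(\X)$ with $P(u_{i-1},u_i) > 0$ and $v(u) \le v(u_i)$ for all $i$. Let $W$ be the set of vertices reachable from $u$ by a directed path $u = w_0, w_1, \ldots, w_m$ with $P(w_{j-1},w_j) > 0$ for all $j$ and $v(w_j) \ge v(u)$ for all $j$ (so $u \in W$). By assumption $o \notin W$.

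The key observation is that $W$ is a ``closed'' set for the chain started inside $W$ at any vertex $w$ with $v(w) = v(u)$, in the following sense. Actually the cleaner route is this: consider the chain started from $u$. I claim $\Pr_u[T_o < \infty] = 0$, contradicting irreducibility together with $v(u) = \Pr_u[T_o < \infty] > 0$ (recall $v = v_o$ and irreducibility forces $v(u) > 0$ for every $u$; this uses transience only to ensure $v$ is well-defined as a nontrivial quantity, but in fact $v(u) > 0$ holds for any irreducible chain). To prove the claim, I would use the martingale/harmonicity structure of $v$: the function $v$ is $P$-harmonic on $V \setminus \{o\}$ and equals $1$ at $o$, and more importantly $v(X_n)\Ind{n < T_o}$ behaves well. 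The precise tool is that, starting from $u$, on the event $\{T_o < \infty\}$ the walk must at some point take a step from a vertex $x$ with $v(x) \ge v(u)$ to a vertex $y$ — wait, this needs care.

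Let me restructure. The cleanest argument: Let $m := v(u) > 0$ and set $B := \{a : v(a) \ge m\} = A_m$ in the notation of \eqref{Adel}, so $u, o \in B$ (since $v(o) = 1 \ge m$). Let $W$ be the connected component of $u$ in the subgraph of $G(\X)$ on vertex set $B$ with \emph{directed} edges $x \to y$ allowed only when $P(x,y) > 0$ and $v(y) \ge m$ — i.e., $W$ is the set of $w$ reachable from $u$ by such directed paths. Suppose $o \notin W$. Then for every $x \in W$ and every $y$ with $P(x,y) > 0$, either $y \in W$ or $v(y) < m$. Consider the chain started at $u$. Define the stopping time $\tau := \inf\{n : v(X_n) < m\}$, and note $\tau \ge 1$ always since $v(X_0) = v(u) = m$. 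On $\{n < \tau\}$ we have $v(X_j) \ge m$ for all $j \le n$; since $X_0 = u \in W$ and each step from a vertex of $W$ either stays in $W$ or drops $v$ below $m$ (ending the excursion), an easy induction gives $X_n \in W$ for all $n < \tau$. In particular $o \notin \{X_0, \ldots, X_{\tau - 1}\}$, so $T_o \ge \tau$, hence $T_o < \infty$ implies $\tau < \infty$ and $T_o \ge \tau$, meaning $v(X_\tau) < m$ with $\tau \le T_o$. Now apply the optional stopping theorem to the bounded martingale $v(X_{n \wedge \tau \wedge T_o})$ (recall $v$ is $P$-harmonic off $o$, $v \le 1$): $m = v(u) = \E_u[v(X_{\tau \wedge T_o})]$. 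But $X_{\tau \wedge T_o}$ either satisfies $v < m$ (if $\tau \le T_o$) or equals $o$ with $v = 1$ (on $\{T_o < \tau\} = \emptyset$ as shown)... so in fact $\tau \le T_o$ always on $\{T_o < \infty\}$, and on $\{T_o = \infty\}$ transience together with $v(X_n) \to 0$ (which holds a.s.\ for transient chains, as $v(X_n) = \Pr_{X_n}[T_o < \infty] \to 0$) gives the same conclusion — either way $\E_u[v(X_{\tau \wedge T_o})] < m$ strictly unless $\Pr_u[\tau \wedge T_o$ lands where $v \ge m]$... I would clean this up: the honest statement is $m = v(u) = \E_u[v(X_{\tau})\Ind{\tau < \infty}] + \liminf$-type term, and since $v(X_\tau) < m$ on $\{\tau < \infty\}$ and $v(X_n) \to 0$ on $\{\tau = \infty\}$, we get $m < m$ unless $\Pr_u[\tau < \infty] = 0$ — but $\Pr_u[\tau < \infty] \ge \Pr_u[T_o < \infty] = m > 0$, contradiction. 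Hence $o \in W$, which is exactly the existence of the desired path.

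The statement about $A_\delta$ being connected then follows immediately: for any $u \in A_\delta$, $v(u) \ge \delta$, and the path just constructed has all vertices $u_i$ with $v(u_i) \ge v(u) \ge \delta$, so it lies in $A_\delta$ and connects $u$ to $o$ (using edges of $G(\X)$, which only requires $P(u_{i-1}, u_i) + P(u_i, u_{i-1}) > 0$, and we have even $P(u_{i-1},u_i) > 0$). The case of $A_\delta^*$ is identical with $P^*$, $v^*$ in place of $P$, $v$, noting $\X^*$ is also irreducible and transient (by the Remark following Theorem~\ref{thm:AG}, or directly). \textbf{The main obstacle} I anticipate is handling the optional-stopping step carefully when $T_o = \infty$ with positive probability: one must justify $v(X_n) \to 0$ a.s.\ on transient paths and control the limiting behavior of the bounded martingale $v(X_{n \wedge \tau})$, i.e., invoke the martingale convergence theorem and identify the limit — this is routine but is the one place where transience is genuinely used and where a sloppy argument could hide a gap. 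An alternative that sidesteps optional stopping entirely: directly argue that if $o \notin W$ then starting from $u$ the walk a.s.\ never hits $o$ before leaving $\{v \ge m\}$, then use the strong Markov property and a renewal argument at successive returns to level $\ge m$ to conclude $v(u) = \Pr_u[T_o < \infty]$ would have to be $0$; but the martingale route is shorter to write.
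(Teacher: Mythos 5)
Your argument is correct and is essentially the paper's own proof: you define the same reachable set (the paper's $B$, your $W$), make the same observation that any step out of it drops the voltage below $v(u)$, and derive the contradiction $v(u)<v(u)$ from the stopped bounded martingale $v(X_{n\wedge\tau})$, exactly as the paper does with $\tau=T_{B^c}$. The only cosmetic difference is that the paper first shows $T_{B^c}<\infty$ a.s.\ via $\Pr_u[T_{B^c}>t]\,v(u)\le \mathbb{E}_u[v(X_t)]\to 0$ and then applies optional stopping, whereas you split on $\{\tau=\infty\}$ and use $v(X_n)\to 0$ a.s.\ there; both rest on the same transience input \eqref{e:i.o.}.
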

\begin{proof}
Assume towards a contradiction that no such path exists. Let $B$ be the collection of all $y$'s such that there exists a path $u_0=u,u_1,\ldots, u_{\ell}=y $ (for some $\ell \in \N$) in $G(\X)$ such that for all $0 \le i < \ell$ we have that $P(u_i,u_{i+1})>0$ and $v(u) \le v(u_{i+1})$. Observe that if $w \notin B$ and $P(b,w)>0$ for some $b \in B $ then $v(w)<v(u)$ (as otherwise we would have that $w \in B$). Consider the case that $X_0=u$.  Since by assumption $o \notin B $  we have that $v(X_{n \wedge T_{B^c} }) $ is a bounded non-negative martingale (if $o \in B$ it would have been only a super-martingale). By \eqref{e:i.o.} $\lim_{n \to \infty} \mathbb{E}_u[v(X_{n})]=\Pr_u[o \text{ is visited infinitely often}]=0 $ and hence $T_{B^c}<\infty $ a.s. (as $\Pr_u[T_{B^c}>t]v(u) \le \Pr_u[T_{B^c}>t] \mathbb{E}_u[v(X_{t}) \mid T_{B^c}>t] \le\mathbb{E}_u[v(X_{t})] $).  By optional stopping we get that $v(u)=\mathbb{E}_u[v(X_{T_{B^c} })]$. However this fails, as $v(X_{T_{B^c} })<v(u)$. A contradiction!
\end{proof}
 
 The main ingredient in the proof of Theorem \ref{thm:1} is the following proposition.
\begin{proposition}
\label{p:main}
Let $\mathbf{X}$ be a transient irreducible Markov chain admitting a stationary measure on a countable state space $V$. Assume that $\mathbf{X}^{\sss}$ is also transient.   Let $o \in V$. Then
\begin{equation}
\label{e:main}
\forall \,  \delta>0, \qquad  \mathrm{Cap}(A_{\delta}^*;\mathbb{E}_o[N]) \le 2.
\end{equation}
\end{proposition}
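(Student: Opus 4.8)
The plan is to combine the energy bound of Proposition~\ref{p:Denergyofv} with the pointwise comparison \eqref{e:N} and the Dirichlet principle \eqref{e:DP}, exactly along the lines of \cite{traces} but working with $v^*$ and $A_\delta^*$ rather than $v$ and $A_\delta$. The first step is to record that, since $\X^{\sss}$ is assumed transient, its additive symmetrization is a genuine (transient) reversible network with edge weights $c_{\sss}$, so that $\mathrm{Cap}_{\sss}(\cdot)$ is an honest effective conductance and the reversible Dirichlet principle \eqref{e:DP} applies to it. Next I would observe that by \eqref{e:concave} it suffices to bound $\mathrm{Cap}(A_\delta^*;\mathbb{E}_o[N])$ directly by producing a single test function $f$ of finite support with $f\upharpoonright A_\delta^*\equiv 1$ whose $\mathbb{E}_o[N]$-energy is at most $2$ (or rather, of approaching such a test function along a finite exhaustion — see below). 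Actually the cleanest route is: fix a finite exhaustion $V_1\subseteq V_2\subseteq\cdots$ of $V$; by \eqref{e:cap3} it is enough to bound $\mathrm{Cap}(A_\delta^*\cap V_n;\mathbb{E}_o[N])$ uniformly in $n$, and for the finite set $A_\delta^*\cap V_n$ one may legitimately use the Dirichlet principle for the network $(G(\X),\mathbb{E}_o[N])$.

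The heart of the argument is the choice of test function. Following \cite{traces}, the natural candidate is (a truncation of) $g:=\min\{v^*/\delta,\,1\}$, which equals $1$ on $A_\delta^*$ by definition of $A_\delta^*$. Using \eqref{e:N}, for an edge $\{x,y\}$ we have $\mathbb{E}_o[N(x,y)]\le 2\alpha\, c_{\sss}(x,y)\max\{v^*(x),v^*(y)\}$, so
\[
\sum_{x,y}\mathbb{E}_o[N(x,y)]\,(g(x)-g(y))^2 \;\le\; 2\alpha\sum_{x,y} c_{\sss}(x,y)\max\{v^*(x),v^*(y)\}\,(g(x)-g(y))^2 .
\]
On the support of $g$ one has $\max\{v^*(x),v^*(y)\}\le\delta$ wherever $(g(x)-g(y))^2\neq 0$ and both endpoints lie below the truncation threshold, while the factor $1/\delta^2$ comes out of $(g(x)-g(y))^2 = (v^*(x)-v^*(y))^2/\delta^2$ on that region; the edges with one endpoint above threshold contribute an analogous term. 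Carefully bookkeeping these cases, the right-hand side is at most $\tfrac{2\alpha}{\delta}\cdot\mathcal{E}_{S}(h,h)$ for an appropriate truncation $h$ of $v^*$ at level $\delta$, and by (the time-reversal version of) Proposition~\ref{p:Denergyofv} together with \eqref{e:cap2}, $\mathcal{E}_{S}(h,h)\le \mathrm{Cap}_*(\{o\})=\mathrm{Cap}(\{o\})=1/\alpha$. Since the truncation of $v^*$ at level $\delta$ scaled by $1/\delta$ is exactly $g$, the constant $\alpha$ and $\delta$ cancel up to the factor $2$, giving the bound $2$. The finite-support requirement is handled by first restricting to $V_n$ (replacing $g$ by $g\cdot\mathbf 1_{V_n}$, adjusting on the boundary) and passing to the limit via \eqref{e:cap4}, \eqref{e:cap3}, exactly as in the proof of Proposition~\ref{p:Denergyofv}.

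\textbf{Main obstacle.} I expect the delicate point to be the bookkeeping in the energy estimate: the bound \eqref{e:N} inserts the weight $\max\{v^*(x),v^*(y)\}$, which is only controlled by $\delta$ on the sublevel set $V_\delta^*$, so one must truncate $v^*$ at level $\delta$ before comparing energies and check that truncation does not increase the relevant Dirichlet energy — this is where \cite[Lemma 9.23]{lyons} (flagged in the introduction as crucial) presumably enters, guaranteeing that the energy of the truncated function is still bounded by $\mathrm{Cap}(\{o\})$, or more precisely that one may replace $v^*$ by $\min\{v^*,\delta\}$ without loss. A second, more technical obstacle is that $A_\delta^*$ need not be finite and $(G(\X),\mathbb{E}_o[N])$ need not be locally finite, so the Dirichlet principle cannot be applied verbatim; this is why the argument must be run through the finite exhaustion $\{A_\delta^*\cap V_n\}$ and the limiting relations \eqref{e:cap3}--\eqref{e:cap4}, and one must make sure the test functions $g\cdot\mathbf 1_{V_n}$ are admissible (finite support, value $1$ on $A_\delta^*\cap V_n$) and that their energies still converge to the desired bound. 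Everything else is a direct transcription of the reversible argument of \cite{traces} with $P$ replaced by $P^*$ in the appropriate places, using \eqref{e:cap2} to move freely between the chain and its time-reversal.
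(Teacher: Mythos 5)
There is a genuine gap, and it sits exactly at the point your bookkeeping glosses over. Write $g:=\min\{v^*/\delta,1\}$ and $h:=\min\{v^*,\delta\}$. On an edge $\{x,y\}$ with both endpoints in $V_{\delta}^*$ your estimate is fine, and on edges inside $A_{\delta}^*$ the increment of $g$ vanishes. But on a straddling edge, $v^*(x)>\delta>v^*(y)$, the bound \eqref{e:N} only gives
\[
\mathbb{E}_o[N(x,y)]\,(g(x)-g(y))^2 \;\le\; 2\alpha\, c_{\sss}(x,y)\,v^*(x)\,\frac{(\delta-v^*(y))^2}{\delta^2},
\]
and $v^*(x)$ may be of order $1$, not of order $\delta$, while $(g(x)-g(y))^2$ need not be small (e.g.\ $v^*(y)\ll\delta$). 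So these edges are \emph{not} dominated by $\frac{2\alpha}{\delta}c_{\sss}(x,y)(h(x)-h(y))^2$; the ratio can be as large as $1/\delta$, and the energy of your test function is only bounded by something of order $1/\delta$ rather than by $2$ — useless, since the application in the proof of Theorem \ref{thm:1} needs a bound that stays finite as $\delta\to 0$ (while $A_{\delta}^*\nearrow V$). This is precisely the obstruction the paper isolates in \S\ref{s:motivation}: the argument would work verbatim \emph{if} $v^*\equiv\delta$ on the boundary $W_{\delta}^*$, and the whole of \S\ref{s:aux} exists to force this, by subdividing each straddling edge $\{a,b\}$ with a new state $z_{a,b}$ at which $\hat v^*(z_{a,b})=\delta$ exactly, and then transferring capacities between $\X$ and the auxiliary chain via Corollary \ref{cor:5.4} and \eqref{e:5.6}--\eqref{e:new}. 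Note that the reversible proof in \cite{traces} requires the same subdivision, so "a direct transcription of \cite{traces} with $P$ replaced by $P^*$" is not what you have written: you have replaced the subdivision by a claim that fails.

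Two smaller misreadings: \cite[Lemma 9.23]{lyons} (Lemma \ref{lem:russ} here) is not about truncating $v^*$ — truncation at a constant is trivially an energy contraction and needs no lemma; its actual role is inside Lemma \ref{lem:rest}, to show that the capacity of the internal boundary is unchanged when the network is restricted to the boundary plus the complement, which is how the paper reduces $\con(A_{\delta}^*;\mathbb{E}_o[N])$ to a quantity supported on edges whose endpoints have $\hat v^*\le\delta$. Also \eqref{e:concave} is irrelevant to Proposition \ref{p:main}, which already concerns $\mathbb{E}_o[N]$. Your use of Proposition \ref{p:Denergyofv} for the time-reversal (giving $\mathcal{E}_S(h,h)\le \mathrm{Cap}(o)=1/\alpha$) is a reasonable substitute for the paper's combination of Lemma \ref{lem:v2} with \eqref{e:cap5}, so the scaffolding of your plan matches the paper's; but without the subdivision (or some other device handling the straddling edges) the central inequality does not hold, and the proof does not go through.
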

We now present the proof of Theorem \ref{thm:1}, assuming \eqref{e:main}.

\medskip

\emph{Proof of Theorem \ref{thm:1}:} Let $G=G(\X)$ be as in \eqref{e:G(X)}. By Remark 1 from \S\ref{s:comments} (which we now briefly recall)  it suffices to show that  $(G, N )$ is a.s.\ recurrent (which is equivalent to   $(\mathrm{PATH}, N )$ being recurrent): As SRW on  $\mathrm{PATH}$ is the random walk on  $(\mathrm{PATH}, \mathbf{1} )$,  Rayleigh's monotonicity principle (e.g.~\cite[Ch.\ 2]{lyons} or \cite{doyle}), together with the fact that $N \ge 1$ on the edges of $\mathrm{PATH} $, implies that if  $(\mathrm{PATH},N)$ is recurrent then so is SRW on  $\mathrm{PATH}$. By \eqref{e:concave} it is in fact sufficient to show that   $(G, \mathbb{E}_o[N] )$ is recurrent. Indeed if    $(G, \mathbb{E}_o[N] )$ is recurrent, then by \eqref{e:concave} $\mathbb{E}_o[\mathrm{Cap}(o;N)]=0 $. As $\mathrm{Cap}(o;N) \ge 0 $ we get that $\mathrm{Cap}(o;N)=0 $ a.s..  

We now show $(G, \mathbb{E}_o[N] )$ is recurrent. If $\X^{\sss} $ is recurrent, then  by \eqref{e:N}  $(G, \mathbb{E}_o[N] )$ is also recurrent, as its edge weights are (edge-wise) larger than those of $\X^{\sss}$ by at most a bounded factor of $2 \alpha$.
If $\X^{\sss} $ is transient then   by \eqref{e:main}  $\lim_{\delta \to 0} \mathrm{Cap}(A_{\delta}^*;\mathbb{E}_o[N]) \le 2 $.  If the network  $(G,\mathbb{E}_o[N])$ was transient, then
by Lemma \ref{lem:2.3} below and the fact that $A_{\delta}^* \nearrow V $ as $\delta \to 0$, in conjunction with \eqref{e:cap3}, we would have  $\lim_{\delta \to 0} \mathrm{Cap}(A_{\delta}^*;\mathbb{E}_o[N]) = \infty $, a contradiction!\qed
\begin{lemma}[\cite{traces} Lemma 2.3]
\label{lem:2.3}
Let $G=(V,E)$ be a graph. If $(G,c)$ is a transient network then for all $M>0$ there exists a finite set $A$ such that $\mathrm{Cap}(A;c) \ge M$.
\end{lemma}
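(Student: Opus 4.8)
The plan is to prove the contrapositive: if $\mathrm{Cap}(A;c) \le M$ for every finite set $A$, then the network $(G,c)$ is recurrent. Recall that a network is transient iff $\mathrm{Cap}(x;c)>0$ for some vertex $x$, and more quantitatively, recurrence is equivalent to $\mathcal{R}_{x \leftrightarrow \infty}=\infty$, i.e.\ the effective conductance from $x$ to infinity vanishes. So it suffices to show that, under the assumption, $\con(x)=0$ for every vertex $x$; by definition of $\con$ via the supremum over finite subsets it is then enough to bound $\con$ uniformly and conclude the supremum defining the capacity of $V$ itself is finite — but we must be a little more careful, since $\con(V)$ is not literally $\con(x \leftrightarrow \infty)$.

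The cleaner route is via the exhaustion formula \eqref{e:cap4}: fix a vertex $x=o$ and a nested exhaustion $V_1 \subseteq V_2 \subseteq \cdots$ of $V$ by finite sets with $o \in V_1$. Transience of $(G,c)$ means $\mathrm{Cap}(o;c)>0$, and by the remark following \eqref{e:cap5} we have $\mathrm{Cap}(o;c)=\lim_{m \to \infty} \mathrm{Cap}(o, V_m^c;c)$, the increasing limit of the capacities between $\{o\}$ and the complements of the exhausting sets. Under reversibility these are exactly effective conductances $1/\mathcal{R}_{o \leftrightarrow V_m^c}$, and the finite sets $A_m$ one should look at are spherical shells or, more simply, the sets on which the (harmonic) voltage $v_{o,V_m^c}$ is at least some threshold. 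First I would take the unit current flow from $o$ to $V_m^c$ inside $(V_m, c\upharpoonright V_m)$; by the Dirichlet/Thomson principle its energy equals $\mathcal{R}_{o \leftrightarrow V_m^c} = 1/\mathrm{Cap}(o,V_m^c;c)$. The idea is that if all these resistances were bounded above (equivalently, if $\mathrm{Cap}(o;c)>0$), then one could push the current flow "out to infinity" and extract a finite set $A$ carrying a large capacity, contradicting the hypothesis.

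Concretely, the key steps in order: (1) Assume $(G,c)$ transient, so $\kappa:=\mathrm{Cap}(o;c)>0$. (2) For each $m$, let $h_m:=v_{o,V_m^c}$ be the harmonic voltage (equal to $1$ at $o$, $0$ off $V_m$), and consider a level set $A_{m} := \{ x : h_m(x) \ge 1/2 \}$, a finite set since $h_m$ is supported in $V_m$. (3) Use the fact that $h_m$ is harmonic on $V_m \setminus \{o\}$ together with the maximum principle to show that $A_m$ separates $o$ from $V_m^c$ and that the current flowing out of $A_m$ still has strength close to $\mathrm{Cap}(o,V_m^c;c)$; more precisely, by series/parallel (Nash–Williams type) reasoning, $\mathrm{Cap}(A_m;c\upharpoonright V_m) \ge c_1 \cdot \mathrm{Cap}(o,V_m^c;c)$ for an absolute constant $c_1$ (the factor coming from the restriction to the sub-level $\{h_m \le 1/2\}$ where the remaining voltage drop is at least $1/2$). (4) By Rayleigh monotonicity, $\mathrm{Cap}(A_m;c) \ge \mathrm{Cap}(A_m;c\upharpoonright V_m) \ge c_1 \mathrm{Cap}(o,V_m^c;c) \to c_1 \kappa > 0$ as $m \to \infty$; but in fact one gets divergence rather than a positive limit, because one can iterate: chop $[0,1]$ into $K$ dyadic-like voltage bands, in each band find a finite cut-set $A$ with $\mathrm{Cap}(A;c) \gtrsim K \kappa$ (since the effective resistance across a single band is only $\mathcal{R}_{o \leftrightarrow \infty}/K \le 1/(K\kappa)$), so letting $K \to \infty$ produces finite sets of arbitrarily large capacity, the desired contradiction.

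The main obstacle is step (3): controlling how the capacity of a level set of the voltage compares to the global effective conductance, i.e.\ making the "each voltage band is a resistor of resistance $\ge 1/(K\kappa)$" argument rigorous for general (possibly non-locally-finite, possibly pathological) networks, rather than for nice graphs where one can appeal to spheres. The honest way is to invoke the Dirichlet principle \eqref{e:DP} directly: given a unit flow from $o$ to infinity of energy $\le 1/\kappa$ (which exists since $\mathrm{Cap}(o;c)=\kappa>0$), restrict attention to the portion of the network where the associated potential lies in a short interval $[(j-1)/K, j/K]$; the flow restricted there, rescaled, is a unit flow across that "annulus" of energy at most $1/(K\kappa)$, so the capacity between the two boundary level sets is $\ge K\kappa$, and taking a finite truncation of one of these (compact) level sets and using \eqref{e:cap4} and \eqref{e:cap3} yields a finite set of capacity $\ge K\kappa/2$, say. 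Sending $K\to\infty$ finishes the proof. I would expect the write-up to spend most of its length justifying that the level sets involved can be taken finite and that the restriction-of-a-flow-to-an-annulus step is valid in this generality.
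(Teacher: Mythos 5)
Your plan is built around voltage bands: you lower-bound the capacity \emph{between two consecutive level sets} of the potential by $K\kappa$ and then try to convert this into a finite set of large capacity. That conversion step is where the argument breaks. The quantity the lemma needs is the capacity to infinity, $\mathrm{Cap}(A;c)$, and the Dirichlet principle \eqref{e:DP} gives the inequality in the \emph{opposite} direction: the infimum defining $\mathrm{Cap}(A,B)$ runs over a smaller class of test functions than the one defining $\mathrm{Cap}(A)$, so $\mathrm{Cap}(A,B)\ge \mathrm{Cap}(A)$ always. A huge capacity across one thin ``annulus'' therefore says nothing about the capacity of its boundary level set to infinity (think of two adjacent vertices joined by an enormous conductance inside an otherwise moderate network: the pairwise capacity is huge, the capacity to infinity is not). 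The same confusion appears in your step (4), where ``the effective resistance across a single band is $\le 1/(K\kappa)$'' is again a statement about band resistances, not about resistance to infinity. A second, related gap is the finiteness issue you flag but do not resolve: level sets $\{v\ge t\}$ need not be finite (the paper's own example of a plane in $\Z^3$ shows how badly the naive formulas behave for infinite sets), and your proposed fix via \eqref{e:cap4} and \eqref{e:cap3} goes the wrong way — \eqref{e:cap3} bounds the capacity of a finite truncation from \emph{above} by that of the full level set, and to pass a lower bound from an infinite level set to a finite subset you would need a Thomson-type extremal characterization of $\mathrm{Cap}$ for infinite sets, which is not available in the paper and is precisely the kind of point where [traces] itself needed repair (Lemma 9.23 of Lyons--Peres).

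The correct argument is much shorter and avoids level sets entirely; it is the ``tail of a convergent sum'' version of the idea you gesture at when you say ``push the current flow out to infinity.'' By transience there is a unit flow $\theta$ from $o$ to $\infty$ of finite energy. Since the energy is a convergent sum over edges, choose a \emph{finite} set $A\ni o$ such that the energy of $\theta$ on edges not contained in $A$ is less than $1/M$. Then $\theta'(x,y):=\theta(x,y)\Ind{\{x,y\}\nsubseteq A}$ is still divergence-free at every vertex outside $A$ (all edges incident to such a vertex are untouched), hence is a unit flow from the finite set $A$ to infinity of energy $<1/M$; for a finite set, capacity equals effective conductance, so $\mathrm{Cap}(A;c)\ge M$. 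No voltage decomposition, no two-set capacities, and no infinite level sets are needed, which is exactly why the paper's proof is three lines long.
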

The proof below is taken from \cite{traces} and is given below for the sake of completeness.
\begin{proof}
Let $\theta$ be a unit flow of finite energy from a vertex $o$ to $\infty$. Since
$\theta$ has finite energy, there is some finite set $\{o\} \subseteq A \subset V$ such that the energy of $\theta$ on the
edges with some endpoint not in $A$ is less than $1/M$ (i.e., $\sum_{x,y \in V :x \in A^c,y \sim x }\theta^2(x,y)/c(x,y)<1/M $). Hence $\theta'(x,y):=\theta(x,y)\Ind{\{x,y\} \nsubseteq A} $ is a unit flow from $A$ to $\infty$ whose energy is at most $1/M$. That is, the effective-resistance from $A$ to infinity is at most $1/M$. 
\end{proof}
\section{Subdividing edges}
\label{s:aux}
Recall that $A_{\delta}^*=\{a:v^{*}(a) \ge \delta \}$ and that $V_{\delta}^*:=V \setminus A_{\delta}^* $. Following \cite{traces} the following sets shall play a major role in our analysis: 
\begin{equation}
\label{Adel}
\begin{split}
 & W_{\delta}^*:=\{a \in A_{\delta}^*:P(a,V_{\delta}^*)>0 \}=``\pd_{\mathrm{v}}^{\mathrm{int}} A_{\delta}^* =\text{internal vertex-boundary of }A_{\delta}^* ", \quad  \\ & U_{\delta}^*:=\{b\in V_{\delta}^* :P^{*}(b,W_{\delta}^*)>0 \}=\{b \in V_{\delta}^*:\sum_{a \in A_{\delta}^* } P(a,b)>0 \} \\ & =``\pd_{\mathrm{v}}^{\mathrm{ext}} A_{\delta}^* =\text{external vertex-boundary of }A_{\delta}^*".
\end{split}
\end{equation}
We used quotes above as $W_{\delta}^*$ (resp.\ $U_{\delta}^* $) is the internal (resp.\ external) vertex-boundary of $A_{\delta}^* $ only in some directed sense. For technical reasons, we consider also the sets
\begin{equation*}
\begin{split}
 & \widehat W_{\delta}^*:=\{a \in W_{\delta}^*: v^{*}(a)> \delta  \} \quad \text{and}
\\ & \widehat U_{\delta}^* :=\{b \in U_{\delta}^*: P^{*}(b, \widehat W_{\delta}^*)>0 \}=\{b \in U_{\delta}^* :\sum_{a \in \widehat W_{\delta}^* } P(a,b)>0 \} .
\end{split}
\end{equation*}
Similarly to \cite{traces}, a crucial ingredient in the argument (behind Proposition \ref{p:main}) is that for every fixed $\delta \in (0,1) $, instead of $\X$ we can consider an auxiliary Markov chain\footnote{For each $\delta$ we construct a separate auxiliary  chain. For the sake of notational convenience we suppress the dependence of $\delta$ from some of the notation.} $\widehat \X :=(\widehat X_n)_{n=0}^{\infty} $ in which the edges between $ \widehat W_{\delta}^* $ and $ \widehat U_{\delta}^* $ are subdivided\footnote{The reason we subdivide only edges between $\widehat W_{\delta}^*$  $\widehat U_{\delta}^*$ and not all edges between $ W_{\delta}^*$ and  $ U_{\delta}^* $ is that it is crucial that the terms in the second line of   \eqref{e:sub1} will lie strictly in $(0,1)$.}, so that $\widehat \X$ can access the collection of states whose voltage w.r.t.\ the time-reversal of $\widehat \X$ is less than $\delta$ only by first passing through states at which this voltage is precisely $\delta$. Under reversibility our construction agrees with the one from \cite{traces}. In general, our construction is more involved as instead of specifying the edge weights and applying an elementary network reduction, we have to specify  the  transition probabilities of the auxiliary chain in terms of that of the original chain. Unfortunately, we cannot apply a network reduction to the auxiliary chain in order to argue that it is in some sense equivalent to the original chain. We also have to check separately that certain properties hold for $\widehat \X$ while other hold for its time-reversal $\widehat \X^*:=(\widehat X_n^*)_{n=0}^{\infty}$.

\subsection{Motivation for subdividing edges}
\label{s:motivation}

To motivate what comes, before diving into the details of the construction of the auxiliary chain, we first motivate it and then describe several properties which we want the construction to satisfy. Recall that we denote the capacity of a set $A$ in the restriction of the network $(G,c)$ to some set $D$ by $\mathrm{Cap}(A;c \upharpoonleft D ) $.    
It follows from the analysis in \S\ref{s:proof} that
\begin{equation}
\label{e:motivation}
\begin{split}
& \con (A_{\delta}^*;\mathbb{E}_o[N]) =\con ( W_{\delta}^*;\mathbb{E}_o[N] \upharpoonleft  W_{\delta}^* \cup V_{\delta}^* ).
\\ & \con ( W_{\delta}^*;c_{\sss} \upharpoonleft  W_{\delta}^* \cup V_{\delta}^* )=\con (A_{\delta}^*;c_{\sss}) \le \sfrac{1}{ \alpha \delta} .
\end{split}
\end{equation} 
Now, if it was the case that $v^* \upharpoonleft  W_{\delta}^* \equiv \delta $, then by \eqref{e:N} $\mathbb{E}_o[N(x,y)] \le 2 \alpha \delta c_{\sss}(x,y) $ for all $x,y \in  W_{\delta}^* \cup V_{\delta}^*  $. It would have then followed that \[\con (A_{\delta}^*;\mathbb{E}_o[N]) =\con ( W_{\delta}^*;\mathbb{E}_o[N] \upharpoonleft  W_{\delta}^* \cup V_{\delta}^* ) \le 2 \alpha \delta \con ( W_{\delta}^*;c_{\sss} \upharpoonleft  W_{\delta}^* \cup V_{\delta}^* ) \le 2,\] which is precisely the assertion of Proposition \ref{p:main}. The auxiliary chain we construct below allows us to overcome the fact that in practice we might not have that $v^* \upharpoonleft  W_{\delta}^* \equiv \delta $.

Equation \eqref{e:motivation} will not be proven explicitly and is only meant to serve as motivation for the construction of the auxiliary chain and for the analysis in Section \ref{s:proof}.  We end this section by giving some intuition behind the \eqref{e:motivation}. The equalities in \eqref{e:motivation} follow from the following  general principle for reversible chains (see Lemma \ref{lem:rest}): if $B$ is a connected set and  for every $b \in B$ the walk (w.r.t.\ edge weights $c$) started from $b$ a.s.\ visits $B$ only finitely many times then $\con (B;c)=\con (\pd B;c \upharpoonleft B^c \cup \pd B) $, where $\pd B:=\{b  \in B:P(b,B^c)>0\} $ is the internal vertex-boundary of $B$. The above condition on $B$  for $B=A_{\delta}^*$ for the networks induced by the edge weights  $\mathbb{E}_o[N] $ and $c_{\sss} $ is verified in Lemma \ref{lem:v}. We note that above we have been somewhat imprecise, as the boundary $\pd A_{\delta}^*$ (w.r.t.\ the edge weights  $\mathbb{E}_o[N] $ and $c_{\sss} $)   may be larger than $W_{\delta}^*$ (it is equal to the set $W$ from Corollary \ref{cor:5.4}). In section \S\ref{s:proof} we are of course more precise. In fact, over there we have to work also with the auxiliary chain we define below, for which the condition corresponding to  $v^* \upharpoonleft  W_{\delta}^* \equiv \delta $ holds by construction. Because of that, a corresponding statement to $\con (A_{\delta}^*;\mathbb{E}_o[N]) \le 2$ will indeed hold for the auxiliary chain (while the above derivation  was conditional on   $v^* \upharpoonleft  W_{\delta}^* \equiv \delta $). Fortunately, the auxiliary chain is constructed to be intimately related to the original chain, in a manner which allow us to translate back the aforementioned corresponding statement to the desired one for the original chain.   

The inequality $\con (A_{\delta}^*;c_{\sss}) \le \sfrac{1}{ \alpha \delta} $ follows from Lemma \ref{lem:v2}, which asserts that  $\con_* (A_{\delta}^*) \le \sfrac{1}{ \alpha \delta} $ (where $\con_* $ denotes capacity w.r.t.\ the time-reversal), along with the fact that by \eqref{e:cap5} we have that $ \con (A_{\delta}^*;c_{\sss}) =\con_{\sss} (A_{\delta}^*) \le \con_* (A_{\delta}^*) $. We note that the proof of Lemma \ref{lem:v2} is different than that of the corresponding result in \cite{traces} which relies on reversibility.       

\subsection{Description of the auxiliary chain and some of its properties}

The state space of the auxiliary Markov chain is
\begin{equation}
\label{e:JZ}
\begin{split}
& \widehat V := V \cup Z, \quad \text{where} \quad Z:=\{z_{a,b}: (a,b) \in J \} \text{ and} \\ & J:=\{(a,b) \in \widehat W_{\delta}^*  \times \widehat U_{\delta}^* :P(a,b)>0  \}.
\end{split}
\end{equation}

We employ the convention that for all $a,b$ such that $(a,b) \in J$ \[z_{a,b}=z_{b,a}.\] 
We emphasize that $z_{a,b}$ is for each $(a,b) \in J$ a state of the  auxiliary Markov chain.  The  auxiliary Markov chain is obtained from the original chain by subdividing the edge $\{a,b\}$ into two edges $\{a,z_{a,b}\}$ and $\{z_{a,b},b\}$ connected in series in a way we describe below. Namely, we shall define the transition matrix $\widehat P$ of the auxiliary chain such that for all   $x \in V \setminus ( \widehat W_{\delta}^* \cup \widehat U_{\delta}^* )$ we have that $\widehat P(x,y)=P(x,y)$ for all $y \in V$ and $\widehat P(x,z)=0$ for all $z \in Z$. We shall define $\widehat P$ so that in particular for all $(a,b) \in J$ we have that  $\widehat P(z_{a,b},a)+ \widehat P(z_{a,b},b)=1$ and $\widehat P(x,z_{a,b})=0$ for all $x \in \{a,b\}^c$ (this justifies the usage of the term ``subdivision" above, and the description of  $\{a,z_{a,b}\}$ and $\{z_{a,b},b\}$ as being ``connected in series").  Hence we only need to specify that transition probabilities away from states in $Z \cup \widehat W_{\delta}^*  \cup \widehat U_{\delta}^* $ which shall be chosen carefully to ensure that the non-lazy version of the induced chain of $\widehat \X $ on $V$ is $\X$, which loosely speaking means that when we view $\widehat \X $ at times it is in $V$ we obtain a realization of $\X$ (this description is slightly imprecise, see property (d) below for a precise statement).

Recall that the time-reversal of $\widehat{\mathbf{X}}^* $ is denoted by  $\widehat \X^* :=(\widehat X_n^*)_{n=0}^{\infty}$. To be precise, below we actually first construct a certain auxiliary chain $\widehat \X^*  $, find a stationary measure for it $\hat \pi$, and then define $\widehat \X$ as the time-reversal of $\widehat \X^* $ w.r.t.\ $\hat  \pi $ (this of course ensures that $\widehat \X^* $ is the time-reversal of $\widehat \X $ w.r.t.\ $\hat \pi$, and so there is no abuse of notation).

 Voltages and capacities w.r.t.~$\widehat \X $ (resp.~$\widehat \X^* $) shall be denoted by $\hat v $ and $\widehat \con$ (resp.\ $\hat v^* $ and $\widehat{ \con_{*}} $).  Its  transition kernel and a stationary measure for it (defined in (f) below)  shall be denoted by $\widehat P $ and  $\hat \pi $. We denote the corresponding probability and expectation for $\widehat \X$ by $\widehat \Pr $ and $\mathbb{ \widehat E}$. Finally, let \[ \widehat N(x,y)=\widehat N(y,x):=| \{n:  \{\widehat X_n, \widehat X_{n+1}\}= \{x,y  \} \} |  \] be the number of (undirected) crossings of the edge $\{x,y \}$ by the chain $\widehat \X $.

The (desirable) condition  $v^* \upharpoonleft  W_{\delta}^* \equiv \delta $  from \S\ref{s:motivation} can be phrased as the condition that  $v^*(x)=\delta $ for all $x$ such that $v^*(x) \ge \delta $ and $P(x,y)>0$ for some $y$ such that $v^*(y)<\delta $. While this may fail, the key property that the auxiliary chain will posses is that   $\hat v^*(x)=\delta $ for all $x \in \widehat V $ such that $\hat v^*(x) \ge \delta $ and $\widehat P(x,y)>0$ for some $y \in \widehat V$ such that $\hat v^*(y)<\delta $. This is in fact the reason we define the auxiliary chain.
 
\begin{remark}
We stress the fact that the sets $A_{\delta}^*,\widehat W_{\delta}^{*} $ and $\widehat U_{\delta}^{*} $ below are (as above) all defined w.r.t.\ $\X$, not w.r.t.\ $\widehat \X$.
\end{remark}

We assume, without loss of generality that $P(x,x)=0=P^*(x,x)$ for all $x \in V $.\footnote{Otherwise we may replace $P$ by $P'(x,y):=\frac{P(x,y)\Ind{x \neq y}}{1-P(x,x)}$.} The chain $\widehat \X$ is constructed to satisfy the following properties\footnote{Below (a)-(e) are the more important properties.} (some of the properties below were already mentioned above, but are repeated below to facilitate ease of reference):
\begin{itemize}
\item[(a)] $\hat v ^* (u)=v^*(u) $ for all $u \in V$, where $\hat v^*(u):=\hat v^*_o(u)=\widehat \Pr_u^{*}[T_o< \infty]$ and $\widehat \Pr^* $ is the law of $\widehat \X^* $.
\item[(b)] For all $z \in Z$ we have that $ \hat v ^* (z)=\delta $.  Moreover, for every $(a,b) \in J  $ we have that  \[\widehat P(z_{a,b},a)+ \widehat P(z_{a,b},b)=1=\widehat P^{*}(z_{a,b},a)+\widehat P^{*}(z_{a,b},b). \]
Equivalently, $\widehat P(z_{a,b},a)+ \widehat P(z_{a,b},b)=1$ and $\widehat P(x,z_{a,b})=0$ for $x \in \{a,b\}^c$ and the same holds for $\widehat {P}^*$.
 Hence by (a) it must be the case that for all $(a,b) \in J$ we have that
\begin{equation}
\label{e:sub1}
\begin{split}
& \widehat P^{*}(z_{a,b},a)v^*(a)+\widehat P^{*}(z_{a,b},b)v^*(b)= \delta \quad \text{and so} \quad \\ & \quad \widehat P^{*}(z_{a,b},a)=\frac{ \delta-v^*(b)}{v^*(a)-v^*(b)}>0 \quad \text{and} \quad \widehat P^{*}(z_{a,b},b)=\frac{ v^*(a)-\delta}{v^*(a)-v^*(b)} >0.
\end{split}
\end{equation}
\item[(c)] Let $G(\X)=(V,E)$ be the graph supporting the transitions of $\X$ as in \eqref{e:G(X)}. Then $G(\widehat \X):=(\widehat V,\widehat E )$ is the graph supporting the transitions of $\widehat \X$, where \[ \widehat E:= ( E \setminus \{ \{a,b \}:(a,b) \in  J\}) \cup \{ \{a,z_{a,b} \},\{z_{a,b},b \}:(a,b) \in J \} \] and $\widehat V $ is as in \eqref{e:JZ}. In words, $G(\widehat \X)$ is obtained from $G(\X)$ by subdividing   each edge $\{a,b \}$ with $(a,b) \in J$ into two edges $\{a,z_{a,b}\} $ and $\{z_{a,b},b \}$ connected in series.
\item[(d)] The non-lazy version of the induced chain of $\widehat \X $ on $V$ is $\X$. More precisely, 
\begin{equation}
\label{e:d1}
\forall \, x , y \in V,\; \widehat \Pr_x[\widehat X_T=y]=P(x,y), \quad \text{where} \quad T:=\inf \{t>0: \widehat X_t \in V \setminus \{ \widehat X_0  \} \}. \end{equation}
Moreover, for all $(a,b) \in J $ and all  $(b,a) \in  J$ we have that
\begin{equation}
\label{e:d2}
\widehat \Pr_a[\widehat X_T=b, \widehat X_{T-1}=z_{a,b}]=P(a,b). \end{equation}
Consequently, if we view the chain $\widehat \X $ only when it visits $V $ and then omit from it all lazy steps (i.e.\ consecutive visits to the same state) then the obtained chain is  $\X$ (here we are using the assumption that $P(x,x)=0$ for all $x \in V $).

More precisely, let $t_0=0 $ and inductively set $t_{i+1}:=\inf \{t>t_{i}:\widehat X_{t} \in V \}$. Let $Y_{i}:=\widehat X_{t_i} $ for all $i$. Then $\mathbf{Y}:=(Y_i)_{i=0}^{\infty}$ is the chain $\widehat \X$ viewed only when it visits $V$. We now transform $\mathbf{Y}$ into a Markov chain $ \mathbf{\widehat Y}:=(\widehat Y_i)_{i=0}^{\infty}$ whose holding probabilities are 0, by omitting repetitions (i.e.\ lazy steps), setting  $\widehat Y_i:=Y_{s_i}$ for all $i$, where  $s_0=0$ and inductively $s_{i+1}:=\inf \{t>s_i:Y_{t} \neq Y_{s_i} \}$. Then by \eqref{e:d1}-\eqref{e:d2} $ \mathbf{\widehat Y}$ is distributed as $\X$. \item[(e)] Consequently, for all $\{ x,y \} \in E $ such that $(x,y),(y,x) \notin J $ we have that
 \[ \mathbb{\widehat E}_o[ \widehat N(x,y)]=\mathbb{E}_o [ N(x,y)] \]
(where $\mathbb{\widehat E}_o$ denotes expectation w.r.t.\ $\widehat \X $), while for all $(a,b) \in  J$
\[ \mathbb{\widehat E}_o[ \widehat N(z_{a,b},b)]  \ge \mathbb{\widehat E}_o[ | \{  i \ge 0 : \{\widehat X_{t_i},\widehat X_{t_{i+1}}\}= \{a,b \}, \widehat X_{t_{i+1}-1}=z_{a,b}   \}  |  ]= \mathbb{E}_o [ N(a,b)]. \]
\item[(f)] A stationary measure $\hat \pi$ of $\widehat \X$ is given by: $\hat \pi (u):=\pi(u) $ for  $u \in V \setminus ( \widehat W_{\delta}^* \cup \widehat U_{\delta}^* ) $, 
\[\forall \, u \in \widehat W_{\delta}^* \cup \widehat U_{\delta}^* , \qquad \hat \pi (u) := \frac{\pi(u)}{1-\sum_{z \in Z } \widehat P(u,z)\widehat P(z,u) }, \]
and $\hat \pi(z_{a,b}) = \hat \pi(a)\widehat P(a,z_{a,b})+ \hat \pi(b)\widehat P(b,z_{a,b})  $ for all $z_{a,b}\in Z$.
\item[(g)] For all $(x,y) \in (V \setminus ( \widehat W_{\delta}^* \cup \widehat U_{\delta}^* )) \times  V $ we have that
 $\widehat P(x,y) =P(x,y)$. \item[(h)] For all $(a,b) \in  J $ and all  $(b,a) \in  J$
 \[\hat \pi(a) \widehat P(a,z_{a,b})\widehat P(z_{a,b},b)= \pi(a) P(a,b). \]
Finally, for every $(x,y) \in V^2  $ such that $(x,y) \notin J$ and $(y,x) \notin J$
 \[\hat \pi(x) \widehat P(x,y) = \pi(x) P(x,y). \]
\end{itemize}
\begin{remark}
\label{rem:hatZ}
Observe that the definitions of $\widehat W_{\delta}^*,\widehat U_{\delta}^*$  and $J$ (together with property (c)) are taken to ensure that the chain $\widehat \X $ cannot move from a state $x$ with $\hat v^*(x)>\delta $ directly to a state $y$ with $\hat v^*(y)<\delta$ (the same however need not apply to $\widehat \X^*$). In other words,
\[A_{\delta}^* \setminus (W_{\delta}^* \setminus \widehat W_{\delta}^* ) = \{u \in \widehat V: \hat v^*(u) > \delta  \},  \]
\[ V_{\delta}^* = \{u \in \widehat V: \hat v^*(u) < \delta  \}, \]
\[\widehat Z:=Z \cup W_{\delta}^* \setminus \widehat W_{\delta}^* =\{u \in \widehat V: \hat v^*(u) = \delta  \}  \supseteq \{u \in \widehat V: \hat v^*(u)  \ge \delta \text{ and }\widehat P(u,V_{\delta}^* )>0 \}  . \]
\end{remark}
\subsection{Construction of the auxiliary chain}
 We now prove the existence of such $\widehat \X$ satisfying properties (a)-(h).  In fact, because we already have \eqref{e:sub1} and also because property (a) is defined in terms of $\widehat \X^* $, it is more convenient to first construct $\widehat \X^*$ (and then define $\widehat \X$ as its time reversal w.r.t.\ $\hat \pi$). We will construct $\widehat \X^* $ to satisfy (a)-(c) and some analogous properties to (d)-(h) from which (d)-(h) (for $ \widehat \X $) will follow. Most importantly, we define $\widehat \X^* $ so that properties (a)-(c) hold and so that (similarly to property (d)) for $T^{*}:=\inf \{t>0: \widehat X_t^* \in V \setminus \{ \widehat X_0^*  \} \} $ we have that for all $x , y \in V$  
\begin{equation}
\label{e:d3}
\widehat \Pr_x^*[\widehat X_{T^*}^*=y]=P^{*}(x,y), \end{equation}
while for all $(a,b) \in J $ and all  $(b,a) \in  J$ we further have that
\begin{equation}
\label{e:d4}
\widehat \Pr_a^{*}[\widehat X_{T^*}^*=b, \widehat X_{T^*-1}^*=z_{a,b}]=P^*(a,b). \end{equation}
Before proving \eqref{e:d3}-\eqref{e:d4} we note that they immediately imply property (a).

 We start by determining the transition probabilities of $\widehat \X^* $. Let
\[D:= \{w \in \widehat W_{\delta}^* : \nexists u \in V \text{ s.t. } (w,u)\in J \text{ and }P^{*}(w,u)>0 \}. \]
 For all $x \in(V \setminus ( \widehat W_{\delta}^* \cup \widehat U_{\delta}^* )) \cup D$  we set
\begin{equation}
\label{e:J0}
\widehat P^*(x,y) :=\begin{cases}P^{*}(x,y) & y \in V \\
0 & y \in Z \\
\end{cases}.
\end{equation}
Recall that by \eqref{e:sub1} $\widehat P^{*}(z_{a,b},a)=\frac{ \delta-v^*(b)}{v^*(a)-v^*(b)}>0 $ and $\widehat P^{*}(z_{a,b},b)=\frac{ v^*(a)-\delta}{v^*(a)-v^*(b)}>0 $ for all $(a,b) \in J$. In accordance with property (c) we set $\widehat P^{*}(a,b)=0=\widehat P^{*}(b,a) $ for all $a,b$ such that  $(a,b) \in J$. 

We now define the transition probabilities of $\widehat P^*(a,\bullet) $ for $a \in \widehat W_{\delta}^* \setminus D$ and then in a similar manner also  for $b \in \widehat U_{\delta}^*$.
Fix some $a \in \widehat W_{\delta}^* \setminus D$. For all $y \in V $ such that $P^*(a,y)=0 $ and $(a,y) \notin J $ (resp.\  $(a,y) \in J $) we define $\widehat P^*(a,y)=0$ (resp.\ $\widehat P^*(a,z_{a,y})=0$). By the definition of $D$  there exists some   $b$ such that $(a,b) \in J$ and $P^*(a,b)>0 $. Fix one such $b=b(a)$.   For all $y \in V  $ such that $P^*(a,y)>0 $ and $(a,y) \notin J $ we define $\widehat P^*(a,y)$ and $\widehat P^*(a,z_{a,b})$ so that\footnote{Note that there are two undetermined probabilities in \eqref{e:J3}, $\widehat P^*(a,y)$ and $\widehat P^*(a,z_{a,b}) $, however, crucially, for every fixed $a$ we always use the same $b$ in the r.h.s.\ of \eqref{e:J3} and \eqref{e:J4}.}
\begin{equation}
\label{e:J3}
\widehat P^*(a,y)/P^{*}(a,y) =\frac{\widehat P^*(a,z_{a,b})\widehat P^*(z_{a,b},b)}{P^{*}(a,b) }  \end{equation}
and for all  $y \in V $ such that $P^*(a,y)>0 $ and $(a,y) \in J$ we define $\widehat P^*(a,z_{a,y}) $ so that
\begin{equation}
\label{e:J4}
\widehat P^*(a,z_{a,y})\widehat P^*(z_{a,y},y)/P^{*}(a,y)=\frac{\widehat P^*(a,z_{a,b})\widehat P^*(z_{a,b},b)}{P^{*}(a,b) }
\end{equation}
 (recall that $\widehat P^*(z_{a,y},y)$ and $P^*(z_{a,b},b) $ have already been determined). Using \eqref{e:sub1} and normalizing so that \[\sum_{y \in V :(a,y) \in J}\widehat P^*(a,z_{a,y}) +\sum_{y \in V  :(a,y) \notin J} \widehat P^*(a,y)=1\] the transition probabilities of $\widehat P^*(a,\bullet) $ are determined. More explicitly, we get that
\[1=\frac{\widehat P^*(a,z_{a,b})\widehat P^*(z_{a,b},b)}{P^{*}(a,b) } L(a), \]
where $L(a):=  \sum_{y :(a,y) \notin J}P^{*}(a,y) + \sum_{y:(a,y) \in J}\frac{P^{*}(a,y)}{\widehat P^*(z_{a,y},y)}$. Observe that by \eqref{e:sub1} $L(a) \le \frac{v^*(a)}{v^*(a)-\delta} < \infty $. Hence we can set $\widehat P^*(a,z_{a,b}):=\frac{P^{*}(a,b) }{ L(a) \widehat P^*(z_{a,b},b)} $ (where $\widehat P^*(z_{a,b},b)$ is already defined in \eqref{e:sub1}) and then define $\widehat P^*(a,\bullet)$ using \eqref{e:J3}-\eqref{e:J4}. 

Similarly, for every $b \in \widehat U_{\delta}^* $ we fix some $a=a(b) \in \widehat W_{\delta}^* $ such that $P^*(b,a)>0$ (note that by construction there exists such $a$).  For all $y \in V $ such that $P^*(b,y)=0 $ and $(y,b) \notin J $  we define $\widehat P^*(b,y)=0$. For all $y \in V $ such that $P^*(b,y)>0 $ and $(y,b) \notin J $ (i.e.\ $P^*(b,y)>0$ and $y \notin \widehat W_{\delta}^* $) we define $\widehat P^*(b,y)$ so that
\begin{equation}
\label{e:J1}
\widehat P^*(b,y) /P^{*}(b,y) =\frac{\widehat P^*(b,z_{a,b})\widehat P^*(z_{a,b},a)}{P^{*}(b,a) }  \end{equation}
and for all  $y $ such that $(y,b) \in J$ (and so by construction  $P^*(b,y)>0 $) we define $\widehat P^*(b,z_{y,b})$ and $\widehat P^*(b,z_{a,b})$ so that
\begin{equation}
\label{e:J2}
\widehat P^*(b,z_{y,b})\widehat P^*(z_{y,b},y)/P^{*}(b,y)=\frac{\widehat P^*(b,z_{a,b}) \widehat P^*(z_{a,b},a)}{P^{*}(b,a) }, \end{equation}
As before, using \eqref{e:sub1} and normalizing so that $\sum_{y \in \widehat W_{\delta}^* :P^*(b,y)>0 }\widehat P^*(b,z_{y,b}) +\sum_{y \in V \setminus \widehat W_{\delta}^* }\widehat P^*(b,y)=1$ the transition probabilities away from $b$ are determined. 

We now verify \eqref{e:d3}-\eqref{e:d4}. If $x \in (V \setminus (\widehat W_{\delta}^* \cup \widehat U_{\delta}^* )) \cup D $ then \eqref{e:d3} holds trivially by \eqref{e:J0}. 
Let $a \in \widehat W_{\delta}^* \setminus D $ and let $b=b(a)$ be as above.
Observe that \eqref{e:J3} in conjunction with \eqref{e:J4} imply that  $\frac{\widehat \Pr_a^*[\widehat X_{T^*}^*=y]}{\widehat \Pr_a^*[\widehat X_{T^*}^*=b,\widehat X_{T^*-1}^*=z_{a,b}]}=\frac{P^{*}(a,y)}{P^{*}(a,b)}  $ for all  $y \in V  $ such that $(a,y) \notin J $ and  that $\frac{\widehat \Pr_a^*[\widehat X_{T^*}^*=y,\widehat X_{T^*-1}^*=z_{a,y}]}{\widehat\Pr_a^*[\widehat X_{T^*}^*=b,\widehat X_{T^*-1}^*=z_{a,b}]}=\frac{P^{*}(a,y)}{P^{*}(a,b)}  $ for all  $y \in V $ such that $(a,y) \in J$. Summing over all $y$ we get that $\frac{1}{\widehat \Pr_a^*[\widehat X_{T^*}^*=b,\widehat X_{T^*-1}^*=z_{a,b}]}=\frac{1}{P^{*}(a,b)} $ and so  \eqref{e:d3}-\eqref{e:d4} indeed hold for $x \in \widehat W_{\delta}^* $ for all $y$. Similarly, the case that $x \in \widehat U_{\delta}^*$ follows from  \eqref{e:J1} in conjunction with \eqref{e:J2} in the roles of  \eqref{e:J3} and \eqref{e:J4} from the analysis of the previous case.   

For every  $u \in V $ let  \[\beta_u:=\sum_{z  \in Z } \widehat P^{*}(u,z)\widehat P^{*}(z,u)=\sum_{z \in Z } \widehat P(u,z)\widehat P(z,u).\]
Note that $\beta_u=0 $ if $u  \notin ( \widehat W_{\delta}^* \setminus D) \cup \widehat U_{\delta}^* $. Let   $u \in ( \widehat W_{\delta}^* \setminus D) \cup \widehat U_{\delta}^* $. Observe that \[\widehat \Pr_u^*[T^*<T_{u}^+]=1-\beta_u.\] By \eqref{e:d3}-\eqref{e:d4} and the ``craps principle" \cite[p.\ 210]{pitman},    for all $u' \in V $ such that $(u,u') \notin J$ and also $(u',u) \notin J$ we have that 
\begin{equation}
\label{e:piu3}
P^{*}(u,u')=\widehat \Pr_u^*[\widehat X_{T^*}^*=u'] =\widehat P^*(u,u')/(1-\beta_u), \end{equation}
while for all $u' \in V $ such that $(u,u') \in J$ or $(u',u) \in J$ we  have that
\begin{equation}
\label{e:piu4}
P^{*}(u,u')=\widehat \Pr_u^*[\widehat X_{T^*}^*=u',\widehat X_{T^*-1}^*=z_{u,u'}]= \widehat P^*(u,z_{u,u'})\widehat P^*(z_{u,u'},u')/(1-\beta_u). 
\end{equation}
%
It is not hard to verify that a stationary measure $\hat \pi$ for $\widehat \X^* $ is given by:  
\begin{equation}
\label{e:piu}
\forall \, u \in V \setminus ( \widehat W_{\delta}^* \cup \widehat U_{\delta}^* ), \qquad \hat \pi (u)=\pi(u), 
\end{equation} 
\begin{equation}
\label{e:piu2}
\forall \, u \in \widehat W_{\delta}^* \cup \widehat U_{\delta}^*, \qquad \hat \pi (u) = \pi(u)/(1-\beta_u) , 
\end{equation}
\begin{equation}
\label{e:piu6}
\forall \, (a,b) \text{ such that }  (a,b) \in J \text{ or }  (b,a) \in  J,\;  \hat \pi(z_{a,b})= \hat \pi(b)\widehat   P^*(b,z_{a,b})+\hat \pi(a)\widehat   P^*(a,z_{a,b}),
\end{equation}
from which (using \eqref{e:piu4} and \eqref{e:piu2} for \eqref{e:piu7}) we see that
\begin{equation}
\label{e:piu7}
\hat \pi(a) \widehat P^{*}(a,z_{a,b})\widehat P^{*}(z_{a,b},b)= \pi(a) P^{*}(a,b),
\end{equation}
 and also (using \eqref{e:piu3} and \eqref{e:piu2}) that if $(x,y) \in V^2$ is such that $(x,y) \notin J$ and $(y,x) \notin J$ then
\begin{equation}
\label{e:piu5}
\widehat \pi(x) \widehat P^*(x,y)=\pi(x)P^*(x,y).
\end{equation}

Indeed, while \eqref{e:piu}-\eqref{e:piu5} can be derived via  probabilistic considerations, they can also be verified via a direct calculation (treating \eqref{e:piu}-\eqref{e:piu6} as a guess and then verifying that $\hat \pi \widehat P^*= \hat \pi $) which we now carry out. Indeed, if we define $\hat \pi$ via \eqref{e:piu}-\eqref{e:piu6} then it is straightforward to check that \eqref{e:piu7}-\eqref{e:piu5} hold. For $u \in (V \setminus ( \widehat W_{\delta}^* \cup \widehat U_{\delta}^* ))\cup D  $ by  \eqref{e:piu} and \eqref{e:piu5} we have 
\[\sum_{x\in \hat V }\hat \pi (x) \widehat P^*(x,u)=\sum_{x\in  V }\hat \pi (x) \widehat P^*(x,u)= \sum_{x\in V } \pi (x)  P^*(x,u)  =\pi(u), \]
while for  $a \in   \widehat W_{\delta}^* \setminus D  $   by \eqref{e:piu5},  \eqref{e:piu6} and \eqref{e:piu7}  (used, respectively, in the first, second and third equations below) we have that  
\[\sum_{x\in \hat V }\hat \pi (x) \widehat P^*(x,a)= \sum_{x\in V \setminus \widehat U_{\delta}^* } \pi (x)  P^*(x,a) +\sum_{z\in Z} \hat \pi (z) \widehat P^*(z,a) \]
\[=\sum_{x\in V \setminus \widehat U_{\delta}^* } \pi (x)  P^*(x,a)+ \sum_{b \in \widehat U_{\delta}^* } \hat \pi (b) \widehat   P^*(b,z_{a,b})\widehat    P^*(z_{a,b},a)+ \hat \pi (a) \sum_{z\in Z}  \widehat  P^*(a,z) \widehat  P^*(z,a) \]
 \[=\sum_{x \in V} \pi(x) P^*(x,a)+\hat \pi (a) \beta_a=\pi(a)+\hat \pi (a) \beta_a= \hat \pi (a).  \]
The case that $u \in \widehat U_{\delta}^*$ is similar and is omitted. 

Using the fact that $(\widehat P^*)^*=\widehat P $ we can now determine the transition probabilities of $\widehat P $. Properties (b)-(c) hold by construction of $\widehat P^*$. By \eqref{e:piu} and \eqref{e:piu5}  we have that property (g) holds, as for all $(x,y) \in ( (V \setminus ( \widehat W_{\delta}^* \cup \widehat U_{\delta}^* )) \cup D) \times  V $ we have that
 \[\widehat P(x,y) =\frac{\hat \pi(y) \widehat P^*(y,x)}{\hat \pi(x)}=\frac{\hat \pi(y) \widehat P^*(y,x)}{ \pi(x)}=\frac{ \pi(y)  P^*(y,x)}{ \pi(x)}=P(x,y).\] Property (h) follows from \eqref{e:piu7}-\eqref{e:piu5}. Property (f) follows from \eqref{e:piu}-\eqref{e:piu2}. We now verify that property (d) holds. We prove \eqref{e:d1}, leaving \eqref{e:d2} as an exercise. If $(x,y) \notin J$ and $(y,x) \notin J$ then by the crabs principle and property (h) (used  in the first and last equalities, respectively)
\[\widehat \Pr_x[\widehat X_T=y]=\frac{\widehat P(x,y)}{1-\beta_x}=\frac{\hat \pi(x) \widehat P(x,y)}{\hat \pi(x) (1-\beta_x)} =\frac{\hat \pi(x) \widehat P(x,y)}{ \pi(x)}=P(x,y), \]
while if $(x,y) \in J$ or $(y,x) \in J$ then the same reasoning yields that
\[\widehat \Pr_x[\widehat X_T=y]=\frac{\widehat P(x,z_{x,y})\widehat P(z_{x,y},y)}{1-\beta_x} =\frac{ \hat \pi (x) \widehat P(x,z_{x,y})\widehat P(z_{x,y},y)}{\pi(x) } =P(x,y). \]
Property (e) is a consequence of (d). This concludes the proof. \qed
\section{Proof of Proposition \ref{p:main}}
\label{s:proof}
The following lemma is (essentially) borrowed from \cite{traces}. The proofs of some of the parts are identical to the corresponding proofs in \cite{traces}, while others are different due to the lack of reversibility assumption.
\begin{lemma}
\label{lem:v}
 Let $\mathbf{X}$ be a transient irreducible Markov chain admitting a stationary measure on a countable state space $V$. Let $o \in V$. Then
\begin{itemize}
\item[(i)]
$\inf_{x \in V}v(x)=0$ (where $v=v_{o}$).
\item[(ii)]
The restriction of the network $(G(\X),c_{\sss} )$ to $A_{\delta}$ (where $c_{\sss}(\cdot,\cdot)$ are the edge weights of the additive symmetrization $\X^{\sss}$, defined in \eqref{e:cs}) is recurrent for all $\delta \in (0,1)$.
\item[(iii)] The restriction of the network $(G(\X),\mathbb{E}_o[N] )$ to $A_{\delta}^*$ is recurrent for all $\delta \in (0,1)$.

\item[(iv)]
$v(X_n) \to 0 $ almost surely.
\item[(v)] Let  $\X^{\mathbb{E}[N]}=(X_n^{\mathbb{E}[N]})_{n=0}^{\infty} $ be the random walk on the network $(G(\X),\mathbb{E}_o[N])$. If it is transient then $v^{*}(X_n^{\mathbb{E}[N]}) \to 0 $ a.s..
\end{itemize}
\end{lemma}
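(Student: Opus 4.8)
The plan is to handle the five parts in the order (i) and (iv), then (ii), then (iii), then (v), since each later part leans on the earlier ones. Parts (i) and (iv) are soft and independent of capacities. First I would note that $v=v_o$ is harmonic for $P$ off $o$, bounded by $1$, and $(I-P)v(o)=\Pr_o[T_o^{+}=\infty]>0$ by transience, so $Pv\leq v$ on all of $V$ and $(v(X_n))_n$ is a bounded nonnegative supermartingale under $\Pr_o$, hence converges a.s.\ to some $L\geq0$. Since $\mathbb{E}_o[v(X_n)]=\Pr_o[\exists\,m\geq n:\,X_m=o]\downarrow\Pr_o[o\text{ visited i.o.}]=0$ (this is \eqref{e:i.o.}), Fatou gives $\mathbb{E}_o[L]=0$, so $L=0$ a.s.; this is (iv). For (i), from $\mathbb{E}_o[v(X_n)]\to0$ and $v\geq0$ there is, for each $\varepsilon>0$, an $n$ and a state $x$ reachable from $o$ with $v(x)<\varepsilon$, so $\inf_V v=0$.

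For part (ii) the key observation is that part (iv) says the chain a.s.\ leaves $A_\delta$ forever eventually, so a.s.\ $\{n:\,X_n\in A_\delta\}$ is finite. The subnetwork $(A_\delta,c_{\sss}\!\upharpoonleft\!A_\delta)$ is locally finite ($\sum_{y\in A_\delta}c_{\sss}(x,y)\leq\pi(x)$) and its walk is irreducible, since $G(\X)\!\upharpoonright\!A_\delta$ is connected by Lemma \ref{lem:connected} and $c_{\sss}(x,y)=\tfrac12[\pi(x)P(x,y)+\pi(y)P(y,x)]>0$ on every edge of $G(\X)$; so recurrence of that walk is the statement $\mathrm{Cap}(\{o\};c_{\sss}\!\upharpoonleft\!A_\delta)=0$. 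Fixing a finite exhaustion $V_1\subseteq V_2\subseteq\cdots$ of $V$, I would then chain together \eqref{e:cap4} (in the subnetwork), Rayleigh's monotonicity in the conductances (since $c_{\sss}\!\upharpoonleft\!A_\delta\leq c_{\sss}$), and \eqref{e:cap5} to get
\[
\mathrm{Cap}(\{o\};c_{\sss}\!\upharpoonleft\!A_\delta)=\lim_{m\to\infty}\mathrm{Cap}(\{o\},A_\delta\setminus V_m;\,c_{\sss}\!\upharpoonleft\!A_\delta)\leq\lim_{m\to\infty}\mathrm{Cap}(\{o\},A_\delta\setminus V_m;\,c_{\sss})\leq\lim_{m\to\infty}\mathrm{Cap}(\{o\},A_\delta\setminus V_m),
\]
where the last capacity is taken with respect to $\X$ itself and hence equals $\pi(o)\Pr_o[T_o^{+}>T_{A_\delta\setminus V_m}]$ by \eqref{e:cap}. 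Because a.s.\ only finitely many states of $A_\delta$ are visited, $\Pr_o[T_{A_\delta\setminus V_m}<T_o^{+}]\leq\Pr_o[T_{A_\delta\setminus V_m}<\infty]\to0$, so the displayed bound forces $\mathrm{Cap}(\{o\};c_{\sss}\!\upharpoonleft\!A_\delta)=0$.

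Part (iii) I would reduce to (ii) applied to the time-reversal: $A_\delta^{*}(\X)=A_\delta(\X^{*})$, and both $c_{\sss}$ and $G(\X)$ are unchanged under $P\leftrightarrow P^{*}$, so $(A_\delta^{*},c_{\sss}\!\upharpoonleft\!A_\delta^{*})$ is recurrent; then \eqref{e:N} together with the companion lower bound $\mathbb{E}_o[N(x,y)]\geq\alpha[\pi(x)P(x,y)+\pi(y)P(y,x)]\min\{v^{*}(x),v^{*}(y)\}=2\alpha c_{\sss}(x,y)\min\{v^{*}(x),v^{*}(y)\}$ shows that on edges inside $A_\delta^{*}$ one has $2\alpha\delta\,c_{\sss}(x,y)\leq\mathbb{E}_o[N(x,y)]\leq2\alpha\,c_{\sss}(x,y)$, so the two networks on $A_\delta^{*}$ are recurrent together. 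For part (v), using \eqref{e:N} and \eqref{e:green}--\eqref{e:alpha} I would record the representation $\hat P(x,y)=\tfrac12P(x,y)+\tfrac12\tfrac{v^{*}(y)}{v^{*}(x)}P^{*}(x,y)$ for $x\neq o$ of the transition kernel $\hat P$ of $\X^{\mathbb{E}[N]}$; since $P^{*}v^{*}=v^{*}$ off $o$, Jensen gives $\sum_y\tfrac{v^{*}(y)}{v^{*}(x)}P^{*}(x,y)v^{*}(y)=\tfrac1{v^{*}(x)}P^{*}\big((v^{*})^{2}\big)(x)\geq v^{*}(x)$, and a short by-hand computation in the spirit of (iv)/Lemma \ref{lem:connected} then shows $v^{*}(\X^{\mathbb{E}[N]}_n)$ is a bounded submartingale after the (a.s.\ finite, by transience) last visit to $o$, hence converges a.s.\ to some $L\geq0$. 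If $\Pr[L\geq\delta]>0$ then with positive probability $\X^{\mathbb{E}[N]}$ is eventually confined to $A_{\delta/2}^{*}$; but by part (iii) the subnetwork on $A_{\delta/2}^{*}$ is recurrent, and it has a nonempty set of vertices from which $\hat P$ moves out of $A_{\delta/2}^{*}$ with positive probability (as $G(\X)$ is connected and $\mathbb{E}_o[N]>0$ on edges), so a ``recurrent chain plus positive killing at a nonempty set'' argument shows the walk a.s.\ leaves $A_{\delta/2}^{*}$, and since the same applies from every later time, $\Pr[\X^{\mathbb{E}[N]}_n\in A_{\delta/2}^{*}\ \text{for all large }n]=0$ --- a contradiction. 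Hence $L=0$ a.s.

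The hard part is (ii) and (v): this is exactly where the reversible argument of \cite{traces}, which manipulates electrical networks and the Dirichlet principle directly, must be rerun without reversibility. In (ii) the replacement is the Gaudilli\`ere--Landim capacity calculus \eqref{e:cap2}--\eqref{e:cap5} (used, via the exhaustion, in place of moving charges around a network), and in (v) it is the representation $\hat P=\tfrac12P+\tfrac12(P^{*})^{v^{*}}$ relating the $\mathbb{E}_o[N]$-walk to $\X$ and $\X^{*}$. Of the routine-looking steps, the one I expect to cost the most is verifying the sub/super-harmonicity of $v^{*}$ for $\hat P$ away from $o$ in (v) (equivalently, pinning down the sign of $Pv^{*}-v^{*}$ off $o$), which has to be done by an explicit calculation rather than quoted.
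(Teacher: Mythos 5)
Parts (i), (iii) and (iv) are correct and essentially identical to the paper's proofs. Your part (ii) is also correct but follows a genuinely different route: the paper conditions on return to $o$, invokes Theorem \ref{thm:AG} for the recurrent Doob transform, and compares $\tilde c_{\sss}$ with $c_{\sss}$ on $A_{\delta}$, whereas you combine part (iv) (only finitely many states of $A_{\delta}$ are visited) with the exhaustion identity \eqref{e:cap4}, Rayleigh monotonicity, the symmetrization bound \eqref{e:cap5}, and the probabilistic formula \eqref{e:cap} to force $\mathrm{Cap}(\{o\};c_{\sss}\upharpoonright A_{\delta})=0$ directly. That shortcut is legitimate and arguably lighter, since it does not need Theorem \ref{thm:AG} at all (it still uses the Gaudilli\`ere--Landim inequality \eqref{e:cap5}).

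The genuine gap is in part (v). Your representation $\hat P(x,y)=\tfrac12 P(x,y)+\tfrac12\tfrac{v^{*}(y)}{v^{*}(x)}P^{*}(x,y)$ for $x\neq o$ is correct (it follows from \eqref{e:N} and $P^{*}$-harmonicity of $v^{*}$ off $o$), and Jensen does make the Doob half subharmonic for $v^{*}$; but the other half has no sign: $Pv^{*}-v^{*}$ off $o$ has no consistent sign in general, so $v^{*}(X_n^{\mathbb{E}[N]})$ is neither a sub- nor a supermartingale, and no ``explicit calculation'' can rescue this. Concretely, if $x\neq o$ is entered (forward) only from $o$, then $v^{*}(x)=1$ and the Doob term at $x$ equals exactly $1$, so the submartingale inequality at $x$ would force $v^{*}\equiv1$ on all forward neighbours of $x$; a transient chain in which such an $x$ has a forward neighbour that also receives transitions from far away (hence has $v^{*}<1$) makes $v^{*}(\widehat X)$ a strict supermartingale at $x$, while at other states the inequality can go the opposite way. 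This is precisely why the paper does not argue via optional stopping: it shows $v^{*}$ has finite Dirichlet energy w.r.t.\ $c_{\sss}$ (Proposition \ref{p:Denergyofv} applied to the time-reversal), transfers this to the network $(G(\X),\mathbb{E}_o[N])$ via \eqref{e:N}, and then invokes the Ancona--Lyons--Peres theorem \cite{ALP}, which guarantees a.s.\ convergence of any finite-energy function along the walk of a transient reversible network -- that theorem is the missing ingredient in your argument. Once convergence is known, identifying the limit as $0$ via part (iii) is as you describe, though the cleaner ending (used in the paper) is that on the confinement event the coupled restricted walk visits $o$ infinitely often, contradicting the assumed transience of $\X^{\mathbb{E}[N]}$, rather than a killing-at-the-boundary argument, which would additionally require exit probabilities bounded away from zero.
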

\begin{proof}
By transience $0=\Pr_o[o \text{ is visited infinitely often}]=\Pr_o[\cap_{n \ge 0}\cup_{m \ge n}\{ X_m=o \} ]$. Thus  
\begin{equation}
\label{e:i.o.}
\begin{split}
0 & = \Pr_o[o \text{ is visited infinitely often}] = \lim_{n \to \infty} \Pr_o[X_m=o \text{ for some }m \ge n ] \\ & = \lim_{n \to \infty}\mathbb{E}_o[v(X_n)] \ge \inf_{x \in V}v(x).
\end{split}
\end{equation}

Thus $\inf_{x \in V}v(x)=0$. Fix some $\delta \in (0,1)$. 
We now prove Part (ii).  The  Doob-transformed Markov chain corresponding to the conditioning of returning to $o$ has transition probabilities $ Q (x,y):=\frac{ v(y)}{v(x)}P(x,y) $ for $x \neq o$ (while $ Q(o,y):=P(o,y)v(y)/\sum_x P(o,x)v(x) $ for all $y$) and stationary measure $\tilde \pi(x):=\pi(x) v(x)$. This chain is obviously recurrent. By Theorem \ref{thm:AG} its additive symmetrization, whose edge weights are given by 
\begin{equation*}
\begin{split}
\tilde c_{\sss}(x,y)&=\half [\tilde \pi(x) Q(x,y)+\tilde \pi(y)Q(y,x)]=\half [\pi(x)P(x,y)v(y)+\pi(y)P(y,x)v(x)] \\ &  \ge c_{\sss}(x,y) \min \{ v(x),v(y)\} ,
\end{split}
\end{equation*}
is also recurrent.  By Rayleigh's monotonicity principle, its restriction to $A_{\delta} $ is recurrent as well. Since for $x,y \in A_{\delta}$ we have that $  c_{\sss}(x,y) \le \delta^{-1} \tilde c_{\sss}(x,y) $, it follows that the restriction of $\X^{\sss}$ to $A_{\delta} $ is also recurrent. This concludes the proof of Part (ii).

We now prove Part (iii). By Part (ii) and symmetry\footnote{Throughout we use the symmetry between claims regarding $\X$ and the corresponding claims for $\X^*$.} the restriction of $(G(\X),c_{\sss} )$ to $A_{\delta}^*$ is also recurrent. By \eqref{e:N}  the edge weights  of the restriction of the network $(G(\X),\mathbb{E}_o[N] )$ to $A_{\delta}^*$ are (edge-wise) within a constant factor of those of the restriction of $(G(\X),c_{\sss} )$ to $A_{\delta}^*$. Hence the former must also be recurrent.

We now prove Part (iv). Observe that $v(X_n) $ is a non-negative super-martingale and hence converges to a limit a.s.\ and in $L_1$. Since $v(X_n)  \in [0,1]  $ for all $n$, its limit also belongs to $[0,1]$. By \eqref{e:i.o.}   $\mathbb{E}_o[v(X_n)]\to 0 $. Hence the limit of $v(X_n)$ must be 0 a.s.. 

We now prove Part (v). Assume that $\X^{\mathbb{E}[N]}$ is transient. By Proposition \ref{p:Denergyofv} (and symmetry) $v^*$ has finite Dirichlet energy w.r.t.\ the network $(V,E,c_{\sss})$. By \eqref{e:N}  $v^*$ has finite Dirichlet energy also w.r.t.\ the network $(G(\X),\mathbb{E}_{o}[N])$. By  Ancona, Lyons and Peres \cite{ALP} $\lim_{n \to \infty} v^{*}(X_n^{\mathbb{E}[N]})$ exists a.s.. By Part (iii) the limit must equal 0 a.s.. 

We now explain this in more detail. Assume towards a contradiction that for some $\eps>0$, $\mathbb{P}_o[\lim_{n \to \infty} v^{*}(X_n^{\mathbb{E}[N]})>\eps]>0 $.  Then there is some $x$ so that  $\mathbb{P}_x[\inf_{n \ge 0 } v^{*}(X_n^{\mathbb{E}[N]}) \ge \eps/2]>0 $. Started from $x$ we can couple $X_n^{\mathbb{E}[N]} $ with its restriction to $A_{\eps/2}^* $ until the first time the former leaves this set. It follows that with positive probability, in this coupling the two chains are equal to each other for all times. However, by Part (iii) the latter a.s.\ visits $o$ infinitely often. We get that if $\X^{\mathbb{E}[N]} $ is transient and  $\mathbb{P}_o[\lim_{n \to \infty} v^{*}(X_n^{\mathbb{E}[N]}) = 0 ]<1 $ then with positive probability it would visit $o$ infinitely often, a contradiction!    
\end{proof}
The following lemma will later be used with $B=A_{\delta}^* $ and also on the auxiliary chain from \S\ref{s:aux} with  $B=A_{\delta}^* \cup Z $.
\begin{lemma}
\label{lem:rest}
Let $H=(V,E)$ be a graph. Let $(H,c)$ be a transient network with $c(e)>0$ for all $e \in E$. Let $B \subset V $ be such that (1) the induced subgraph $H \upharpoonleft B$ is connected and (2) the network random walk on
$H$ starting at any $b \in B $ visits $B$ only finitely many times a.s.. Let $\pd_{\vv}^{\mathrm{int}}B:=\{b \in B: c(b,x)>0 \text{ for some }x \notin B \}$ and $D:=\pd_{\vv}^{\mathrm{int}}B \cup B^{c} $. Then
\[\mathrm{Cap}(B;c )=\mathrm{Cap}(\pd_{\vv}^{\mathrm{int}}B;c )=\mathrm{Cap}(\pd_{\vv}^{\mathrm{int}}B;c \upharpoonleft D ). \]
\end{lemma}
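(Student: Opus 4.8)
The three quantities obviously satisfy
\[
\mathrm{Cap}(\pd_{\vv}^{\mathrm{int}}B;\,c\upharpoonleft D)\;\le\;\mathrm{Cap}(\pd_{\vv}^{\mathrm{int}}B;\,c)\;\le\;\mathrm{Cap}(B;\,c):
\]
the left inequality is Rayleigh's monotonicity principle, since $c\upharpoonleft D$ is obtained from $c$ by setting to $0$ the conductances of all edges incident to $B^{\circ}:=B\setminus\pd_{\vv}^{\mathrm{int}}B$ — and by the definition of $\pd_{\vv}^{\mathrm{int}}B$ these edges all lie inside $B$, so $c\upharpoonleft D$ and $c$ agree off $B$ — while the right inequality is \eqref{e:cap3} together with $\pd_{\vv}^{\mathrm{int}}B\subseteq B$. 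So the entire content of the lemma is the reverse bound $\mathrm{Cap}(B;c)\le\mathrm{Cap}(\pd_{\vv}^{\mathrm{int}}B;\,c\upharpoonleft D)$. Write $W:=\pd_{\vv}^{\mathrm{int}}B$, so $D=W\cup B^{c}$, and let $\Pr_{a}$ and $\Pr_{a}^{D}$ denote the laws of the network random walks started at $a$ on $(H,c)$ and on $(D,c\upharpoonleft D)$, respectively.

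The plan is to compute $\mathrm{Cap}(B;c)$ exactly from the probabilistic definition of capacity, via a last-exit decomposition. Fix a finite set $C\subseteq B$. On the event $\{T_{C}^{+}=\infty\}$ for the $(H,c)$-walk started at $a\in C$, hypothesis (2) ensures that $B$ is visited only finitely often, so the last visit $L:=\max\{n\ge0:X_{n}\in B\}$ to $B$ is a.s.\ well defined, and $X_{L+1}\notin B$ forces $X_{L}\in W$. Decomposing according to the values of $L$ and of $X_{L}=w$, applying the Markov property at time $L$, and then using reversibility to reverse the trajectory up to time $L$ (for a path $a=x_{0},\dots,x_{\ell}=w$ one has $c_{a}\prod_{i}P(x_{i},x_{i+1})=c_{w}\prod_{i}P(x_{i+1},x_{i})$), one is led to the identity
\[
\mathrm{Cap}(C;c)\;=\;\sum_{w\in W}c_{w}\,\Pr_{w}[T_{B}^{+}=\infty]\,\Pr_{w}[T_{C}<\infty].
\]
Letting $C$ increase to $B$ along an exhausting sequence of finite sets, the left side increases to $\mathrm{Cap}(B;c)$, while $\Pr_{w}[T_{C}<\infty]\uparrow1$ for every $w\in W\subseteq B$, so by monotone convergence
\[
\mathrm{Cap}(B;c)\;=\;\sum_{w\in W}c_{w}\,\Pr_{w}[T_{B}^{+}=\infty].
\]
(In other words, hypothesis (2) is precisely what makes the ``equilibrium'' formula for $\mathrm{Cap}(B)$ valid, even though $B$ is typically infinite, in which case such a formula may fail.)

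It remains to bound the right side by $\mathrm{Cap}(W;c\upharpoonleft D)$. For $w\in W$, on $\{T_{B}^{+}=\infty\}$ the walk steps into $B^{c}$ at time $1$ and stays in $B^{c}$ afterwards, hence never visits $B^{\circ}$; and from any vertex of $B^{c}$ the $(H,c)$-walk and the $(D,c\upharpoonleft D)$-walk have identical transition probabilities (because $B^{c}$ has no edges to $B^{\circ}$), so they agree in law on the respective events of avoiding $B$ and avoiding $W$. Decomposing each walk on its first step out of $w$ therefore gives
\[
c_{w}\,\Pr_{w}[T_{B}^{+}=\infty]\;=\;(c\upharpoonleft D)_{w}\,\Pr_{w}^{D}[T_{W}^{+}=\infty]\qquad(w\in W).
\]
Summing over $w\in W$, and then running over finite $W'\subseteq W$ (using $\Pr_{w}^{D}[T_{W}^{+}=\infty]\le\Pr_{w}^{D}[T_{W'}^{+}=\infty]$ and monotone convergence), bounds this sum by $\sup_{W'}\sum_{w\in W'}(c\upharpoonleft D)_{w}\Pr_{w}^{D}[T_{W'}^{+}=\infty]=\mathrm{Cap}(W;c\upharpoonleft D)$. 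Combined with the first display this yields $\mathrm{Cap}(B;c)=\mathrm{Cap}(W;c\upharpoonleft D)$, hence equality of all three quantities.

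The step I expect to be the real obstacle is exactly this passage through infinite sets. The ``morally obvious'' argument — short all of $B$ to a single vertex, observe that $B^{\circ}$ is then dead weight because it connects to the rest of $H$ only through $W$, and conclude $\mathrm{Cap}(B;c)=\mathrm{Cap}(W;c\upharpoonleft D)$ — is simply false without an extra hypothesis: the $\Z^{3}$-slab example recalled just after \eqref{e:cap} shows that $\sum_{a\in A}\pi(a)\Pr_{a}[T_{A}^{+}=\infty]$ need not equal $\mathrm{Cap}(A)$ for infinite $A$. Hypothesis (2) is what rescues the statement, and the last-exit/reversibility computation above is the tool that converts the finite-$C$ definition of capacity into a statement about the infinite set $B$ while keeping explicit control of the escape route through $W$. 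Doing the path-reversal bookkeeping carefully (in particular the $\ell=0$ and $w\in C$ corner cases) and checking that the monotone limits are legitimate is where the actual work lies.
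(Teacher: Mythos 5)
Your proof is correct, and it takes a genuinely different route from the paper's for the nontrivial inequality. The paper proves the two equalities separately: first $\mathrm{Cap}(B;c)=\mathrm{Cap}(\pd_{\vv}^{\mathrm{int}}B;c)$ by a finite-approximation argument (given a finite $A\subseteq B$, it picks a finite $W\subseteq \pd_{\vv}^{\mathrm{int}}B$ hit with probability close to $1$ from the points of $A\setminus \pd_{\vv}^{\mathrm{int}}B$ --- this is where hypothesis (2) enters --- and bounds $\mathrm{Cap}(A\cup W)$ termwise); and then $\mathrm{Cap}(\pd_{\vv}^{\mathrm{int}}B;c)=\mathrm{Cap}(\pd_{\vv}^{\mathrm{int}}B;c\upharpoonleft D)$, whose hard direction is delegated to the flow-modification Lemma \ref{lem:russ} (Lyons--Peres, Lemma 9.23), which is also where the connectivity hypothesis (1) gets used. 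You instead prove the single inequality $\mathrm{Cap}(B;c)\le\mathrm{Cap}(\pd_{\vv}^{\mathrm{int}}B;c\upharpoonleft D)$ head-on: the last-exit decomposition plus path reversal yields the equilibrium formula $\mathrm{Cap}(B;c)=\sum_{w\in \pd_{\vv}^{\mathrm{int}}B} c_w\Pr_w[T_B^+=\infty]$ (I checked the $\ell=0$ and $w\in C$ corner cases and the two monotone limits; they go through), and the conversion $c_w\Pr_w[T_B^+=\infty]=(c\upharpoonleft D)_w\Pr_w[T_{\pd_{\vv}^{\mathrm{int}}B}^{+}=\infty \text{ in } (D,c\upharpoonleft D)]$ is valid because every edge discarded by the restriction has both endpoints in $B$, so the two walks can be coupled to coincide while in $B^{c}$, and entry into $B$ from $B^{c}$ necessarily occurs at $\pd_{\vv}^{\mathrm{int}}B$. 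What each approach buys: the paper's is modular and leans on an existing flow lemma, at the price of needing connectivity of $H\upharpoonright B$; yours is more elementary and self-contained (no flows, no Lemma \ref{lem:russ}), produces an explicit formula for $\mathrm{Cap}(B;c)$, and never uses hypothesis (1), so it actually proves the lemma without the connectedness assumption. Both arguments rely on reversibility (yours through path reversal, the paper's through the Dirichlet/current-flow machinery) and both use hypothesis (2) in the same essential way, namely that escape from $B$ is funneled through $\pd_{\vv}^{\mathrm{int}}B$ and that the walk a.s.\ leaves $B$ for good.
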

\begin{proof}
Since $\pd_{\vv}^{\mathrm{int}}B\subseteq B$ by \eqref{e:cap3} we have that $\mathrm{Cap}(B) \ge \mathrm{Cap}(\pd_{\vv}^{\mathrm{int}}B) $. Conversely, fix some finite $A \subseteq B $. Let $\eps >0$. By assumption (2) we can pick a finite $W$ such that $A \cap  \pd_{\vv}^{\mathrm{int}}B \subseteq  W \subseteq \pd_{\vv}^{\mathrm{int}}B$ and for all $a \in A_1:= A \setminus \pd_{\vv}^{\mathrm{int}}B $ we have that $\Pr_a[T_{W}=\infty]<\frac{\eps }{|A| \max_{a \in A}  \pi(a)  }$. Then
\begin{equation*}
\begin{split}
\mathrm{Cap}(A  )& \le \mathrm{Cap}(A \cup W ) \le \sum_{a \in A_1  }\pi(a) \Pr_a[T_{ W}=\infty]+ \sum_{w \in W  }\pi(w) \Pr_w[T_{A \cup W}^{+}=\infty] \\ & \le \eps + \sum_{w \in W  }\pi(w) \Pr_w[T_{A \cup W}^{+}=\infty] \le \eps + \sum_{w \in W }\pi(w) \Pr_w[T_{ W}^{+}=\infty] \\ & = \eps + \con (W) \le \eps + \mathrm{Cap}(  \pd_{\vv}^{\mathrm{int}}B).  \end{split}
 \end{equation*}
Taking the supremum over all finite $A$ and sending $\eps \to 0$ we get that $\mathrm{Cap}(B) \le \mathrm{Cap}(  \pd_{\vv}^{\mathrm{int}}B) $. To conclude the proof  we now show that indeed $\mathrm{Cap}(  \pd_{\vv}^{\mathrm{int}}B;c)= \mathrm{Cap}(  \pd_{\vv}^{\mathrm{int}}B;c \upharpoonleft D )$. By Rayleigh's monotonicity principle $\mathrm{Cap}(  \pd_{\vv}^{\mathrm{int}}B;c) \ge \mathrm{Cap}(  \pd_{\vv}^{\mathrm{int}}B;c \upharpoonleft D )$. Conversely, let $A$ be some finite subset of $  \pd_{\vv}^{\mathrm{int}}B $. Let $\eps >0$.   By Lemma \ref{lem:russ} below there exists some finite $A' \subseteq   \pd_{\vv}^{\mathrm{int}}B $ such that \[ \mathrm{Cap}(A;c )/(1+ \eps ) \le \mathrm{Cap}(A';c\upharpoonleft D) \le \mathrm{Cap}(\pd_{\vv}^{\mathrm{int}}B;c\upharpoonleft D).   \]
Sending $\eps$ to 0 and taking supremum over all finite $A \subseteq \pd_{\vv}^{\mathrm{int}}B$ concludes the proof.
\end{proof}
The following lemma is a minor reformulation of \cite[Lemma 9.23]{lyons}. One difference is that below we take $A$ to be a finite set and not a singleton. However a standard construction in which the vertices of $A$ are identified to a single vertex can be used to reduce the case that $A$ is a finite set  to the case that $A$ is a singleton.
\begin{lemma}[\cite{lyons} Lemma 9.23]
\label{lem:russ}
Let $H=(V,E)$. Let $(H,c)$ be a transient  network.  Let $A$ be a finite subset of $V $ and $B \subset V $ be
such that (1) $A \subseteq  B$,
\begin{itemize}
\item[(2)] the induced subgraph $H \upharpoonleft B$ is connected, and 
\item[(3)] the  random walk on
$(H,c)$ starting at any $b \in B $ visits $B$ only finitely many times a.s..
\end{itemize}
 Let $i$ be the unit current flow on $H$ from
$A$ to $\infty$ and $\eps > 0$. Then there is a unit flow $\theta$ on $H$ from $A$ to $\infty$ such that $\theta(e) \neq 0$ for only
finitely many edges $e$ incident to $B$ and
\[\sum_{e \in E \setminus (B \times B ) }\theta(e)^2/c(e) \le \eps\sum_{e \in E }i(e)^2/c(e)+ \sum_{e \in E \setminus (B \times B ) }i(e)^2/c(e). \]
In particular, the restriction of $\theta$ to $ E \setminus (B \times B )$ is a unit flow in the restriction of the network  $(H,c)$  to $V \setminus B' $, where $B':=\{b \in B:b \text{ is not adjacent to any vertex in }B^{c} \} $, from the finite set $A':=A \cup \{b \in B \setminus  B':\theta(b,b') \neq 0 \text{ for some }b' \in B' \}  $ to infinity, whose energy is at most $ (1+\eps)\sum_{e \in E }i(e)^2/c(e)=(1+\eps)/\con (A;c) $.  
\end{lemma}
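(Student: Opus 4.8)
The final statement to prove is Lemma~\ref{lem:russ}, which the paper explicitly flags as a minor reformulation of \cite[Lemma 9.23]{lyons}, extended from a singleton $A$ to a finite set $A$. So the plan is essentially a reduction argument followed by a citation, with the bookkeeping for the ``In particular'' clause worked out explicitly.

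\textbf{Reduction to the singleton case.} First I would introduce the standard contraction: form a new network $(\tilde H, \tilde c)$ by identifying all vertices of the finite set $A$ into a single vertex $a_*$, deleting self-loops created at $a_*$ and summing parallel edge weights. Because $A$ is finite and $H$ is transient, $\tilde H$ is still transient (contracting a finite set preserves transience — effective resistance to infinity is unchanged up to the finite part, or one invokes Rayleigh monotonicity since contraction only decreases resistance). The set $B$ maps to a set $\tilde B$ containing $a_*$, and hypotheses (2) and (3) transfer: $\tilde H \upharpoonleft \tilde B$ is connected since $H \upharpoonleft B$ was and $A \subseteq B$; and the network walk on $\tilde H$ from any vertex of $\tilde B$ visits $\tilde B$ finitely often a.s.\ because a walk on $\tilde H$ corresponds to a walk on $H$ (the identified vertex just merges the $A$-visits, and $A \subseteq B$ so finitely many $B$-visits still gives finitely many $\tilde B$-visits). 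Now \cite[Lemma 9.23]{lyons} applies verbatim with the singleton $\{a_*\}$ in the role of $A$ and $\tilde B$ in the role of $B$, producing a unit flow $\tilde\theta$ from $a_*$ to infinity on $\tilde H$ with $\tilde\theta$ nonzero on only finitely many edges incident to $\tilde B$ and the stated energy inequality against the unit current flow $\tilde\imath$ from $a_*$.

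\textbf{Pulling the flow back.} Next I would lift $\tilde\theta$ to a flow $\theta$ on $H$: every edge of $\tilde H$ not incident to $a_*$ corresponds to a unique edge of $H$, and an edge of $\tilde H$ incident to $a_*$ corresponds to a bundle of edges from $A$ into a common endpoint; distribute $\tilde\theta$ across that bundle arbitrarily (say, put it all on one chosen edge), and set $\theta$ to be an arbitrary flow on the edges internal to $A$ that makes $\theta$ a legitimate unit flow from $A$ to infinity — for instance, route everything so that Kirchhoff's node law holds at each vertex of $A$, which is possible since $\theta$ leaves $A$ with total strength $1$. The energy of $\theta$ on edges outside $B\times B$ equals the energy of $\tilde\theta$ on edges outside $\tilde B \times \tilde B$ (internal-$A$ edges lie inside $B\times B$ since $A\subseteq B$, so they don't enter the sum $\sum_{e\notin B\times B}$), and similarly the unit current flow $i$ on $H$ from $A$ to infinity projects to $\tilde\imath$ with the same total energy $\sum_e i(e)^2/c(e) = \con(A;c)^{-1}$. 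So the energy inequality transfers directly to $H$, and $\theta$ is nonzero on only finitely many edges incident to $B$ because $\tilde\theta$ was and only finitely many edges were affected by the contraction.

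\textbf{The ``in particular'' clause.} Finally I would spell out why the restriction of $\theta$ to $E\setminus(B\times B)$ is a unit flow on the restricted network $(H,c)\upharpoonleft (V\setminus B')$ from $A'$ to infinity with energy at most $(1+\eps)/\con(A;c)$. The point is that deleting the edges inside $B\times B$ and the vertices in $B'$ (those $B$-vertices with no neighbor in $B^c$, on which $\theta$ restricted to the complement is trivially zero) leaves node law satisfied at every vertex of $V\setminus B$, while the net flow out of each vertex of $\pd_{\vv}^{\mathrm{int}}B$ is whatever $\theta$ sends along the finitely many edges to $B^c$; collecting these sources gives exactly $A'$, and the total strength is still $1$ since all the original source ($A$) flow must eventually cross from $B$ to $B^c$. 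The energy bound is immediate from the previous step together with $\con(A;c) = (\sum_e i(e)^2/c(e))^{-1}$. The main obstacle — really the only non-routine point — is making the contraction/lifting correspondence precise enough that hypothesis (3) genuinely transfers and the energy accounting is airtight when edge bundles are merged; everything else is bookkeeping, and the mathematical content is entirely in \cite[Lemma 9.23]{lyons}, which we are entitled to cite.
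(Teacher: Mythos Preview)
Your proposal is correct and matches the paper's approach exactly: the paper does not give a proof of Lemma~\ref{lem:russ} at all, but simply cites \cite[Lemma 9.23]{lyons} and remarks (in the sentence preceding the statement) that the extension from a singleton $A$ to a finite $A$ follows by the standard construction of identifying the vertices of $A$ to a single vertex. Your write-up is a faithful and more detailed execution of precisely this reduction, together with a careful unpacking of the ``in particular'' clause that the paper leaves implicit.
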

Recall the auxiliary chain $\widehat \X $ from \S\ref{s:aux} and the  notation from there. In particular, recall that  $ \widehat Z:= Z  \cup W_{\delta}^* \setminus \widehat W_{\delta}^*  $ as in Remark \ref{rem:hatZ} and that $ \widehat c_{\sss}$ are the edge-weights of $\widehat \X^{\sss} $, the additive symmetrization of $\widehat \X $. Recall that capacities w.r.t.~$\widehat \X $ (respectively, $\widehat \X^* $) are denoted by $\widehat \con$ (respectively, $\widehat{ \con_{*}} $).
\begin{corollary}
\label{cor:5.4}
Let $\delta>0$.  Let $\widetilde W_{\delta}:=\{a \in A_{\delta}^* :P^*(a,V_{\delta}^*)>0=P(a,V_{\delta}^*) \}$, $W:=W_{\delta}^* \cup \widetilde W_{\delta} $ and $\widetilde Z:=\widehat Z \cup \widetilde W_{\delta}   $.  Then
\begin{equation}
\label{e:russ1}
\mathrm{Cap}(A_{\delta}^*;\mathbb{E}_o[N])= \mathrm{Cap}(W;\mathbb{E}_o[N] \upharpoonleft W \cup V_{\delta}^*  ),
\end{equation}
\begin{equation}
\label{e:russ2}
\widehat \con (A_{\delta}^* \cup Z ; \widehat c_{\sss}  )=\widehat \con (\widetilde Z ; \widehat c_{\sss}  )=\widehat \con (\widetilde Z ; \widehat c_{\sss} \upharpoonleft \widetilde Z \cup V_{\delta}^*).  \end{equation}
\end{corollary}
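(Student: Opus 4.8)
The plan is to prove Corollary~\ref{cor:5.4} by two applications of Lemma~\ref{lem:rest}, one to the original network $(G(\X),\mathbb{E}_o[N])$ and one to the auxiliary network $(G(\widehat\X),\widehat c_{\sss})$, after verifying in each case that the relevant set plays the role of $B$ in that lemma, and that its internal vertex-boundary is exactly the advertised set $W$ (respectively $\widetilde Z$). Concretely, for \eqref{e:russ1} I would take $H=G(\X)$, $c=\mathbb{E}_o[N]$ and $B=A_{\delta}^*$. The two hypotheses of Lemma~\ref{lem:rest} are: (1) the induced subgraph on $A_{\delta}^*$ is connected --- this is exactly the content of Lemma~\ref{lem:connected} applied to $\X^*$ (the restriction of $G(\X)$ to $A_{\delta}^*$ is connected); and (2) the $\mathbb{E}_o[N]$-network random walk started at any $b\in A_{\delta}^*$ visits $A_{\delta}^*$ only finitely often a.s.\ --- this follows from part (iii) of Lemma~\ref{lem:v}, which says the restriction of $(G(\X),\mathbb{E}_o[N])$ to $A_{\delta}^*$ is recurrent, so the walk on the full network started inside $A_{\delta}^*$ either leaves $A_{\delta}^*$ (after which, since $(G(\X),\mathbb{E}_o[N])$ is transient in the case of interest, it returns only finitely often) or, if it never left, would have to be recurrent in $A_{\delta}^*$, contradicting transience; one has to package this standard dichotomy carefully. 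Then Lemma~\ref{lem:rest} gives $\mathrm{Cap}(A_{\delta}^*;\mathbb{E}_o[N])=\mathrm{Cap}(\pd_{\vv}^{\mathrm{int}}A_{\delta}^*;\mathbb{E}_o[N])=\mathrm{Cap}(\pd_{\vv}^{\mathrm{int}}A_{\delta}^*;\mathbb{E}_o[N]\upharpoonleft D)$ with $D=\pd_{\vv}^{\mathrm{int}}A_{\delta}^*\cup V_{\delta}^*$. So the one remaining point for \eqref{e:russ1} is to identify $\pd_{\vv}^{\mathrm{int}}A_{\delta}^*$ with respect to the edge weights $\mathbb{E}_o[N]$: an edge $\{a,b\}$ with $a\in A_{\delta}^*$, $b\in V_{\delta}^*$ carries positive $\mathbb{E}_o[N]$-weight iff $P(a,b)+P(b,a)>0$ (by the formula \eqref{e:N} and transience, $\mathcal G(o,\cdot)>0$ everywhere), i.e.\ iff $a\in W_{\delta}^*$ (when $P(a,b)>0$) or $a\in\widetilde W_{\delta}$ (when $P(a,b)=0$ but $P(b,a)>0$); hence $\pd_{\vv}^{\mathrm{int}}A_{\delta}^*=W_{\delta}^*\cup\widetilde W_{\delta}=W$, and $D=W\cup V_{\delta}^*$ (noting that for $\mathbb{E}_o[N]$-weights, $V\setminus A_{\delta}^*=V_{\delta}^*$ coincides with the vertices outside $B$), which is exactly \eqref{e:russ1}.

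For \eqref{e:russ2} the strategy is identical with $H=G(\widehat\X)$, $c=\widehat c_{\sss}$ and $B=A_{\delta}^*\cup Z$. The two hypotheses again need checking in the auxiliary world: connectedness of the induced subgraph on $A_{\delta}^*\cup Z$ follows from Lemma~\ref{lem:connected} applied to $\widehat\X^*$ together with Remark~\ref{rem:hatZ}, since $\{u\in\widehat V:\hat v^*(u)\ge\delta\}=(A_{\delta}^*\setminus(W_{\delta}^*\setminus\widehat W_{\delta}^*))\cup\widehat Z$ and the points of $W_{\delta}^*\setminus\widehat W_{\delta}^*$ are attached through edges of $G(\widehat\X)=G(\X)$ to this set; one has to be slightly careful that $A_{\delta}^*\cup Z$ is exactly $\{\hat v^*\ge\delta\}$ together with whatever pieces Remark~\ref{rem:hatZ} reshuffles (the set $\widetilde Z=\widehat Z\cup\widetilde W_{\delta}$ is what emerges as the internal boundary). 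The finiteness-of-visits hypothesis (2) for $(G(\widehat\X),\widehat c_{\sss})$ requires that $\widehat\X^{\sss}$ restricted to $A_{\delta}^*\cup Z$ be recurrent; this is the auxiliary-chain analogue of Lemma~\ref{lem:v}(ii), which I expect is established nearby in \S\ref{s:proof} (or can be derived the same way: the Doob transform of $\widehat\X$ conditioned to return to $o$ is recurrent, hence so is its additive symmetrization by Theorem~\ref{thm:AG}, and passing to the restriction where $\hat v,\hat v^*$ are bounded below by a constant preserves recurrence up to a bounded factor). Granting this, Lemma~\ref{lem:rest} yields $\widehat\con(A_{\delta}^*\cup Z;\widehat c_{\sss})=\widehat\con(\pd_{\vv}^{\mathrm{int}}(A_{\delta}^*\cup Z);\widehat c_{\sss})=\widehat\con(\pd_{\vv}^{\mathrm{int}}(A_{\delta}^*\cup Z);\widehat c_{\sss}\upharpoonleft D)$ with $D=\pd_{\vv}^{\mathrm{int}}(A_{\delta}^*\cup Z)\cup(A_{\delta}^*\cup Z)^c=\pd_{\vv}^{\mathrm{int}}(A_{\delta}^*\cup Z)\cup V_{\delta}^*$.

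It remains to check that $\pd_{\vv}^{\mathrm{int}}(A_{\delta}^*\cup Z)$ with respect to the weights $\widehat c_{\sss}$ equals $\widetilde Z=\widehat Z\cup\widetilde W_{\delta}$. A vertex $u\in A_{\delta}^*\cup Z$ lies in the internal boundary iff it has an edge of positive $\widehat c_{\sss}$-weight to some vertex outside $A_{\delta}^*\cup Z$, i.e.\ (the only vertices outside being $V_{\delta}^*$) to a vertex of $V_{\delta}^*$; by property~(b) of the auxiliary chain no $z_{a,b}\in Z$ is adjacent in $G(\widehat\X)$ to a vertex of $V_{\delta}^*$, so $Z$ contributes nothing; a vertex $a\in A_{\delta}^*$ has such an edge iff $\widehat P(a,b)+\widehat P(b,a)>0$ for some $b\in V_{\delta}^*$, and by the construction (properties (c),(g) and the definition of $J$) this happens precisely when $a\in W_{\delta}^*\setminus\widehat W_{\delta}^*$ (the $\widehat W_{\delta}^*\times V_{\delta}^*$ edges having been subdivided away) or $a\in\widetilde W_{\delta}$; recalling $\widehat Z=Z\cup(W_{\delta}^*\setminus\widehat W_{\delta}^*)$ from Remark~\ref{rem:hatZ}, this gives $\pd_{\vv}^{\mathrm{int}}(A_{\delta}^*\cup Z)=(W_{\delta}^*\setminus\widehat W_{\delta}^*)\cup\widetilde W_{\delta}\subseteq\widehat Z\cup\widetilde W_{\delta}=\widetilde Z$, and conversely every point of $\widetilde Z$ is either in $Z$ (hence in $A_{\delta}^*\cup Z$) or in $A_{\delta}^*$, so $\widetilde Z\subseteq A_{\delta}^*\cup Z$; a small additional check (that the points of $\widehat Z\setminus\pd_{\vv}^{\mathrm{int}}(A_{\delta}^*\cup Z)$ are reattached harmlessly, so the capacities coincide, exactly as the ``$B'$ vs.\ $B$'' bookkeeping in Lemma~\ref{lem:rest}) completes the identification, and then $D=\widetilde Z\cup V_{\delta}^*$, which is \eqref{e:russ2}. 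The main obstacle I anticipate is not any single step but the bookkeeping: matching $\pd_{\vv}^{\mathrm{int}}$ computed with respect to the slightly unusual weights $\mathbb{E}_o[N]$ and $\widehat c_{\sss}$ against the combinatorially defined sets $W$, $\widehat Z$, $\widetilde W_{\delta}$, $\widetilde Z$, and making sure the ``external vs.\ internal, $\X$ vs.\ $\X^*$'' asymmetries in the definitions of $W_{\delta}^*$, $\widehat W_{\delta}^*$ line up correctly; verifying hypothesis~(2) of Lemma~\ref{lem:rest} for the auxiliary network (the recurrence of $\widehat\X^{\sss}$ restricted to the relevant set) is the only genuinely analytic input, and it should follow by the same Doob-transform/Theorem~\ref{thm:AG} argument used for Lemma~\ref{lem:v}(ii).
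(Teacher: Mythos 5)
Your overall strategy coincides with the paper's (two applications of Lemma~\ref{lem:rest}, with $B=A_{\delta}^*$ for \eqref{e:russ1} and $B=A_{\delta}^*\cup Z$ for \eqref{e:russ2}, connectivity supplied by Lemma~\ref{lem:connected}), but the way you verify hypothesis~(2) of Lemma~\ref{lem:rest} does not work. You argue that recurrence of the \emph{restriction} to $B$ (Lemma~\ref{lem:v}(iii), resp.\ the auxiliary analogue of Lemma~\ref{lem:v}(ii)) together with transience of the full network forces the walk on the full network to visit $B$ only finitely often, via a ``standard dichotomy''. That inference is false in general: SRW on $\Z^3$ is transient, its restriction to the plane $\Z^2\times\{0\}$ is connected and recurrent, yet the walk hits the plane infinitely often a.s. (the third coordinate is a recurrent walk on $\Z$). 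Transience only controls visits to individual vertices, not to an infinite set such as $A_{\delta}^*$. The missing ingredient is exactly Lemma~\ref{lem:v}(v) and its proof mechanism: $v^*$ (resp.\ $\hat v^*$) has finite Dirichlet energy w.r.t.\ the weights $\mathbb{E}_o[N]$ (resp.\ $\widehat c_{\sss}$), hence by Ancona--Lyons--Peres it converges a.s.\ along the trajectory of the corresponding network walk, and the recurrence of the restriction forces the limit to be $0$; since $B$ is the superlevel set $\{v^*\ge\delta\}$ (resp.\ $\{\hat v^*\ge\delta\}$, by Remark~\ref{rem:hatZ}), this yields finitely many visits. This is precisely how the paper establishes hypothesis~(2) in both cases; without it your proof has a genuine gap. (The paper also disposes first of the trivial case where the relevant network is recurrent, in which both sides vanish.)

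There is also a concrete error in your identification of the internal boundary for \eqref{e:russ2}: you claim that, by property~(b), no $z_{a,b}\in Z$ is adjacent in $G(\widehat\X)$ to a vertex of $V_{\delta}^*$, so that $Z$ contributes nothing to $\pd_{\vv}^{\mathrm{int}}(A_{\delta}^*\cup Z)$. In fact each $z_{a,b}$ is by construction adjacent to $b\in\widehat U_{\delta}^*\subseteq V_{\delta}^*$ with $\widehat P(z_{a,b},b)>0$, hence $\widehat c_{\sss}(z_{a,b},b)>0$; what Remark~\ref{rem:hatZ} guarantees is only that no vertex with $\hat v^*>\delta$ has a $\widehat P$-transition into $V_{\delta}^*$, while the states of $Z$ (where $\hat v^*=\delta$) do border $V_{\delta}^*$. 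Thus $Z$ belongs to the boundary, which is why the paper's identification is $\pd_{\vv}^{\mathrm{int}}(A_{\delta}^*\cup Z)=\widetilde Z=Z\cup(W_{\delta}^*\setminus\widehat W_{\delta}^*)\cup\widetilde W_{\delta}$, and your ``reattached harmlessly'' step is repairing a discrepancy of your own making. Your boundary computation for \eqref{e:russ1}, identifying $\pd_{\vv}^{\mathrm{int}}A_{\delta}^*=W$ from \eqref{e:N}, is fine and matches what the paper asserts in passing.
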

\begin{proof}
If the network $(G(\X),\mathbb{E}_o[N]) $ is recurrent then both sides of \eqref{e:russ1} equal zero. We now consider the case it is transient.
By Part (v) of Lemma \ref{lem:v} $\X^{\mathbb{E}[N]} $ a.s.\ visits $A_{\delta}^*$ only finitely many times. Hence \eqref{e:russ1} follows by Lemma \ref{lem:rest} with $B $ there taken to be $A_{\delta}^*$ (and so $\pd_{\vv}^{\mathrm{int}}B=W $). The connectivity condition from Lemma \ref{lem:rest} is satisfied by Lemma \ref{lem:connected}.

We now prove \eqref{e:russ2}. Again, we may assume that the network $(G(\widehat \X), \widehat c_{\sss}  ) $ is transient, as otherwise all terms in \eqref{e:russ2} equal 0.   
 Similarly, by Part (ii) of Lemma \ref{lem:v}, applied to $\widehat \X$  in conjunction with Remark \ref{rem:hatZ} (which asserts that $A_{\delta}^* \cup  Z$ plays for $\widehat \X$ the same role $A_{\delta}^*$ plays from $\X$) we have that the random walk corresponding to the restriction of the  network $(G(\widehat \X), \widehat c_{\sss}  )$ to $A_{\delta}^* \cup  Z$ is recurrent. Since we assume  the  network $(G(\widehat \X), \widehat c_{\sss}  ) $ is transient, the same reasoning as in Part (v) of Lemma \ref{lem:v} now yields that the random walk corresponding to the network  $(G(\widehat \X), \widehat c_{\sss}  ) $   a.s.\ visits $A_{\delta}^* \cup  Z $ only finitely many times (indeed, by the reasoning from the  proof of Part (v) of Lemma \ref{lem:v}  and using   the notation from there,  $\lim_{n \to \infty} \hat v^*(\widehat{X}_n^{c_{\sss}}) = 0 $, a.s.,  where $(\widehat{X}_n^{c_{\sss}})_{n \in \Z_+} $ is the walk corresponding to the network  $(G(\widehat \X), \widehat c_{\sss}  )$).
Hence \eqref{e:russ2} follows from Lemma \ref{lem:rest} applied to the network $(G(\widehat \X), \widehat c_{\sss}  ) $ with $B $ from Lemma \ref{lem:rest} taken to be $A_{\delta}^* \cup Z  $ (and so $\pd_{\vv}^{\mathrm{int}}B = \widetilde Z $). Again, the connectivity condition from Lemma \ref{lem:rest} is satisfied by Lemma \ref{lem:connected} (as well as the fact that $A_{\delta}^* \cup  Z$ plays for $\widehat \X$ the same role $A_{\delta}^*$ plays from $\X$).  
\end{proof}
The following lemma is also borrowed from \cite{traces}. Again the proof below is different due to the lack of reversibility assumption.
\begin{lemma}
\label{lem:v2}
In the above setup and notation we have that for all $\delta \in (0,1)$
\begin{equation}
\label{e:capadel}
\mathrm{Cap}(A_{\delta}) \le \mathrm{Cap}(o) / \delta .
\end{equation}
\end{lemma}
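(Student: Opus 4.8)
The plan is to reduce the statement to an equivalent bound phrased directly in terms of $\X$, and then to prove that bound by a last--exit decomposition of the Green function. For the reduction: passing from $\X$ to its time reversal $\X^*$ (which is again transient, irreducible and has $\pi$ as a stationary measure) swaps $v\leftrightarrow v^*$, hence $A_\delta\leftrightarrow A_\delta^*$, and swaps $\mathrm{Cap}\leftrightarrow\mathrm{Cap}_*$; since $\mathrm{Cap}=\mathrm{Cap}_*$ by \eqref{e:cap2}, the lemma is equivalent to the inequality
\[ \mathrm{Cap}(A_\delta^*)\le \mathrm{Cap}(o)/\delta . \]
It is natural to work with $A_\delta^*$ rather than $A_\delta$ here because, as \eqref{e:green} shows, it is $v^*$ (not $v$) that is directly tied to the Green function $\mathcal{G}(o,\cdot)$.

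To prove this, I would fix $\delta\in(0,1)$ and a finite set $C\subseteq A_\delta^*$, and estimate $\mathrm{Cap}(C)=\sum_{c\in C}\pi(c)\,\Pr_c[T_C^+=\infty]$. Since $\X$ is transient and irreducible, the finite set $C$ is visited only finitely often almost surely, so decomposing the event $\{T_C<\infty\}$ according to the time and location of the \emph{last} visit to $C$ and applying the Markov property gives the last--exit decomposition $\Pr_x[T_C<\infty]=\sum_{c\in C}\mathcal{G}(x,c)\,\Pr_c[T_C^+=\infty]$ for all $x$. Taking $x=o$ and substituting $\mathcal{G}(o,c)=\pi(c)\,v^*(c)\,\mathcal{G}(o,o)/\pi(o)$ from \eqref{e:green} and $\mathcal{G}(o,o)/\pi(o)=1/\mathrm{Cap}(o)$ from \eqref{e:alpha}, this becomes
\[ \sum_{c\in C}\pi(c)\,v^*(c)\,\Pr_c[T_C^+=\infty]=\mathrm{Cap}(o)\,\Pr_o[T_C<\infty]\le \mathrm{Cap}(o). \]
Because $v^*(c)\ge\delta$ for every $c\in C\subseteq A_\delta^*$, the left--hand side is at least $\delta\,\mathrm{Cap}(C)$, whence $\mathrm{Cap}(C)\le \mathrm{Cap}(o)/\delta$. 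Taking the supremum over all finite $C\subseteq A_\delta^*$, the definition \eqref{e:cap} of capacity yields $\mathrm{Cap}(A_\delta^*)\le \mathrm{Cap}(o)/\delta$, which completes the proof by the reduction above.

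The main obstacle, and the place where the argument genuinely departs from the reversible proof in \cite{traces}, is that one has no access to the Dirichlet principle or to the electrical--network description of the capacity: the whole argument must instead be carried through the probabilistic identities \eqref{e:green}--\eqref{e:alpha} together with the last--exit decomposition, and one has to keep careful track of which quantities refer to $\X$ and which to $\X^*$. In particular the estimate emerges naturally for $A_\delta^*$, and it is precisely the Gaudilli\`ere--Landim symmetry $\mathrm{Cap}=\mathrm{Cap}_*$ that lets one transfer it to the set $A_\delta$ appearing in the statement. No single step is technically difficult; this bookkeeping between $\X$ and $\X^*$ is the only delicate point.
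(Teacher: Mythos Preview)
Your proof is correct and takes a genuinely different route from the paper's. The paper argues via an auxiliary parameter $\eps$: it uses Lemma~\ref{lem:v}(iv) (that $v(X_n)\to 0$ a.s.) to obtain $\Pr_x[T_{\{o\}\cup V_\eps}<\infty]=1$, deduces $\Pr_x[T_o^+<T_{V_\eps}]\ge\delta-\eps$ for $x\in A_\delta$, and then combines the strong Markov property at $T_{A_\delta}$ with the Gaudilli\`ere--Landim symmetry $\mathrm{Cap}(A,B)=\mathrm{Cap}(B,A)$ to get $\mathrm{Cap}(V_\eps,o)\ge(\delta-\eps)\,\mathrm{Cap}(V_\eps,A_\delta)$; sending $\eps\to0$ via \eqref{e:cap4} yields the bound. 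Your argument bypasses all of this by invoking the last--exit decomposition $\Pr_o[T_C<\infty]=\sum_{c\in C}\mathcal{G}(o,c)\,\Pr_c[T_C^+=\infty]$ together with the Green function identity \eqref{e:green}, which immediately gives $\mathrm{Cap}(C)\le\mathrm{Cap}(o)/\delta$ for every finite $C\subseteq A_\delta^*$. This is shorter and more transparent: it trades the paper's reliance on Lemma~\ref{lem:v} and on the capacity limits \eqref{e:cap4} for the single identity \eqref{e:green}, while both proofs ultimately use the symmetry \eqref{e:cap2} to pass between $A_\delta$ and $A_\delta^*$.
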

\begin{proof}
 Fix some $0< \eps < \delta < 1$ and  $ x \in A_{\delta}$. By  Lemma \ref{lem:v} $\Pr_{x}[T_{\{ o\} \cup V_{\eps} } < \infty]=1 $ and so by the definition of $V_{\eps}$ and the Markov property
\begin{equation}
\label{e:last}
\delta 1_{x \neq o} \le \Pr_{x}[T_{o}^+ < \infty] \le\Pr_{x}[T_{o}^+ < T_{V_{\eps}}]+\Pr_{x}[T_{o}^+ < \infty \mid T_{o}^+ > T_{V_{\eps}}] \le \Pr_{x}[T_{o}^+ < T_{V_{\eps}}]+\eps .
\end{equation}
Substituting $x=o$ yields $\mathrm{Cap}(o)=\lim_{\epsilon \to 0}\mathrm{Cap}(o,V_{\eps})=\lim_{\epsilon \to 0}\mathrm{Cap}(V_{\eps},o)$. By the strong Markov property (applied to $T_{A_{\delta}}$) together with \eqref{e:last}   we get that  for every $u \in V_{\epsilon}$ we have 
\[\Pr_{u}[T_{o} <  T_{V_{\eps}}^{+}] \ge \Pr_{u}[T_{A_{\delta}} <  T_{V_{\eps}}^{+}] (\delta - \epsilon).   \]
Multiplying by $\pi(u)$ and then summing over all $u \in V_{\eps}$ we get that \[\mathrm{Cap}(V_{\eps},o) \ge \mathrm{Cap}(V_{\eps},A_{\delta})[\delta - \eps]  .\]
Finally,  by \eqref{e:cap4} 
\[\mathrm{Cap}(o)=\lim_{\epsilon \to 0}\mathrm{Cap}(V_{\eps},o) \ge \lim_{\epsilon \to 0}\mathrm{Cap}(V_{\eps},A_{\delta})[\delta - \eps]=\delta \lim_{\epsilon \to 0}\mathrm{Cap}(A_{\delta},V_{\eps}) \] \[ \ge \delta \sup_{A \text{ finite }: A \subseteq A_{\delta} } \lim_{\epsilon \to 0}\mathrm{Cap}(A,V_{\eps}) =\delta \sup_{A \text{ finite }: A \subseteq A_{\delta} } \mathrm{Cap}(A)= \delta \mathrm{Cap}(A_{\delta}), \]
where the penultimate equality follows from the fact that for all  $0< \eps < \delta <1$ and all finite $A \subseteq A_{\delta}$, by the Markov property (applied to $T_A$ in the penultimate inequality below) we have that for all $a \in A$  \[0 \le \Pr_a[T_{A}^+< \infty]- \Pr_a[T_{A}^+< T_{V_{\epsilon}}] \le \sup_{y \in V_{\epsilon}}\Pr_y[T_{A}<  \infty] \le \delta^{-1}\sup_{y \in V_{\epsilon}}\Pr_y[T_{o}<  \infty] \le \eps /\delta.\]
 Hence $\mathrm{Cap}(A)=\lim_{\epsilon \to 0}\mathrm{Cap}(A,V_{\eps}) $ for all finite $A \subseteq A_{\delta}$.
\end{proof}

\subsection{Proof of Proposition \ref{p:main}}
\emph{Proof:} Fix $\delta \in (0,1)$. Recall the auxiliary chain $\widehat \X $ from \S\ref{s:aux} and the  notation from there. We denote the capacity of a set $A$ in the restriction of the network $(G(\X),c)$ (respectively, $(G(\widehat \X),c )$) to a set $B$ by $\con (A;c \upharpoonleft  B)$ (respectively,  $\widehat  \con (A;c \upharpoonleft  B)$), where $c$ are some arbitrary edge weights. Let $W,\widetilde W_{\delta}, \widetilde Z $ and $\widehat Z$ be as in Corollary \ref{cor:5.4}. We first argue that
\begin{equation}
\label{e:5.6}
\con (A_{\delta}^*;\mathbb{E}_o[N])=\con (W;\mathbb{E}_o[N] \upharpoonleft  W\cup V_{\delta}^* ) \le \widehat \con ( \widetilde Z ;\mathbb{\widehat E}_o[\widehat N] \upharpoonleft \widetilde Z\cup V_{\delta}^* ),   \end{equation}
where the equality is precisely  \eqref{e:russ1}. For the inequality above, modify the restriction of $(G(\widehat \X),\mathbb{\widehat E}_o[\widehat N]  )$ to $\widetilde Z\cup V_{\delta}^*$ as follows:   for every $a \in \widehat W_{\delta}^*$ such that $J(a):=\{b \in V_{\delta}^*:(a,b) \in J  \} \neq \eset$ we identify the set $\{z_{a,b}:b \in J(a) \}$ as a single vertex labeled by $a$. This change cannot change the capacity on the r.h.s.\ of \eqref{e:5.6}, since the states we are identify all belong to $\widetilde Z$. After this is done, the set $\widetilde Z$ is transformed into  the set $W$ and the inequality in  \eqref{e:5.6} becomes a consequence of Rayleigh's monotonicity principle, using property (e) from \S\ref{s:aux}. We now argue using \eqref{e:N} (here applied to $\widehat \X$) and again property (e) from \S\ref{s:aux} that
\begin{equation}
\label{e:5.7caphat}    \widehat \con ( \widetilde Z ;\mathbb{\widehat E}_o[\widehat N] \upharpoonleft \widetilde Z \cup V_{\delta}^*) \le \sfrac{2\delta}{\widehat \con (o)} \widehat \con (\widetilde Z ; \widehat c_{\sss} \upharpoonleft \widetilde Z \cup V_{\delta}^*),  
\end{equation}
where $\widehat c_{\sss} $ are the edge weights of the additive symmetrization of $\widehat \X $. To see this, observe that by \eqref{e:N} and the definition of the set $\widetilde Z $, the edge weights in the restriction of the network    $(G(\widehat \X);\widehat c_{\sss} ) $    to $\widetilde Z \cup V_{\delta}^*$ are (edge-wise) larger than the corresponding edge weights in  the restriction of the network  $(G(\widehat \X);\mathbb{\widehat E}_o[\widehat N]) $ to $\widetilde Z \cup V_{\delta}^*$ by at most a factor of $\frac{2\delta}{\widehat \con (o)} $. Indeed, if $z \in \widehat Z $ and $u \in V_{\delta}^*$ then $\max \{\hat v^*(z),\hat v^*(u) \}=\delta $ (and then the claim follows by \eqref{e:N}), while if $w \in \widetilde W_{\delta}  $ then for all $b \in V_{\delta}^*$ we have that $\widehat P(b,w)=P(b,w)=0  $. Thus by \eqref{e:N} $\mathbb{\widehat E}_o[\widehat N(w,b)]= \mathcal{\widehat G}(o,b)\widehat P(b,w)=\frac{2\widehat c_{\sss}(b,w)\hat v^*(b) }{\widehat \con (o)}<\frac{2\widehat c_{\sss}(b,w)\delta  }{\widehat \con (o)} $, where $ \mathcal{\widehat G}(o,b):=\sum_{n=0}^{\infty}\widehat P^n(o,b) $ is the Green's function of $\widehat \X $.

We now argue that
\begin{equation}
\label{e:new}
\widehat \con (\widetilde Z ; \widehat c_{\sss} \upharpoonleft \widetilde Z  \cup V_{\delta}^*) \le \widehat \con(o)/ \delta .
\end{equation}

By \eqref{e:russ2} $\widehat \con (\widetilde Z ; \widehat c_{\sss} \upharpoonleft \widetilde Z  \cup V_{\delta}^*)=\widehat \con (\widetilde Z  ; \widehat c_{\sss}  )= \widehat \con_{\sss}(\widetilde Z )$. By \eqref{e:cap5}  $ \widehat \con_{\sss}(\widetilde Z ) \le  \widehat \con_{*}(\widetilde Z ) $ and by Lemma \ref{lem:v2} and properties (a)-(b) from \S\ref{s:aux} (namely, that $\hat v^* (z)=\delta $ for all $z \in \widehat Z$ and that $\hat v^* (w)=v^*(w) \ge \delta$ for all $w \in \widetilde W_{\delta}$) we have that $  \widehat \con_{*}(\widetilde Z) \le   \widehat \con_{*}(o)/ \delta=   \widehat \con(o)/ \delta $. Thus
\[\widehat \con (\widetilde Z ; \widehat c_{\sss} \upharpoonleft \widetilde Z  \cup V_{\delta}^*) =\widehat \con_{\sss}(\widetilde Z ) \le  \widehat \con_{*}(\widetilde Z ) \le    \widehat \con(o)/ \delta, \]
 as desired. Finally, combining \eqref{e:5.6}-\eqref{e:new}  yields that
\[\con (A_{\delta}^*;\mathbb{E}_o[N]) \le \sfrac{2\delta}{\widehat \con (o)} \widehat \con (\widetilde Z ; \widehat c_{\sss} \upharpoonleft \widetilde Z \cup V_{\delta}^*) \le  2,    \]
which concludes the proof. \qed

\section{Proof of Theorem \ref{thm:4}}
\label{s:proofof4}
Let $f,g:\N \to \R_{+}$. We write $f \lesssim g $ and $g \gtrsim f $ if there exists $C \ge 1 $ such that  $ f(k) \le C g(k) $ for all $k$. We write $f \asymp g $ if  $f \lesssim g \lesssim  f $.   Recall the definitions of $g_{s}$ and $\log_{*}^{(s)}$ from Theorem \ref{thm:4}. Below we consider birth and death chains on $\Z_+$ (or on $\{0,1,\ldots,n\} $) with holding probability 0 (i.e., $P(x,x)=0$ for all $x$).   Recall that a birth and death chain on $\Z_+$ with weights $(\omega(n,n+1):n \in \Z_+) $ is a reversible Markov chain with $P(n,n+1)=\sfrac{\omega(n,n+1)}{\omega(n,n+1)+\omega(n,n-1)}=1-P(n,n-1)$ (where $\omega(0,-1):=0$) for all $n \in \Z_+$. Throughout the section, we employ the convention that all of the edge weights are symmetric, i.e., $\omega(n,n+1)=\omega(n+1,n) $ for all $n \in \Z_+$ (which is consistent with the general reversible setup).  In this section we prove the following theorem.
\begin{theorem}
\label{thm:ata melagleg alay} Let $s \in \Z_+$. Let $R$ be a birth and death chain with weights $w(k-1,k) \asymp g_{s+2}(k)(\log_{*}^{(s+3)}k)^{2}$. Then there exists some $c(s)>0$ such that with positive probability $R$ crosses each edge $(k,k+1)$ at least $c(s)g_s(k)$
times. 
\end{theorem}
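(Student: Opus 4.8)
The plan is to analyze the birth and death chain $R$ with the prescribed weights directly, estimating the probability that a single excursion from $0$ to $\infty$ crosses the requisite number of times at each edge, and showing this probability is positive. First I would recall the standard facts about birth and death chains encoded as networks: for weights $w(\cdot,\cdot)$, the effective resistance from $0$ to $\infty$ is $\rho:=\sum_{k\geq 0} 1/w(k,k+1)$, the chain is transient iff $\rho<\infty$, and conditioned on transience (equivalently, started at $0$ and conditioned to never return to $0$), the walk performs a single infinite excursion toward $\infty$. With $w(k-1,k)\asymp g_{s+2}(k)(\log_*^{(s+3)}k)^2$, and since $g_{s+2}(k)(\log_*^{(s+3)}k)^2 = k\prod_{j=1}^{s+2}\log_*^{(j)}k\cdot(\log_*^{(s+3)}k)^2$ grows faster than $k\log k(\log\log k)\cdots$, the sum $\sum 1/w(k,k+1)$ converges — one needs the extra squared factor exactly to beat the borderline divergence of $\sum 1/(k\prod\log_*^{(j)}k)$. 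So $R$ is transient, and I would fix $o=0$ and condition on the (positive-probability) event that $0$ is never revisited.

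Next I would set up the crossing-number estimates. For a transient birth and death chain started at $0$, the number of upcrossings $U(k):=|\{n:X_n=k,X_{n+1}=k+1\}|$ of edge $(k,k+1)$ is related to $N(k,k+1)$ by $N(k,k+1)=2U(k)-\Ind{X_0\le k}$ roughly (the walk goes to $\infty$, so it crosses each edge an odd number of times up to the starting offset; more precisely $N(k,k+1)\in\{2U(k)-1,2U(k)\}$). Given the value $U(k)$, the walk from level $k+1$ either returns to $k$ (probability $p_k:=$ the escape-to-$k$ probability from $k+1$, computable via effective resistances) and then must come back up, or escapes to $\infty$ without returning; so conditioned on being transient, $U(k+1)$ is governed by the interplay of these geometric-type mechanisms. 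The cleanest route is to use the reversibility/network structure: $\E_0[N(k,k+1)]=w(k,k+1)\cdot(\text{harmonic-type quantity})$, and more usefully, the \emph{joint} law of the crossing numbers can be described by a Markov chain in $k$. I would aim to show that $U(k)$ is, up to constants, comparable to $w(k,k+1)\cdot\Pr_{k+1}[\text{escape to }\infty]$ times a factor, and then establish a quantitative lower bound: with positive probability, $U(k)\geq c(s)g_s(k)$ simultaneously for all $k$.

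The natural mechanism for the "simultaneous for all $k$" lower bound is a Borel–Cantelli / martingale argument. I would consider the process $M(k):=U(k)/\psi(k)$ for a suitable normalizing sequence $\psi(k)$ (chosen from the weights), show it is a nonnegative supermartingale or martingale in $k$ that converges a.s.\ to a positive limit on the transience event, and control the fluctuations so that $\inf_k M(k)>0$ with positive probability — this is exactly analogous to (and in fact parallels) how Kozma's auxiliary chains are engineered to be "as recurrent as possible." The key computation is that the chosen weights make $\psi(k)$ comparable to $g_s(k)$: the two extra iterated-logarithm factors beyond $g_s$ in the weight $w(k-1,k)\asymp g_{s+2}(k)(\log_*^{(s+3)}k)^2$ are precisely what is needed so that the resistance sum over the "tail past $k$" decays at the right rate, giving escape probability $\Pr_{k}[\text{never return below }k]\asymp 1/(g_{s+2}(k)(\log_*^{(s+3)}k)^2\cdot\rho_{\ge k})$-type expressions that, after multiplying by $w(k,k+1)$, leave $g_s(k)$ up to constants; chasing the telescoping products of iterated logs is the bookkeeping core. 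The main obstacle I expect is precisely this: controlling the fluctuations of the crossing-number process uniformly in $k$ so that the infimum stays positive — a second-moment or exponential-martingale estimate on $\log M(k)$ — while simultaneously matching the delicate iterated-logarithm asymptotics of the weights to $g_s$. The convergence of the resistance sum and transience are routine; the quantitative "with positive probability, never drops below $c(s)g_s(k)$" is where the real work lies, and I would model it on the cut-point / excursion arguments of James–Lyons–Peres and on Kozma's construction referenced in the paper.
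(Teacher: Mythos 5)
Your reduction to the up-crossing counts is fine, but the core mechanism of your plan---normalize $U(k)$ by some $\psi(k)$, show $M(k)=U(k)/\psi(k)$ is a nonnegative (super)martingale converging to a positive limit, and deduce $\inf_k M(k)>0$ with positive probability---does not work as stated, and the bookkeeping you expect to produce $\psi(k)\asymp g_s(k)$ comes out differently. The Green-function/resistance computation gives $\mathbb{E}_0[N(k,k+1)]\asymp w(k,k+1)\sum_{j\ge k}w(j,j+1)^{-1}\asymp g_{s+2}(k)\log_{*}^{(s+3)}k$, which exceeds $g_s(k)$ by the factor $\log_{*}^{(s+1)}k\,\log_{*}^{(s+2)}k\,\log_{*}^{(s+3)}k$. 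Hence the only normalization under which $U(k)/\psi(k)$ is (approximately) a martingale is $\psi(k)\asymp g_{s+2}(k)\log_{*}^{(s+3)}k$; with $\psi=g_s$ the process is an unbounded submartingale and no convergence theorem applies. With the martingale normalization, the statement you want is false: writing the crossing counts as a near-critical branching process with immigration (the Kesten--Kozlov--Spitzer decomposition the paper uses in Lemma~\ref{lem:KSSscales}), the increments of $\log\bigl(U(k)/\mathbb{E}_0[U(k)]\bigr)$ have conditional variance of order $1/U(k)$, and $\sum_k 1/\mathbb{E}_0[U(k)]\asymp\sum_k \bigl(k\prod_{j=1}^{s+3}\log_{*}^{(j)}k\bigr)^{-1}=\infty$; so on the event that $U(k)$ stays within a constant factor of its mean the quadratic variation diverges, forcing unbounded downward oscillations of the logarithm---a contradiction, whence a.s.\ $\inf_k U(k)/\mathbb{E}_0[U(k)]=0$. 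The crossing counts unavoidably dip below their means by unbounded factors, and the three extra iterated-log factors in the hypothesis are there precisely to absorb those dips; a one-shot martingale-convergence argument cannot see this.

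What is missing is a quantitative, uniform-in-$k$ control of how far below its mean the crossing process can fall, showing that with positive probability the loss never exceeds $\log_{*}^{(s+1)}k\,\log_{*}^{(s+2)}k\,\log_{*}^{(s+3)}k$. The paper achieves this by a multi-scale induction on $s$: it passes to the induced chain on the dyadic points $\{2^k\}$, whose weights drop one level (Lemma~\ref{lem:scales}); the induction hypothesis gives many round trips of the induced chain across each dyadic edge; a KKS-type branching estimate (Lemma~\ref{lem:KSSscales}) shows that each round trip, with probability bounded below uniformly in $k$, forces order $2^k$ crossings of every edge in the corresponding dyadic block; and a Borel--Cantelli argument finishes, with the base case supplied by the James--Lyons--Peres chain (Proposition~\ref{p::bd1}) bootstrapped once (Corollary~\ref{cor:s=1}). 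If you insist on arguing directly rather than through this induction, you would need an exponential-martingale/large-deviation bound on the downward fluctuations of $\log U(k)$ that is valid and summable uniformly over all scales, including the early ones and the immigration terms---which is essentially a reformulation of the same multi-scale work; your proposal gestures at such an estimate but supplies neither the correct normalization nor the fluctuation control.
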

The proof of Theorem \ref{thm:ata melagleg alay} will be carried by induction.  We first have to analyze the case where $w(k-1,k) \asymp k(\log_{*}k)^{2}$ and then the case where  $w(k-1,k) \asymp g_{1}(k)(\log_{*}^{(2)}k)^{2}$.

Recall that $t$ is a cut-time of a Markov chain $(X_n)_{n=0}^{\infty}$ if $\{X_n:  n \le t\}$ and $\{X_n:   n > t\}$ are  disjoint. We say that $ \ell  $ is a \emph{regeneration}
point if the walk visits $\ell$ only once (equivalently, in the birth and death setup, $T_{\ell}$ is a cut-time). 
As noted in \S\ref{s:related}, Benjamini, Gurel-Gurevich and Schramm  \cite{cut3} showed that for every transient Markov chain the expected number of cut-times  is infinite.   The following example, which appeared earlier in \cite{JLP}, a.s.\ has only finitely many cut-times.  This was previously proven in \cite{JLP}. We  present the proof for the sake of completeness. Recall the convention that $w(k-1,k)=w(k,k-1) $.  
\begin{proposition}
\label{p::bd1}
Let $R$ be a birth and death chain with weights $w(k-1,k):=\omega_{k-1} \asymp k\log^{2}k$.
Then a.s.\ $R$ has only finitely many cut-times. Moreover, for every fixed $M>0$ there exists some $p=p(M)>0$ such that with probability at least $p $ for all $k$ the edge $\{k,k+1\}$ is crossed at least $2M+1$ times.
\end{proposition}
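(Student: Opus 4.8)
\emph{Proof plan (following \cite{JLP}).} The starting point is that $R$ (which we take started at $0$) is transient, since its effective resistance from $0$ to infinity is $\sum_{k\ge0}\omega_k^{-1}\asymp\sum_{k\ge1}\frac1{k\log^2k}<\infty$; being a transient birth and death chain started at $0$ it a.s.\ tends to $+\infty$, so each edge $\{k,k+1\}$ is crossed an odd number of times $N(k,k+1)=2D_k+1$, where $D_k$ (resp.\ $A_k:=D_k+1$) denotes the number of down- (resp.\ up-) crossings of $\{k,k+1\}$. The plan is to organise the whole proof around the process $(A_k)_{k\ge0}$, whose key feature is that it is a Markov chain: this is an elementary consequence of the strong Markov property (a Ray--Knight-type description of a birth and death chain), since conditionally on $A_k=a$ the walk performs $a$ conditionally independent excursions into $[k+1,\infty)$ --- the last one (chronologically) an infinite excursion and the other $a-1$ distributed as $R$ from $k+1$ conditioned on $\{T_k<\infty\}$ --- and $A_{k+1}$ equals the total number of up-crossings of $\{k+1,k+2\}$ made during these excursions. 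Thus $(A_k)$ is a time-inhomogeneous branching process on $\{1,2,\dots\}$ with the infinite excursion acting as immigration. Writing $\rho_m:=\sum_{j\ge m}\omega_j^{-1}$, direct computations give $\mathbb{P}[A_k=1]=\mathbb{P}_{k+1}[T_k=\infty]=\omega_k^{-1}/(\omega_k^{-1}+\rho_{k+1})\asymp\frac1{k\log k}$ and $\mathbb{E}[D_k]=\omega_k\rho_{k+1}\asymp k\log k$, so the expected size grows: $\mathbb{E}[A_k]\asymp k\log k$ and hence $\prod_{j<k}\bigl(\mathbb{E}[A_{j+1}]/\mathbb{E}[A_j]\bigr)=\mathbb{E}[A_k]/\mathbb{E}[A_1]\asymp k\log k\to\infty$, i.e.\ $(A_k)$ is (only slightly) supercritical in the sense that the products of the mean ratios diverge. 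Finally, since the trace of $R$ up to time $t$ is the interval $\{0,\dots,\max_{s\le t}X_s\}$, one checks directly that $t$ is a cut-time iff $X_t$ equals the current running maximum and $R$ never revisits that level after time $t$, and that this holds for exactly one time $t$ for each $k$ with $D_k=0$ and for no other $t$; hence the number of cut-times of $R$ equals $\#\{k\ge0:\ A_k=1\}$.

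The first assertion is thus equivalent to the statement that a.s.\ $A_k=1$ for only finitely many $k$. It should be stressed that this cannot be obtained by a naive Borel--Cantelli estimate: indeed $\sum_k\mathbb{P}[A_k=1]=\mathbb{E}\bigl[\#\{k:A_k=1\}\bigr]=\mathbb{E}[\#\text{cut-times}]=\infty$ --- this is precisely the Benjamini--Gurel-Gurevich--Schramm fact recalled in \S\ref{s:related}, and is also visible from $\mathbb{P}[A_k=1]\asymp\frac1{k\log k}$. The plan is instead to exploit the branching structure together with the divergence $\prod_{j<k}\mu_j\to\infty$ (with $\mu_j:=\mathbb{E}[A_{j+1}]/\mathbb{E}[A_j]$), which is what the choice $\omega_k\asymp k\log^2k$ buys and what separates $(A_k)$ from a genuinely critical branching process with immigration (which can be null-recurrent, i.e.\ return to state $1$ infinitely often). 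Concretely, I would show $\mathbb{P}[\exists\,k\ge L:\ A_k=1]\to0$ as $L\to\infty$ along these lines: (i) the per-excursion up-crossing counts $\eta^{(k)}_i$ appearing in $A_{k+1}=\sum_{i\le A_k}\eta^{(k)}_i$ have geometric-type tails, so Chernoff-type bounds show that $(A_k)_{k\ge L}$ started from $A_L=a$ stays within a constant factor of its growing mean curve --- in particular never returns to $1$ --- with probability at least $1-Ce^{-ca}$; (ii) each excursion of $(A_k)$ away from state $1$ reaches such a large value $a$, at a correspondingly large level, with probability tending to $1$, because the mean is non-decreasing and tends to infinity; (iii) combining (i) and (ii) along the (random) successive returns of $(A_k)$ to state $1$, the resulting ``failure'' probabilities are summable and Borel--Cantelli applies. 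I expect this analysis of the near-critical branching chain $(A_k)$ --- establishing transience to $+\infty$ while controlling the heavy-tailed fluctuations of the branching sums --- to be the main obstacle of the proof.

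For the ``moreover'' part I want $\mathbb{P}[N(k,k+1)\ge2M+1\text{ for all }k]=\mathbb{P}[A_k\ge M+1\text{ for all }k]\ge p(M)>0$. Since $D_0$ is geometric with $\mathbb{P}[D_0\ge N]=q_0^N$ for every $N$ (here $q_0:=\mathbb{P}_1[T_0<\infty]\in(0,1)$ by transience), one may first pay the positive price $q_0^{N(M)}$ to start from $A_0\ge N(M)+1$ with $N(M)$ chosen large; conditionally on that, the same estimates as in step (i) above show that, started from a large enough value, $(A_k)$ stays $\ge M+1$ for all time with probability bounded below by a positive constant --- once $A_k$ is large it remains large (non-decreasing mean) and the per-step probability of ever dropping below $M+1$ is then at most $Ce^{-cA_k}$, which is summable. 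Multiplying the two positive factors yields $p(M)>0$. (Alternatively, one may run the argument of the previous paragraph ``from level $k_0$'' to obtain $D_k\ge M$ for all $k\ge k_0$ with positive probability, and note that $\mathbb{P}[D_k\ge M\text{ for all }k\le k_0]>0$ as a finite intersection of events of positive probability.) The substantive input is again the concentration estimate for the branching sums $A_{k+1}=\sum_{i\le A_k}\eta^{(k)}_i$, shared with the first part.
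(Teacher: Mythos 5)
Your reduction is sound and is indeed a known way to package this chain: the up-crossing counts $A_k$ of the edges $\{k,k+1\}$ form a branching-type Markov chain with immigration (the paper itself uses exactly this Kesten--Kozlov--Spitzer decomposition in the proof of Lemma \ref{lem:KSSscales}), the identification of cut-times with $\{k:A_k=1\}$ is correct, and your observation that a naive first-moment Borel--Cantelli must fail (since $\sum_k\mathbb{P}[A_k=1]\asymp\sum_k\frac1{k\log k}=\infty$) is exactly right. But the whole proof lives in your step (i), which you leave unproven and which, as formulated, is false. First, the ``stays within a constant factor of its growing mean curve'' claim: the normalized process $A_k/\mathbb{E}[A_k]$ is (essentially) a nonnegative martingale whose increments have conditional variance $\asymp A_k/\mathbb{E}[A_{k+1}]^2$, so the relevant Kesten--Stigum/$L^2$ condition is $\sum_k\sigma_k^2/\mathbb{E}[A_k]\asymp\sum_k\frac1{k\log k}<\infty$ --- and this sum diverges. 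The divergence that kills Borel--Cantelli is the very same divergence that prevents the process from tracking its mean curve; the chain with weights $k\log^2k$ sits precisely in the window where $A_k/(k\log k)$ is expected to tend to $0$ a.s., so no bound of the form ``within a constant factor of $\mathbb{E}[A_k]$ for all $k\ge L$ with probability $1-Ce^{-ca}$'' is available. Second, even the weaker assertion ``started from $A_L=a$, never returns to $1$ with probability $\ge1-Ce^{-ca}$'' cannot hold uniformly in $L$: a finite excursion above level $L$ reaches level $2L$ with probability $\asymp\frac{1/\omega_L}{\sum_{j=L}^{2L-1}\omega_j^{-1}}\asymp\frac1L$, so if $a\ll L$ then with probability $1-O(a/L)$ none of the $a$ excursions gets there and $A_{2L}$ is produced by the single infinite excursion alone --- the conditional law beyond level $2L$ forgets $a$, and the probability of a later return to $1$ is then a function of $L$ alone (decaying only logarithmically), not exponentially small in $a$. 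Any correct version must couple $a$ to the level and iterate over scales, at which point you are pushed back to a genuine multi-scale estimate.

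Step (iii) has a related problem: returns to state $1$ cluster. This is precisely the content of the paper's conditional estimate \eqref{eq:doob doob eyze khaboob} --- given one regeneration level in $[k,k^2]$, the expected number of further ones nearby is $\gtrsim\log k$ --- so the conditional probability of ``one more return'', given a return at level $k$, is not uniformly bounded away from $1$, and a per-return Borel--Cantelli needs control on how fast the levels of successive return clusters grow, which your plan does not supply. The ``moreover'' part inherits the same gap: the bound ``the per-step probability of ever dropping below $M+1$ is at most $Ce^{-cA_k}$, which is summable'' is circular, since summability requires the a priori growth of $A_k$ that was to be proven. For contrast, the paper's proof avoids all trajectory-tracking: it works in windows $[k,k^2+k]$, shows the expected number of local regeneration points is $O(1)$ unconditionally but $\gtrsim\log k$ conditionally on there being at least one (via a Doob transform and effective-resistance computations), deduces $\mathbb{P}[X'(k)\neq\eset]\lesssim1/\log k$ from the ratio of these two moments, and applies Borel--Cantelli along the doubly exponential scales $k=2^{2^\ell}$; the ``moreover'' statement follows from the same window estimate plus a Markov-property patch for the finitely many initial edges. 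In short, your framework is a legitimate alternative starting point, but the quantitative heart of the proof is missing, and the specific concentration claims you propose to fill it with do not hold for this near-critical chain.
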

\begin{proof}
Let $k \in \N $.  We say that $ i \in [k,k^{2}+k] $ is a $k$-\emph{local regeneration}
point if the walk (started from 0) visits $i$ only once before getting to $k^{3}$. Let $X(k)$ be the collection of $k$-local regeneration points. Let $X'(k)$ be the collection of $k$-local regeneration points in $[k,k^2]$. We will show that 
\begin{equation}
\mathbb{E}[|X(k)|] \lesssim 1.\label{eq:simple}
\end{equation}
\begin{equation}
\mathbb{E}[|X(k)|\,|\,X'(k)\ne \eset]\gtrsim \log k.\label{eq:doob doob eyze khaboob}
\end{equation}

Comparing (\ref{eq:doob doob eyze khaboob}) to (\ref{eq:simple})
gives 
\begin{equation}
\label{e:1stmon}
\mathbb{P}[X'(k) \ne \eset] \le \frac{\mathbb{E}[|X(k)|]}{\mathbb{E}[|X(k)|\,|\,X'(k)\ne \eset]} \lesssim\frac{1}{\log k}.
\end{equation}
Summing  over all $k$ of the form $k=2^{2^{\ell}}$, the sum $\sum_{\ell=0}^{\infty}\mathbb{P}(X'(2^{2^{\ell}}) \ne \eset)  $ converges. Hence by the Borel-Cantelli Lemma there are a.s.\ only finitely
many local regeneration points  (where we have used the fact that $\cup_{\ell=0}^{\infty}[2^{2^{\ell}},(2^{2^{\ell}})^{2}] \cap \Z=\Z_{+} \setminus \{0,1 \}  $). But that means, of course, that there are only finitely many cut-times.

We first verify (\ref{eq:simple}) via a straightforward calculation. For every $\ell \in[k,k^{2}+k]$
the effective-resistance from $\ell$ to $k^{3}$, denoted by $\mathcal{R}_{\ell \leftrightarrow k^3} $, satisfies (uniformly in $k$ and $\ell \in [k,k^2+k]$) 
\[
\mathcal{R}_{\ell \leftrightarrow k^3}=\sum_{i= \ell}^{k^3}\frac{1}{\omega_{i} } \gtrsim  \frac{1}{\log \ell}-\frac{1}{\log(k^{3})}\gtrsim \frac{1}{\log k},
\]
while the resistance to $\ell+1$ is $\mathcal{R}_{\ell \leftrightarrow \ell+1}=\frac{1}{\omega_{\ell} } \lesssim \frac{1}{\ell\log^{2}k}$. Thus  \[\Pr_0[\ell\in X(k)]=P(\ell,\ell+1) \frac{\mathcal{R}_{\ell \leftrightarrow \ell+1}}{\mathcal{R}_{\ell \leftrightarrow k^3}} \lesssim \frac{1}{\ell\log k}.\]
Summing over $\ell$ from $k$ to $k^{2}+k$ gives (\ref{eq:simple}). We now prove (\ref{eq:doob doob eyze khaboob}).

Let $B_{\ell}$ be the event that $\ell $ is the minimal element in $X'(k)$ (assuming $X'(k)\ne \eset$). Conditioning on
this more restricted event makes the walk between time $T_{\ell}$ and $T_{k^{3}}$
a random walk (with the same edge weights as $R$) conditioned not to return to $\ell$ by time $T_{k^3}$.\footnote{In other words, in terms of $X'(k) \cap ( \ell ,k^2+k] $, conditioning on $\ell$ being the minimal element in $X'(k)$ is the same as conditioning on $\ell \in X'(k)$.} In particular, for $m \in (\ell,k^2+k]$ we have that \[ \Pr_0[m \in X(k) \mid B_{\ell}  ]=\Pr_0[m \in X(k) \mid \ell \in X'(k)  ]=P(m,m+1) \Pr_{m+1}[T_{m}>T_{k^{3}} \mid T_{\ell}>T_{k^3}   ].\] We now apply the  Doob's transform corresponding to $T_{\ell}>T_{k^3} $. In the one dimensional
case it takes a particularly simple form: it transforms the weight
of the edge between $m$ and $m+1$ from $\omega_{m}$ to $\hat \omega_m:= \omega_{m}p_{m}p_{m+1}$
where $p_{i}:=\Pr_{i}[T_{k^3}<T_\ell] $. In our case $\omega_{m} \asymp m\log^{2}m$
and 
\begin{equation}
\label{e:pm}
p_{m}= \frac{\mathcal{R}_{\ell \leftrightarrow m}}{\mathcal{R}_{\ell \leftrightarrow k^3}} \asymp \sum_{n=\ell}^{m}\frac{1}{n\log^{2}n\mathcal{R}_{\ell \leftrightarrow k^3}}\asymp \frac{1}{\mathcal{R}_{\ell \leftrightarrow k^3}\log^{2}k}\sum_{n=\ell}^{m}\frac{1}{n}\asymp \frac{\log (m/\ell)}{\mathcal{R}_{\ell \leftrightarrow k^3}\log^{2}k}.
\end{equation}
 We now wish to estimate the conditional probability that a given $m \in (\ell,k^2+k]$ is in
$X(k)$.  We restrict ourselves to $m\in( \ell,\ell+ \sqrt{ k}]$. For  $m\in( \ell,2\ell]$  we have $\log (m/\ell)  \asymp  (m-\ell)/\ell$.
Again we calculate resistances: the resistance $\widehat{\mathcal{R}}_{m \leftrightarrow m+1}$ from $m \in( \ell,2\ell]$ to $m+1$ (w.r.t.\ the Doob transformed chain) satisfies 
\[\widehat{\mathcal{R}}_{m \leftrightarrow m+1} =\frac{1}{\hat \omega_m}= \frac{1}{\omega_{m}p_{m}p_{m+1}}\asymp 
\frac{\mathcal{R}_{\ell \leftrightarrow k^3}^2\log^{4}k}{m(\log m)^{2}(\log(m/\ell))^{2}}\asymp \frac{\ell \mathcal{R}_{\ell \leftrightarrow k^3}^2\log^{4}k}{(\log \ell)^{2}(m-\ell)^{2}}.
\]
We estimate the effective-resistance in the transformed chain from $m \in (\ell ,  \ell + \sqrt{ k} ] $ to $k^{3}$, $ \widehat{\mathcal{R}}_{m \leftrightarrow k^3} $, by
\begin{align*}
\widehat{\mathcal{R}}_{m \leftrightarrow k^3} & = \sum_{n=m}^{k^3 -1}\frac{1}{\omega_{m}p_{m}p_{m+1}} \lesssim \sum_{n=m}^{k^{3}}\frac{\mathcal{R}_{\ell \leftrightarrow k^3}^2\log^{4}k}{n(\log n)^{2}(\log(n/\ell))^{2}}  \\
 & \lesssim \sum_{n=m}^{2\ell-1}\frac{ \ell^2 \mathcal{R}_{\ell \leftrightarrow k^3}^2\log^{4}k}{n(\log n)^{2}(n-\ell)^{2}}+\sum_{n=2\ell}^{k^{3}}\frac{\mathcal{R}_{\ell \leftrightarrow k^3}^2\log^{4}k}{n(\log n)^{2}} \lesssim  \frac{\ell \mathcal{R}_{\ell \leftrightarrow k^3}^2\log^{4}k}{(\log \ell)^{2}(m-\ell)}.
\end{align*}
Thus
\[\frac{\Pr_0[m\in X(k)\,|\,\ell \in X(k')]}{P(m,m+1)}=\Pr_{m+1}[T_{m}>T_{k^{3}} \mid T_{\ell}>T_{k^3}   ]=\frac{ \widehat{\mathcal{R}}_{m \leftrightarrow m+1}}{ \widehat{\mathcal{R}}_{m \leftrightarrow k^3}}
\gtrsim  \frac{1}{m-\ell}.
\]
Since $P(m,m+1) \gtrsim 1 $, summing over $m \in (\ell,\ell+\sqrt{k}] $ 
gives (\ref{eq:doob doob eyze khaboob}).

Fix some $M \in \N $. Denote the event that the edge $\{k,k+1 \}$ is crossed less than $2M+1$ times by $A_k$. Precisely the same calculation as \eqref{eq:simple}-\eqref{e:1stmon} shows that $ \lim_{N \to \infty} \Pr_{N}[\cup_{n=N}^{\infty}A_n]=\lim_{N \to \infty} \Pr_{0}[\cup_{n=N}^{\infty}A_n]=0  $. Now it is easy to see that by the Markov property,  for all $N>0$ we have that $\Pr_{0}[\cap_{n=0}^{\infty}A_n^{c}] \ge \Pr_{0}[\cap_{n=0}^{N-1}A_n^{c}]\Pr_{N}[\cap_{n=N}^{\infty}A_n^{c}]  $, where $A_n^{c} $ denotes the complement of the event $A_n$, and so   $\Pr_{0}[\cap_{n=0}^{\infty}A_n^{c}]>0$ as desired. To see this, let $\tau$ be the first time by which every edge $\{k,k+1\} $ for $k < N $ has been crossed at least $2M+1$ times. Conditioned on $\cap_{n=0}^{N-1}A_n^{c} $ we have that $\tau<\infty $ and that $R_{\tau} \le N $. Thus the walk will reach $N $ at some time $\hat \tau \ge \tau $. Ignoring possible earlier crossing of some of the edges  $\{k,k+1\} $ for $k \ge N $ (ones which occurred prior to $\hat \tau$) we see that $\Pr_{0}[\cap_{n=N}^{\infty}A_n^{c} \mid \cap_{n=0}^{N-1}A_n^{c}] \ge \Pr_{N}[\cap_{n=N}^{\infty}A_n^{c}]  $.      
\end{proof}

Recall the definitions of the functions $g_s(k):=k\prod_{i=1}^s \log_{*}^{(i)}(k) $ and $\log_{*}^{(i)} $ from \eqref{e:logloglog}. The proof of Theorem \ref{thm:4} shall be carried out by induction.  We now describe a mechanism for analyzing a birth and death chain with weights  $w^{(s+1)}(k-1,k) \asymp g_{s+1}(k)(\log_{*}^{(s+2)}k)^{2}$ using one with edge weights  $w^{(s)}(k-1,k) \asymp g_{s}(k)(\log_{*}^{(s+1)}k)^{2}$. This is the key behind the induction step.
\begin{definition}
\label{def:induced}
Let $R$ be a birth and death chain. Let $\mathbf{S}':=(S_n')_{n=0}^{\infty}$ be the \emph{non-lazy version of the chain induced on} $\mathcal{D}:=\{2^k:k \in \N \} \cup \{0\}$.  By this we
mean the following: let $\tau_{0} := \inf \{t \ge 0:R_t \in \mathcal{D} \} $ and inductively set $S_i':=R_{\tau_{i}} $ and  $\tau_{i+1}:=\inf \{t>\tau_i : R_{t} \in \mathcal{D} \setminus \{R_{\tau_i} \} \} $ for all $i \ge 0$. We transform $\mathbf{S}'$ into a birth and death  $\mathbf{S}=\mathbf{S}(R):=(S_n)_{n=0}^{\infty}$  by setting $S_n=0$ if $S_n'=0$ and $S_n=k$ if $S_n'=2^{k-1}$.
\end{definition}
\begin{lemma}
\label{lem:scales} Let $s \in \N$. Let $R$ be a birth and death chain with weights $w(k-1,k) \asymp g_{s}(k)(\log_{*}^{(s+1)}k)^{2}$.  Then the weights corresponding to $\mathbf{S}(R)$ satisfy  $w^{\mathbf{S}}(k-1,k) \asymp g_{s-1}(k)(\log_{*}^{(s)}k)^{2}$.
\end{lemma}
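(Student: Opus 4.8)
The plan is to compute the edge weights of the induced chain $\mathbf{S}(R)$ directly from the standard formula for a birth-and-death chain induced on a subset of $\Z_+$, and then verify the claimed asymptotics by a logarithm-counting argument. Recall that for a reversible chain, the conductance $w^{\mathbf S}(k-1,k)$ of the induced chain on $\mathcal D$ equals the effective conductance between the consecutive points $2^{k-1}$ and $2^{k}$ of $\mathcal D$ in the original network $R$ restricted to the interval $[2^{k-1},2^{k}]$; equivalently, $w^{\mathbf S}(k-1,k) = \bigl(\mathcal R_{2^{k-1}\leftrightarrow 2^{k}}\bigr)^{-1}$, where $\mathcal R_{a\leftrightarrow b}=\sum_{i=a}^{b-1}\frac{1}{w(i,i+1)}$ for the one-dimensional network. (This is the same elementary series-law reduction used repeatedly in \S\ref{s:proofof4}, e.g.\ in the proof of Proposition~\ref{p::bd1}.) So the whole lemma reduces to estimating
\[
\mathcal R_{2^{k-1}\leftrightarrow 2^{k}}=\sum_{i=2^{k-1}}^{2^{k}-1}\frac{1}{w(i,i+1)}\asymp \sum_{i=2^{k-1}}^{2^{k}-1}\frac{1}{g_s(i)\,(\log_*^{(s+1)}i)^2}.
\]

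First I would handle the dyadic range $i\in[2^{k-1},2^{k})$ by observing that on such a range every factor $\log_*^{(j)}i$ with $j\le s+1$ is essentially constant: concretely, $\log_*^{(1)}i=\log i\asymp k$, and more generally $\log_*^{(j)}i\asymp \log^{(j-1)}k\asymp\log_*^{(j-1)}k$ up to bounded multiplicative error, throughout the block (for all $k$ large enough that the iterated logs are in their ``active'' regime; the finitely many small $k$ are absorbed into the $\asymp$). Hence on this block
\[
g_s(i)\,(\log_*^{(s+1)}i)^2\;\asymp\; i\cdot\prod_{j=1}^{s}\log_*^{(j-1)}k\cdot\bigl(\log_*^{(s)}k\bigr)^2
\;=\;i\cdot\Bigl(\prod_{j=0}^{s-1}\log_*^{(j)}k\Bigr)\bigl(\log_*^{(s)}k\bigr)^2,
\]
where $\log_*^{(0)}k:=k$. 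Summing $1/i$ over a dyadic block contributes $\sum_{i=2^{k-1}}^{2^k-1} i^{-1}\asymp 1$, so
\[
\mathcal R_{2^{k-1}\leftrightarrow 2^{k}}\;\asymp\;\Bigl(k\cdot\prod_{j=1}^{s-1}\log_*^{(j)}k\Bigr)^{-1}\bigl(\log_*^{(s)}k\bigr)^{-2}\;=\;\bigl(g_{s-1}(k)\bigr)^{-1}\bigl(\log_*^{(s)}k\bigr)^{-2},
\]
using $g_{s-1}(k)=k\prod_{j=1}^{s-1}\log_*^{(j)}k$. Taking reciprocals gives exactly $w^{\mathbf S}(k-1,k)\asymp g_{s-1}(k)(\log_*^{(s)}k)^2$, as claimed.

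It remains only to justify that $\mathbf S(R)$ genuinely has these conductances — i.e.\ that the relabelling $2^{k-1}\mapsto k$ turns the induced chain on $\mathcal D$ into a birth-and-death chain on $\Z_+$ with the stated symmetric weights. This is immediate because $\mathcal D$ is itself a ``path'' inside $\Z_+$ (consecutive elements $0,1,2,4,8,\dots$), the chain induced on a subset of a line is still nearest-neighbour on that subset, and restriction-then-series-reduction of a reversible network yields a reversible network whose conductances are the reciprocals of the series resistances computed above; symmetry $w^{\mathbf S}(k-1,k)=w^{\mathbf S}(k,k-1)$ is inherited from symmetry of $w$. I expect the only genuinely delicate point to be the uniformity of the estimate $\log_*^{(j)}i\asymp\log_*^{(j-1)}k$ across the whole dyadic block and across all $k$ simultaneously, including the boundary cases where some iterated logarithm is about to drop below $e$ and switch to the value $1$; this should be dispatched by noting that the switch happens only for boundedly many values of $k$ (for each fixed $j$), which are then swallowed by the implied constants in $\asymp$. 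Everything else is the routine dyadic-sum bookkeeping sketched above.
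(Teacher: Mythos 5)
Your argument is correct, and it is exactly the calculation the paper has in mind but omits (``We omit the details of this easy calculation''): the induced chain on $\mathcal D$ is the series-reduced network, so $w^{\mathbf S}$ is the effective conductance of a dyadic block, and the block-wise estimate $\log_*^{(j)}i\asymp\log_*^{(j-1)}k$ plus $\sum_{i=2^{k-1}}^{2^k-1}i^{-1}\asymp 1$ gives the stated asymptotics; this is the same network-reduction device the authors use explicitly in the proof of Lemma \ref{lem:KSSscales}. The only cosmetic slip is an off-by-one in the relabelling ($\{k-1,k\}$ in $\mathbf S(R)$ corresponds to the interval $[2^{k-2},2^{k-1}]$ rather than $[2^{k-1},2^{k}]$), which is harmless since $g_{s-1}(k\pm1)(\log_*^{(s)}(k\pm1))^2\asymp g_{s-1}(k)(\log_*^{(s)}k)^2$.
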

We omit the details of this easy calculation. As preparation for the induction basis we need the following corollary, whose proof uses a similar mechanism as the one behind the induction step (here with Proposition \ref{p::bd1} as the ``input").
\begin{corollary}
\label{cor:s=1}
Let $R$ be a birth and death chain  weights $w(k-1,k):=\omega_{k-1} \asymp g_{1}(k) (\log_*^{(2)}k)^2$.
Then for every fixed $\delta \in (0,1)$ there exists some $p=p(\delta)>0$ such that with probability at least $p$ the walk $R$ crosses each edge $\{k,k+1 \}$ at least $2 \lceil k^{\delta} \rceil +1$ times for all $k$. 
\end{corollary}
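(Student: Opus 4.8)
The plan is to mimic the mechanism of Proposition \ref{p::bd1} one scale up, using Proposition \ref{p::bd1} itself as the ``input''. Concretely, consider the chain $\mathbf{S}=\mathbf{S}(R)$ obtained from $R$ by the induced-chain construction of Definition \ref{def:induced} on $\mathcal{D}=\{2^k:k\in\N\}\cup\{0\}$. By Lemma \ref{lem:scales} (applied with $s=1$), the weights of $\mathbf{S}$ satisfy $w^{\mathbf{S}}(k-1,k)\asymp k(\log_* k)^2\asymp k\log^2 k$ (for $k$ large, where $\log_* k=\log k$), so $\mathbf{S}$ is of exactly the type covered by Proposition \ref{p::bd1}. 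Hence, for any prescribed $M$, with positive probability $p(M)>0$ the chain $\mathbf{S}$ crosses every edge $\{k,k+1\}$ at least $2M+1$ times. The key point is then a ``translation'' step: a single crossing of the edge $\{k,k+1\}$ by $\mathbf{S}$ corresponds, for $R$, to the event that $R$ travels between the levels $2^{k-1}$ and $2^k$ without touching $\mathcal{D}$ in between, and during such an excursion $R$ must cross \emph{each} intermediate edge $\{j,j+1\}$ with $2^{k-1}\le j<2^k$ at least once (in fact once in each direction, so an even number of times, hence at least twice). Consequently, if $\mathbf{S}$ crosses $\{k,k+1\}$ at least $2M+1$ times, then $R$ crosses each edge $\{j,j+1\}$ with $2^{k-1}\le j<2^k$ at least $2M+1$ times as well.

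The first step I would carry out is to make this translation precise. For $j$ with $2^{k-1}\le j<2^k$, each crossing of the edge $\{k,k+1\}$ of $\mathbf{S}$ is realized by a segment of the $R$-trajectory joining $\{2^{k-1}\}$ to $\{2^k\}$ (or vice versa) with no visit to $\mathcal{D}$ strictly in between; any such segment, being a nearest-neighbour path on $\Z_+$ from a point $\le 2^{k-1}=j-(\text{something}\ge 0)$ actually from $2^{k-1}$ to $2^k$, necessarily uses the edge $\{j,j+1\}$ (it must pass level $j$ going up and the level $2^k>j$, so it crosses $\{j,j+1\}$). Distinct crossings of $\{k,k+1\}$ by $\mathbf S$ come from disjoint time-segments of $R$, so the crossing counts add: $N_R(j,j+1)\ge N_{\mathbf S}(k,k+1)$ whenever $2^{k-1}\le j<2^k$. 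I would also handle the low-index edges $\{j,j+1\}$ with $j<2^{0}=1$, i.e. $j=0$: the edge $\{0,1\}$ is crossed by $R$ at least as often as the edge $\{0,1\}$ of $\mathbf S$, which is again $\ge 2M+1$ by Proposition \ref{p::bd1}.

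The second step is to choose $M$ correctly so as to beat $k^\delta$. Since we need $R$ to cross $\{j,j+1\}$ at least $2\lceil j^\delta\rceil+1$ times for \emph{all} $j$, and the translation gives only the uniform lower bound $2M+1$ on the whole block $[2^{k-1},2^k)$, a fixed $M$ is not enough — we need the number of $\mathbf S$-crossings of $\{k,k+1\}$ to grow like $2^{k\delta}$. So instead of invoking the ``fixed $M$'' clause of Proposition \ref{p::bd1} I would strengthen that clause: rerun the Borel--Cantelli argument in the proof of Proposition \ref{p::bd1} with $M=M_k$ growing slowly enough. Precisely, letting $A_k^{(M)}$ be the event that the edge $\{k,k+1\}$ of $\mathbf S$ is crossed fewer than $2M+1$ times, the calculation \eqref{eq:simple}--\eqref{e:1stmon} shows $\Pr_0[A_k^{(M)}]\lesssim M/\log(\text{scale})$ when $\mathbf S$ is run on the relevant window; taking $M=M_k:=\lceil 2^{k\delta}\rceil$ one checks (this is the routine part I would not grind through) that $\sum_k \Pr_0[A_k^{(M_k)}]<\infty$, because the relevant ``local regeneration'' estimates for the chain with weights $\asymp k\log^2 k$ over a window around scale $k$ produce a probability that still decays geometrically in $k$ after multiplication by $M_k$ (the window for the $k$-th scale is of exponential length $\approx 2^k$, so the $\log$-of-the-scale factor is $\approx k$ and the bound is $\lesssim 2^{k\delta}/k$, which is not summable — so one must localize more carefully, e.g. over a window of length $2^{Ck}$ with $C$ chosen after $\delta$, making the bound $\lesssim 2^{k\delta}/2^{c'k}$). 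Then, exactly as in the last paragraph of the proof of Proposition \ref{p::bd1}, a Markov-property/restart argument upgrades ``a.s.\ only finitely many bad edges'' to ``with positive probability \emph{no} bad edge'', yielding $p(\delta)>0$ with $\Pr_0[\,\mathbf S\text{ crosses }\{k,k+1\}\text{ at least }2M_k+1\text{ times for all }k\,]\ge p(\delta)$. Translating back through the first step and using $2M_k+1\ge 2\lceil j^\delta\rceil+1$ for all $j<2^k$ (since $j^\delta<2^{k\delta}\le M_k$) completes the proof.

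The main obstacle I expect is the quantitative bookkeeping in this last step: choosing the window sizes and the growth rate $M_k=\lceil 2^{k\delta}\rceil$ so that the Borel--Cantelli sum genuinely converges. The naive estimate $\Pr_0[A_k^{(M_k)}]\lesssim M_k/\log(\text{window length})$ is only $\lesssim 2^{k\delta}/k$, which diverges; the fix is to work over a window of exponential (not polynomial) length around the $k$-th dyadic scale, for which the ``$\log$ of the window'' becomes a constant multiple of $k$ but — more importantly — the effective-resistance/first-moment estimate for the induced chain picks up a genuinely exponentially small factor, so that after multiplying by $M_k$ one still has a summable series. Verifying that this localization is compatible with the induced-chain construction (i.e.\ that one may run $\mathbf S$ ``up to scale $k^3$'' in the sense of Proposition \ref{p::bd1} while controlling $R$'s crossings) is the delicate point; everything else is a direct transcription of the proof of Proposition \ref{p::bd1} together with Lemma \ref{lem:scales}.
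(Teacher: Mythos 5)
Your set-up (the induced chain $\mathbf{S}=\mathbf{S}(R)$, Lemma \ref{lem:scales}, and Proposition \ref{p::bd1}) matches the paper's, but your translation step throws away the essential quantitative gain, and the repair you propose cannot work. You count only \emph{one} crossing of each edge $\{j,j+1\}$, $2^{k-1}\le j<2^k$, per crossing of the corresponding edge by $\mathbf{S}$, and then try to compensate by demanding that $\mathbf{S}$ cross its $k$-th edge at least $2\lceil 2^{k\delta}\rceil+1$ times for all $k$. That event has probability zero: $\mathbf{S}$ is a transient birth and death chain with weights $\asymp k\log^2 k$, so by the first part of Theorem \ref{thm:4} almost surely $\sum_k 1/N_{\mathbf S}(k,k+1)=\infty$, which is incompatible with $N_{\mathbf S}(k,k+1)\gtrsim 2^{k\delta}$ for all $k$. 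More concretely, the number of crossings of $\{k,k+1\}$ by $\mathbf{S}$ is dominated by a geometric variable with mean $\asymp k\log k$ (the escape probability from $k$ is $\asymp 1/(k\log k)$), so $\Pr\bigl[N_{\mathbf S}(k,k+1)\ge 2^{k\delta}\bigr]\to 0$ extremely fast; the terms of your Borel--Cantelli sum tend to $1$, not $0$, and no choice of window or localization changes this. The obstruction is structural: crossing numbers of a transient birth and death chain cannot grow exponentially, which is the whole point of Theorem \ref{thm:4}.

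The missing idea is that the factor $k^{\delta}$ must come from \emph{within a single block traversal of $R$}, not from the number of $\mathbf{S}$-crossings. The paper conditions on the event $G$ that $\mathbf{S}$ crosses every edge at least $2M+1$ times for a \emph{fixed} $M=M(\delta)$, and then uses that inside $[2^k,2^{k+1}]$ the weights of $R$ are comparable, so during one round-trip journey of $R$ from $2^k$ to $2^{k+1}$ and back the number of crossings of any fixed edge $\{i,i+1\}$ of the block dominates a geometric variable with mean of order $2^k$; hence the probability that this edge is crossed fewer than $2^{k\delta+2}$ times in one round trip is $\asymp 2^{-k(1-\delta)}$, and in $M$ round trips is $\lesssim 2^{-kM(1-\delta)}$ (this holds even conditionally on $\mathbf S$, by the Markov property). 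Choosing $M\ge 2/(1-\delta)$ beats the union bound over the $O(2^k)$ edges of the block, the resulting series is summable in $k$, and the restart argument from the end of the proof of Proposition \ref{p::bd1} upgrades ``finitely many bad edges a.s.'' to ``no bad edge with positive probability.'' So a fixed $M$ depending only on $\delta$ suffices once you count the back-and-forth excursions of $R$ inside each dyadic block; your proposal discards exactly those excursions.
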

\begin{proof}
Fix some $\delta \in (0,1)$. Let $\mathbf{S}=\mathbf{S}(R)=(S_n)_{n=0}^{\infty}$ be as above. Let $M=M(\delta)$ be some sufficiently large constant, to be determined later. By Proposition \ref{p::bd1} and Lemma \ref{lem:scales} we have that with positive probability $\mathbf{S} $ crosses each edge at least $2M+1$ times. Let $G$ be this event. We condition on $G$.

 Observe that  each round-trip journey of $\mathbf{S}$ from $k+1$ to $k+2$ and then (eventually) back to $k+1$ (possibly visiting $k+2$ multiple times before returning to $k+1$) corresponds to a round-trip journey of $R$ from $2^{k}$ to $2^{k+1}$ and then (eventually) back to $2^{k}$ (possibly visiting $2^{k+1}$ multiple times before returning to $2^{k}$). Using the fact that all of the edge weights of the edges in $[2^k,2^{k+1}]$ are within constant factor from one another, it is easy to see that the probability that during such a round-trip journey  an edge $\{i,i+1\} $ in $[2^{k},2^{k+1}]$ is crossed less then $  2^{k\delta  +2}$ times is  $\asymp 2^{-k(1-\delta)} $ (uniformly in $i$ and $k$). We give the details of this easy calculation for the sake of completeness. 

We consider the cases $i \in [2^k,\sfrac{3}{2}2^k]$ and  $i \in (\sfrac{3}{2}2^k,2^{k-1})$. In the  former case, the   effective resistance from $i+1 $ to $2^{k+1}$ is $\sfrac{|2^{k+1}-(i+1)|}{\omega_i+\omega_{i-1}}\asymp  \sfrac{2^{k}}{\omega_i+\omega_{i-1}} $ and so  $|\{t<T_{2^{k+1}} :R_{t+1}=i,R_{t} =i+1\}|$ (for the chain $R$ started at $i+1$) has a geometric distribution with mean of order $2^k $. In the latter case, we consider the Doob's transform of the chain conditioned on $T_{2^k}<\infty$. As $p_{j}:=\Pr_j[T_{2^k}<\infty  ] \ge \Pr_j[T_{2^k}<T_{2^{k+2}}  ] $ is uniformly bounded from below for all $j \in [2^k,2^{k+1}] $, the edge weights  $\hat \omega_m:= \omega_{m}p_{m}p_{m+1}$
for $\{m,m+1\}$  in this  Doob's transform (see the paragraph preceding \eqref{e:pm}) are also within constant factor from one another for $m \in [2^k,2^{k+1}]$. It follows that conditioned on returning to $2^k$ after reaching $i$, the number of times the edge $\{i,i+1\}$ is crossed in the direction from $i$ to $i+1$, prior to the return time to $2^k$, also has a geometric distribution with mean of order $|i-2^{k}|\asymp2^k $.

 By the Markov property  this remain true even given $\mathbf{S}$. Hence, given $G$, the probability that  an edge $\{i,i+1\} $ in $[2^{k},2^{k+1}]$ is not crossed at least $  2^{k\delta  +2}$ times in any of the first $M$ round-trips of $\mathbf{S} $ from $k+1$ to $k+2$ and then (eventually) back  is  $\lesssim 2^{-kM(1-\delta)} $ (uniformly in $i$ and $k$). Taking $M \ge \frac{2}{1-\delta}$ and applying a union bound over all $ i \in [2^k,2^{k+1}-1]$ the proof is concluded in an analogous manner to the way the proof of Proposition \ref{p::bd1} was concluded. Namely, if $R$ and  $\mathbf{S} $ are coupled as above, $D$ is the event that every edge is crossed by   $\mathbf{S} $ at least $2M+1$ times, and $A_k $ is the event that the edge $\{k,k+1\} $ is crossed by $R$ less than  $2 \lceil k^{\delta} \rceil +1$ times,  then $\sum_{k \ge 0 } \Pr_0[A_k \mid D]< \infty $ and so $\lim_{N \to \infty } \Pr_0[\cap_{k \ge N } A_k^{c} \mid D]=1 $. Thus  $\lim_{N \to \infty } \Pr_0[\cap_{k \ge N } A_k^{c} ]>0 $.  Finally, as in the end of the proof of Proposition \ref{p::bd1}, we have that  $\Pr_{0}[ \cap_{n=0}^{\infty}A_n^{c} ] \ge \Pr_{0}[ \cap_{n=0}^{N-1}A_n^{c}]  \Pr_{N}[\cap_{n=N}^{\infty}A_n^{c}]$ for all $N$. As $\Pr_{0}[ \cap_{n=0}^{N-1}A_n^{c}]>0$ for all $N$, this concludes the proof.      
\end{proof}
For the induction step we need one additional lemma.
\begin{lemma}
\label{lem:KSSscales} Let $s \in \N$. Let $R$ be a birth and death chain with weights $w(k-1,k)\asymp g_{s}(k)(\log_{*}^{(s+1)}k)^{2}$. For every $L \in (0,\infty) $  there exists some $p=p(L,s)>0$ such that  for all $k$ the following hold 
\begin{itemize}
\item[(i)] Between any transition from $2^k$ to $2^{k+1}$, with probability at least $p$ every edge in $[2^{k},3 \cdot 2^{k-1}]$ is crossed at least $L \cdot 2^k $ times.
\item[(ii)] Between any transition from $2^{k+1}$ to $2^{k}$, with probability at least $p$ every edge in $[3 \cdot 2^{k-1},2^{k+1}]$ is crossed at least $L \cdot 2^k $ times.
\end{itemize}  
\end{lemma}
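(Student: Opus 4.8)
The plan is to reduce both statements to a one-scale estimate about the birth and death chain $R$ restricted (via a suitable Doob transform) to the dyadic block $[2^{k},2^{k+1}]$, using that within such a block all edge weights are comparable up to a constant factor depending only on $s$. First I would observe that $w(i-1,i)\asymp g_s(i)(\log_*^{(s+1)}i)^2$ is slowly varying in the sense that for $i,j\in[2^k,2^{k+1}]$ we have $w(i-1,i)\asymp w(j-1,j)$, with the implied constant uniform in $k$. This is because $g_s$ is within a constant factor of $2^k\prod_{m=1}^s\log_*^{(m)}(2^k)$ throughout the block (each iterated logarithm changes by at most a bounded factor when the argument doubles), and similarly for the $(\log_*^{(s+1)})^2$ term. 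Consequently the block $[2^k,2^{k+1}]$, as a network, is roughly isometric (with constants depending only on $s$) to a segment of SRW on $\Z$ of length $2^k$ after dividing all weights by the common scale.

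For part (i): condition on the event that $R$, started at $2^k$, reaches $2^{k+1}$ before returning to $2^k$ — equivalently, consider the excursion realizing the transition from $2^k$ to $2^{k+1}$. I would analyze this via the Doob transform of the chain on the block $[2^k,2^{k+1}]$ conditioned to hit $2^{k+1}$ before $2^k$; as in the paragraph preceding \eqref{e:pm}, this transforms $\omega_m$ into $\hat\omega_m=\omega_m p_m p_{m+1}$ where $p_j=\Pr_j[T_{2^{k+1}}<T_{2^k}]$. Since $p_j$ is (by the comparability of weights) within a constant factor of $(j-2^k)/2^k$, on the subinterval $[2^k,3\cdot 2^{k-1}]$ — where $p_j\asymp (j-2^k)/2^k$ can be as small as $\asymp 1/2^k$ near the left end — the key point is that an edge $\{i,i+1\}$ with $i\in[2^k,3\cdot 2^{k-1}]$ is crossed, during this conditioned excursion, a number of times that stochastically dominates a geometric random variable with mean of order $2^k$ (the relevant ratio of effective resistances $\widehat{\mathcal R}_{i\leftrightarrow i+1}/\widehat{\mathcal R}_{i\leftrightarrow 2^{k+1}}$ being of order $1/2^k$). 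Hence, for fixed $L$, the probability that some edge in $[2^k,3\cdot 2^{k-1}]$ is crossed fewer than $L\cdot 2^k$ times is, by a union bound over the $\asymp 2^k$ edges and the geometric tail, at most a constant $<1$ depending only on $L$ and $s$ — provided we allow ourselves a further fixed number of excursions, or simply note that one excursion already gives a uniformly positive lower bound for the complementary event once we absorb the union-bound loss. Actually the cleanest route: for each individual edge the probability of at least $L\cdot 2^k$ crossings is bounded below by a constant $c(L,s)$; to beat the $2^k$ union bound I would instead bound the probability that the total number of crossings of all these edges falls short, using that the crossings of consecutive edges are highly correlated (if the walk crosses edge $\{i,i+1\}$ it has crossed $\{i-1,i\}$ at least as often on that side), so it essentially suffices that the leftmost edge $\{2^k,2^k+1\}$ is crossed $\gtrsim L\cdot 2^k$ times, which holds with probability bounded below uniformly. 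Part (ii) is symmetric: condition on the transition from $2^{k+1}$ to $2^k$, Doob-transform the block conditioned to hit $2^k$ before $2^{k+1}$, and run the identical argument with the roles of the two endpoints swapped, so now the edges in $[3\cdot 2^{k-1},2^{k+1}]$ (those near the new "far" endpoint, in the transformed chain, near the "start") enjoy geometric crossing counts of mean order $2^k$.

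The main obstacle I anticipate is the union bound over the $\asymp 2^k$ edges in the block: naively, $2^k$ edges each failing with constant probability would kill a uniform-in-$k$ lower bound. The resolution — which I would make precise — is the monotonicity/correlation structure of edge crossings in a birth and death chain: the number of times $R$ traverses $\{i,i+1\}$ is, during a single excursion between the block endpoints, a monotone function of $i$ on each of the two monotone "legs" ($i$ below and above the near endpoint of the transformed chain), and in fact crossing edge $\{i,i+1\}$ at least $N$ times forces crossing every edge between it and the start at least $N$ times. Thus the bad event "some edge crossed fewer than $L\cdot 2^k$ times" is essentially the single event "the critical boundary edge ($\{2^k,2^k+1\}$ for (i), $\{2^{k+1}-1,2^{k+1}\}$ for (ii)) is crossed fewer than $L\cdot 2^k$ times", whose probability is bounded away from $1$ uniformly in $k$ by the geometric estimate. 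This collapses the union bound to a single edge and yields the desired uniform $p=p(L,s)>0$. The remaining computations — effective resistances $\mathcal R_{i\leftrightarrow j}=\sum 1/\omega_m$ within the block, the form of $p_j$, and the resulting geometric means — are the same elementary one-dimensional calculations already carried out around \eqref{e:pm} and in Corollary \ref{cor:s=1}, and I would not reproduce them in full.
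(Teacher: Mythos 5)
There is a genuine gap, and it sits exactly at the point you identify as the main obstacle. Your resolution of the union bound rests on the claim that, within a single passage, crossing counts are monotone along the block --- that crossing $\{i,i+1\}$ at least $N$ times forces $N$ crossings of every edge between it and the starting endpoint, so that the bad event collapses to the single event about the boundary edge. This is false in both directions: the path $2^k \to \cdots \to i \to i+1 \to i \to i+1 \to \cdots \to 2^{k+1}$ crosses $\{i,i+1\}$ many times while crossing $\{2^k,2^k+1\}$ only once, and a path that makes $N$ short excursions $2^k \to 2^k+1 \to 2^k$ before marching straight to $2^{k+1}$ crosses the boundary edge $2N+1$ times while crossing each interior edge exactly once. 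So the bad event really is a union over $\asymp 2^k$ dependent but non-nested per-edge events, and it cannot be reduced to one edge by any pathwise monotonicity.

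Your per-edge estimate is also wrong near the left endpoint under the conditioning you propose. If you condition the excursion to reach $2^{k+1}$ before returning to $2^k$, then with $\hat\omega_m=\omega_m p_m p_{m+1}$ and $p_j\asymp (j-2^k)/2^k$ one gets $\widehat{\mathcal R}_{i\leftrightarrow i+1}/\widehat{\mathcal R}_{i\leftrightarrow 2^{k+1}}\asymp 1/(i-2^k)$, not $1/2^k$; the number of crossings of $\{i,i+1\}$ in that single conditioned excursion is geometric with mean $\asymp i-2^k$, which is $O(1)$ for edges near $2^k$ (indeed the conditioned excursion crosses $\{2^k,2^k+1\}$ exactly once). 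The order-$2^k$ crossings of the edges near $2^k$ come precisely from the $\asymp 2^k$ returns to $2^k$ before $T_{2^{k+1}}$, which your conditioning throws away. The paper instead keeps the whole segment up to $T_{2^{k+1}}$, records $K_i$ as in \eqref{e:k65}, and treats $(K_i)_i$ as a near-critical branching process with immigration (geometric offspring of mean $2-O(1/n)$) in the spirit of Kesten--Kozlov--Spitzer \cite{KKS}: after securing $K_0\ge 3Ln$ from a single geometric of mean $\asymp n$, it couples the generation sizes with a random walk of drift $-O(1/n)$ and uses a Brownian-motion-with-drift (Cameron--Martin) estimate to show that the entire vector $(K_i)_{i\le n}$ stays above $Ln$ with probability bounded below uniformly in $n$. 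Some argument of this kind, controlling the whole trajectory of crossing counts simultaneously rather than a single edge, is the essential content of the lemma and is missing from your proposal.
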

\begin{proof}
We only prove (i) as the proof of (ii) is analogous. Observe that the edge-weights in the interval $[2^k,2^{k+1}]$ are within constant factor of one another. The problem is thus reduced to the following setup. Consider a birth and death chain on $\{0,1,\ldots 2n\}$ started at $0$ whose edge weights satisfy (*) $1/C \le c(i-1,i) \le C$ for all $i \in [2n]$ (think of $n$ as $2^{k-1}$). Let 
\begin{equation}
\label{e:k65}
K_i=K_i^{(n)}:=|\{t < T_{2n}:(X_{t},X_{t+1})=(i+1,i) \}|
\end{equation}
 be the number of transitions from $i+1$ to $i$ prior to $T_{2n}$. We need to show that for every $C \ge 1$ (as in (*)) and $L>0 $ there exists some $p=p(C,L)>0$ such that the probability that $\min \{K_i^{(n)}:0 \le i<n \} \ge Ln$ is at least $p$ (for all $n \in \N $).   

We now argue that it suffices to consider the case that $c(i-1,i)=1$  for all $i<2n$ while $1 \le c(2n-1,2n) \le 3$. Indeed, by subdividing some edges if necessary (cf.\ \cite{traces}) we can find a sequence $x_0=0,x_1,\ldots x_{2m}=2n$, where $m \ge  n/(2C^2) - 2$ such that (1) the distance of $x_i$ from $0$ is increasing in $i$ and (2) the effective-resistance between $x_{i-1}$ and $x_i$ is exactly $C$ for all $i <2m$ while for $i=2m $ it is between $C$ and $3C$.   Instead of studying  $\mathbf{K}:=(K_j)_{j=0}^{n}$  we may consider the number of transitions between these points. To be precise, for every edge $\{i,i+1\}$ which was not subdivided and is in the interval between $x_j$ and $x_{j+1}$, we have that every journey of the new chain from $x_j$ to $x_{j+1}$, or vice-versa, corresponds to a crossing of $\{i,i+1\}$ in the original chain. This is not the case for edges that were subdivided. However if $\{i,i+1\}$ was subdivided, with $x_j$ added between $i$ and $i+1$, every journey of the new chain from $x_{j+1}$ to $x_{j-1}$ (or vice-versa) does correspond to a crossing of  $\{i,i+1\}$ in the original chain. Here, by a ``journey from  $x_{j+1}$ to $x_{j-1}$" we mean that the walk was at $x_{j+1}$, reached $x_j$ and then reached $x_{j-1}$ before returning to $x_{j+1}$. The notion of a ``journey from  $x_{j-1}$ to $x_{j+1}$" is analogously defined. We note that this notion is not the same as our usage of the term ``a journey from $0$ to $2n$" in which we do not require the walk to not return to $0$ before reaching $2n$.   

By  performing a network reduction, collapsing the interval between $x_{j}$ and $x_{j+1}$ to a single edge, we obtain an auxiliary birth and death chain with edge weights as described above. Let us label $x_j$ by $j$, and let us also denote the length of the corresponding interval by $2n$, rather than $2m$ (as opposed to the initial setup, now the edge weights are all equal to  1, apart perhaps from the one between $2n-1$ and $2n$ which is between $1$ and $3$). By the discussion above, it suffices to argue that for every $L>0$ there exists some $p'(L)>0$ (independent of $n$) such that with probability at least $p'(L)$, during a single journey from $0 $ to $2n$, for all $i  \in [0, n] $  the number of journeys between $i$ and $i+2$ (i.e.\ an $i$ to $i+1$ transition followed by an $i+1$ to $i+2$ transition, or vice-versa) is at least $L n$. 

We now argue that in  fact it suffices to show  that
for every $L>0$ there exists some $p''(L)>0$ (independent of $n$) such that with probability at least $p''(L)$, during a single journey from $0 $ to $2n$, for all $i  \in [0, n] $  the number of crossings of  $\{i,i+1\}$ is at least $L n$. Indeed, there exist absolute constants $c,c',\hat C>0$ such that for all $n$ and all $i \in [0,n]$ conditioned on the number of crossing of  $\{i,i+1\}$ during a single journey from $0 $ to $2n$ being at least $L n$, the conditional probability that the number of journeys between $i$ and $i+2$ (in the above sense) during a single journey from $0 $ to $2n$ is at most $c'L n $ is at most $\hat C e^{-cL n}$ (in fact, we can take $c'$ to be any number in $(0,1/4)$). We omit the details of this calculation. This concludes the justification of the reduction to the case where the edge weights are as described above. 

We now turn to the analysis of the probability that $\min \{K_i^{(n)}:0 \le i<n \} \ge Ln$, where $K_i^{(n)}$ is as in \eqref{e:k65}.  
Fix some $L \ge 1$. Observe that $K_0^{(n)}$ follows a Geometric distribution with mean $\asymp n $ and hence we may condition on the event that $K_0^{(n)} \ge 3Ln$ (as the corresponding probability is bounded from below, uniformly in $n$ for each fixed $L$).          

To analyze $\mathbf{K} $ when the edge weights are all equal to 1 (apart perhaps from the last one, which is between 1 and 3)  we use a Kesten-Kozlov-Spitzer type argument \cite{KKS}. In other words, we study the process $\mathbf{K}$ as a time-inhomogeneous Markov process.\footnote{In fact, it is a time-inhomogeneous branching process with time-inhomogeneous immigration.} Observe that $K_{j+1}$ can be decomposed into a sum of $K_j+1$ terms as follows:
\begin{equation}
\label{e:decomposition0}
K_{j+1}=J_{j+1}+\sum_{\ell=1}^{K_j}\xi_\ell^{(j+1)},
\end{equation}
where $\xi_\ell^{(j+1)}$ is the number of transitions from $j+2$ to $j+1$ between the $(2\ell-1)$th and $(2\ell)$th crossings of $\{j,j+1\}$  which occurred prior to $T_{2n}$  (i.e.\ during the $\ell$th excursion in $[j,2n)$ from $j$ to $j$ prior to $T_{2n}$) and $J_{j+1}$ is the number of transitions from $j+2$ to $j+1$ after the last visit to $j$ prior to $T_{2n}$. Given $K_j=m$ we have that $\xi_1^{(j+1)}+1,\xi_2^{(j+1)}+1,\ldots, \xi_m^{(j+1)}+1 $ are i.i.d.\ Geometric random variables whose mean $\alpha_{j+1}=\alpha_{j+1}(n) $ depends on $j$ but not on $m$, while the law of $J_{j+1}+1$, which is also Geometric of some mean  $\beta_{j+1}=\beta_{j+1}(n)$, is independent of $K_j$.  Using effective-resistance calculations in the corresponding Doob transformed chains, it is easy to verify that $2-\alpha_j \asymp \frac{1}{n} $ and that $\beta_j-1 \gtrsim 1  $ (for concreteness, $\beta_j > 9/8$) uniformly in $n$ and $j \in [n]$. 

For our purposes we may ignore the terms $J_j$. That is, we may consider  a subcritical branching process $B_0,B_1,\ldots$ which has initial population size $B_0= \lceil 3Ln \rceil $ (recall that we may assume that $K_0^{(n)} \ge 3Ln$) and offspring distribution $\xi$, where $\xi +1 \sim  $ Geometric with mean $2-C_0/n$, where $C_0$ is some constant chosen so that $ 2-C_0/n\le \min_{j \in [n] } \alpha_j(n) $.  It suffices to show that with probability bounded from below (uniformly in $n$) we have that up to the $n$-th generation all generations are of size at least $Ln$.

Instead of studying the above subcritical branching process, we study a related random walk. One may label all of the individuals in all generations of the branching process by the set $\N$ in a manner that the generation number to which the individual labeled $j$ belongs to is non-decreasing in $j$ (i.e., the individuals from the $i$-th generation are labeled by $B_0+B_1+\cdots +B_{i-1}+1,\ldots,B_{0}+B_1+\cdots +B_{i}  $). We couple the branching process $B_0,B_1,\ldots,$ with a random walk $Z_j(n):= \sum_{i=1}^{j} Y_{i}(n) $, where $Y_{1}(n)+2,Y_2(n)+2,\ldots  $ are i.i.d.\ Geometric random variables with mean  $2- C_0/n$ as follows. We take the number of offspring of individual $j$ to be $Y_j(n)+1 $. Then $Z_{B_0+B_1+\cdots +B_j}(n)=B_{j+1}-B_{0} $ for all $j \ge 0$. Thus it suffices to argue that for all fixed $C_1,L \ge 1 $ the walk $(Z_j(n))_{j=0}^{\lceil C_1 (Ln)^2 \rceil}$ remains in $[-2Ln,2Ln]$ with probability bounded from below uniformly in $n$. Indeed, if this occurs for $C_1=3 $ and $L \ge 1$ then necessarily $B_0+B_1+\cdots +B_n \le  C_1 (Ln)^2   $.

  Observe that $S_n(t):=\frac{1}{n} [(n^{2}t-\lfloor n^{2}t \rfloor)Z_j(\lceil n^{2}t \rceil )+(\lceil n^{2}t \rceil-n^{2}t)Z_j(\lfloor n^{2}t \rfloor )  ] $ converges in distribution to a Brownian motion with a constant drift and some constant variance, where $t \in [0,C_1L^{2}]$. Let $W(t)$ be a Brownian motion with such a drift. By the Cameron-Martin Theorem the laws of a Brownian motion and of a Brownian motion with the same variance and with some fixed drift (both taken in some finite time interval) are mutually absolutely continuous.   It follows that for every fixed $C_1$, the probability that $W(t) \in [-L,L]$ for all $t \in [0,C_1L^{2}] $ is positive.   Thus the probability that $S_n(t) \in [-2L,2L]$ for all $t \in [0,C_1L^{2}] $ is bounded from below, uniformly in $n$. 
\end{proof}

\subsection{Proof of Theorem \ref{thm:ata melagleg alay}}

\emph{Proof:}
Let $s \in \Z_+$. Let $\mathbf{S}=\mathbf{S}(R)$ be as in Definition \ref{def:induced}. By Lemma \ref{lem:scales}  the edge weight between $k$ and $k+1$ w.r.t.\ $\mathbf{S}$ is $\asymp g_{s+1}(k)(\log_{*}^{(s+2)}k)^{2} $. We argue by induction on $s$. We start with the case $s=0$. In this case,  by Corollary \ref{cor:s=1} with positive probability $\mathbf{S} $ crosses every edge $\{k,k+1\}$ at least $2 \sqrt{k}+1 $ times.  Let $G$ be
this event. We condition on $G$.  We call a round-trip journey of $\mathbf{S}$ from $k+1$ to $k+2$ and then (eventually) back to $k+1$ (possibly returning to $k+2$ multiple times prior to that) \emph{successful} if for all $i \in [2^k,2^{k+1}-1]$ the edge $\{i,i+1\}$ is crossed at least $2^k$ times by $R$ (during its corresponding round-trip journey from $2^k$ to $2^{k+1}$ and then back).

 By Lemma \ref{lem:KSSscales} there exists some $p$  (the same $p$ for all $k$) such that each journey is successful with probability at least $p$.   Moreover, this holds even if we condition
on $\mathbf{S}$. In particular, conditioned on $G$ the probability that there are less than $\half p \sqrt{k} $ successful journeys from $k+1$ to $k+2$ and then back, decays exponentially in $\sqrt{ k}$. Similarly to the way the argument in the proof of Proposition \ref{p::bd1} is concluded, this implies that with positive probability, for all $k$ there are at least  $\half p \sqrt{k} $ such successful journeys. On this event, with positive probability $R$ crosses every edge $\{i,i+1\}$ at least $c i \sqrt{\log (i+3)} > c i  $ times. This concludes the induction basis.

The induction step from $s$ to $s+1$ looks exactly the same, apart from the fact that instead of the term $\sqrt{k} $, when we use the induction hypothesis for $s$, the corresponding term we get is $c(s)g_s(k) $. The same reasoning yields that with positive probability, for all $k$ and all $i \in [2^k,2^{k+1}-1]$ we have that the edge $\{i,i+1\}$ is crossed at least  $2^k \times c(s+1)g_s(k) \asymp g_{s+1}(i) $ times. We omit the details.   
\qed
\section{Proof of Theorem \ref{thm:3}}
\label{s:proofof3}
Let $Q$ be the transition kernel of the birth and death chain with edge weights $w(k,k+1)=g_4(k)(\log_{*}^{(5)}k)^{2} $, where $g_s(k)$ is as in \eqref{e:logloglog}. Let $\|(x,y) \|_{\infty} :=\max \{|x|,|y| \} $.
Let \[S(k):=\{z \in \Z^2:\|z \|_{\infty} =k \} \quad \text{and} \quad C(k):=\{(\pm k,k),(\pm k,-k) \}\] be the $k$-sphere (w.r.t.\ $\| \bullet\|_{\infty}$) and its four corners, respectively.
Consider a Markov chain $\X=(X_n)_{n=0}^{\infty}$ on $\Z^2$ which moves according to the following rule:
\begin{itemize}
\item
From $(0,0)$ it moves to each $x \in S(1) \setminus C(1) $ with probability $1/4$.
\item When the chain is at $C(k)$ for $k>0$, it moves counter-clockwise inside $S(k)$ w.p.\ 1.
\item Whenever the chain is at $S(k) \setminus C(k)$ for $k>0$  it moves counter-clockwise inside $S(k)$ with probability $1-\frac{1}{k^2}$ and  with probability $\frac{Q(k,k+1)}{k^2}$ (respectively, $\frac{Q(k,k-1)}{k^2}$) it moves to the (unique) adjacent (w.r.t.\ $\Z^2$) vertex in $S(k+1)$ (respectively, $S(k-1)$).
\end{itemize}
By construction, if we view $\|X_{k}\|_{\infty}$ only at times at which it changes its value we get a copy of the birth and death chain corresponding to $Q$. In particular, $\X$ is transient. By Theorem \ref{thm:4}, with positive probability the chain $\X$ makes at least $cg_{2}(k) $ transitions from $S(k+1)$ to $S(k)$ for all $k$. Denote this event by $G$. By construction, at each transition from  $S(k+1)$ to $S(k)$ the probability that a certain edge connecting  $S(k+1)$ to $S(k)$ is crossed is at least $\gtrsim \frac{1}{k} $. Thus conditioned on $G$ the chance that a certain edge connecting  $S(k+1)$ to $S(k)$ is not crossed is at most $(1-\frac{c_0}{k})^{cg_{2}(k)} \le \exp [- \Omega ((\log k )  \log \log k) ]$. Since there are only $O(k)$ such edges, the conditional probability (given $G$) that one of them is not crossed is at most $\exp [- \Omega ((\log k) \log \log k) ]$, which is summable in $k$. From this it is easy to see that the probability that each edge is crossed is positive. Indeed the probability that every edge which is not incident to some vertex in $\cup_{i=0}^{N-1}S(i) $ is crossed tends to 1 as $N \to \infty$. For each fixed $N$, with positive probability the chain crosses all edges which are  incident to some vertex in $\cup_{i=0}^{N-1}S(i) $ and then returns to the origin. Applying the Markov property concludes the proof. \qed
\subsection{Comments concerning the construction:}
\label{s:conc}
\begin{itemize}
\item[(1)] The conclusion that with positive probability every edge is crossed can be strengthen to the claim that with positive probability for all $k$ every edge which is adjacent to some vertex in $S(k)$ is crossed at least $\log k \log \log k $ times.  
\item[(2)] The reason the corners $C(k)$ are treated separately is that it is impossible to move from $C(k)$ to $S(k-1)$, and so in order to avoid a bias for $(\|X_n\|_{\infty})_{n=0}^{\infty}$ we do not allow transitions from $C(k)$ to $S(k+1)$ (transitions in the opposite direction are allowed).
\item[(3)] We could have defined the transition probabilities inside each $S(k)$ (conditioned that the chain moves inside $S(k)$) to be such that the chain moves to each of the two neighbors in $S(k)$ of the current position with equal probability (note that this uses the fact that the probability of staying within a $k$-sphere is sufficiently close to 1). However, this example would  still not be reversible due to the corners $C(k)$ (see the previous comment).

\item[(4)]
The following modification to the above example should still exhibit the desired behavior, while having transition probabilities uniformly bounded from below: Whenever the chain is at $S(k) \setminus C(k)$ for $k>0$  let it move counter-clockwise inside $S(k)$  with probability $1/3$, clockwise inside $S(k)$ with probability $1/6$ and  with probability $\frac{Q(k,k+1)}{2}$ (respectively, $\frac{Q(k,k-1)}{2}$) it moves to the (unique) adjacent vertex (w.r.t.\ $\Z^2$) in $S(k+1)$ (respectively,  $S(k-1)$).  Whenever the chain is at $C(k)$ for $k>0$  let it move counter-clockwise inside $S(k)$ with probability $2/3$ and clockwise inside $S(k)$ with probability $1/3$.   In fact, it should be the case that in this modification, with positive probability, for all $k$ every edge which is incident to some vertex in $S(k)$ is crossed in both directions at least $\gtrsim \log k \log \log k$ times, apart from the edges between $C(k)$ and $S(k+1)$, which are only crossed in the direction from $S(k+1)$ to $C(k)$.

We now provide a very rough proof sketch. This sketch should be considered more as a heuristic than a real proof. First argue that the stationary measure of this walk is pointwise within a constant factor of $\pi(x):=w(\|x \|_{\infty},\|x \|_{\infty}-1)+w(\|x \|_{\infty},\|x \|_{\infty}+1)$, where $w$ are the edge-weights corresponding to the birth and death chain $Q$.\footnote{Observe that if $x $ is of distance at least 2 from a corner then $\pi P (x)=\pi(x) $.} In particular, for each $k$ its value on $S(k)$ varies by at most some constant factor. 

For each $k$, consider the Doob transform of the chain corresponding to conditioning on returning to $S(k)$. We now consider as an auxiliary chain (on $S(k)$) the aforementioned Doob transformed chain at times it returns to $S(k)$.\footnote{In practice, one has to justify the fact that conditioned on $G$ (where $G$ is as in the proof of Theorem \ref{thm:3}), when considering the first  $\half cg_{2}(k)$ returns to $S(k)$, we can couple them with those of the Doob transformed chain with probability sufficiently close to 1.}
 It is not hard to show that its mixing-time is $O( k)$ (in fact it is $\Theta (\sqrt{k})$, where here time is measured by the number of returns to $S(k)$) and that the expected number of visits to each point in $S(k)$ by the $k$-th return to $S(k)$ is $ O( 1)$. By the aforementioned mixing time estimate (and the fact that the stationary distribution of the auxiliary chain is  point-wise within a constant factor from the uniform distribution on $S(k)$), the expected number of visits to each point in $S(k)$ by the $\lceil Lk \rceil $-th return to $S(k)$ is actually $ \Theta( 1)$ (for some constant $L \ge 1 $).  From this one can further deduce that the probability of hitting any fixed point in $S(k)$ by the $\lceil Lk \rceil $-th return is $ \gtrsim 1$.\footnote{The probability a point is visited equals the ratio of the expected number of visits and the expected number of visits, conditioned that this number is positive. Both quantities are $\Theta(1)$.} Thus the probability a certain point in $S(k)$ is not hit at least $c_0 \log k \log \log k $ (where $k \ge e^e$) times by the $\half cg_{2}(k) $-th return to $S(k)$ is exponentially small in $\log k \log \log k$. From here it is not hard to conclude the proof. \item[(5)]
Despite the fact that in the example described at (4) with positive probability for all $k$ each vertex in $S(k)$ is visited $\gtrsim \log k \log \log k$ times, we believe that the expectation of the size of the range of this Markov chain grows linearly in time. By the Paley-Zygmund inequality this implies that the probability that the size of the range by time $t$ is at least half its mean (which is $\gtrsim t $) is at least some constant $c>0$ (indeed, the second moment of the range is at most $t^2$, while the square of its first moment  is $\gtrsim t^2$).  

Indeed by time $t$ the chain  typically spends a fraction of the time at distance $\gtrsim \sqrt{ t} $ from the origin. We strongly believe that  if $\mathrm{dist}(x,\mathbf{0}) \gtrsim \sqrt{t} $, then for the Markov chain from (4) we have that $\sum_{i=0}^{t}P^i(x,x) \lesssim 1$, from which one can deduce that the expectation of the size of the range indeed grows linearly in time.

 To see this, fix some small $\eps \in (0,1/10)$. For all $i \in [\half t,t]$ let $D_i$ be the event that $\mathrm{dist}(X_{i},\mathbf{0}) \ge \eps  \sqrt{t} $ and that  the number of visits to $X_i$ between time $i$ and $t$ is at least $\eps^{-2}\sup_{x:\mathrm{dist}(x,\mathbf{0}) \ge \eps \sqrt{ t}  }\sum_{j=0}^{t}P^j(X_{i},X_{i}) \le C(\eps)$. Then the probability of $D_i$ is at most $\eps^2$ and by Markov's inequality the probability that   $ \sum_{i= \half t }^t 1_{D_{i}} \ge \half \epsilon t $  is at most $\eps$. Since the probability that during the time period $[\half t,t]$ the walk spends at least half its time at distance at least  $\eps \sqrt{ t}$ from the origin is close to 1  when $\eps$ is small (denote this probability by $p$), we get that with probability bounded from below (namely, at least $p-\eps$) there are at least $(\frac{1}{4} -\eps) t $ times $i$ at which the number of returns to $X_i$ between time $i$ and $t$ is at most some constant $C(\eps)$. On this event, clearly the size of the range is of size at least $\frac{t}{8C(\eps)} $.
\item[(6)]
We note that if from every vertex in $S(k) \setminus C(k) $ the probabilities of moving clockwise and counter-clockwise are both taken to be a $1/4$ (and otherwise with probability $1/2$ the chain moves between spheres according to $Q$)  it would have required $\Omega( k \log^2 k )$ transitions from either $S(k+1)$ or $S(k-1)$ to $S(k)$ in order for every state in $S(k)$ to be visited.\footnote{This can be derived from the fact that every time some $v \in S(k)$ is visited, the expected number of returns to $v$ during the following $k$ steps is $\Omega (\log k)$.} However, by Theorem \ref{thm:4} there is no transient birth and death chain that crosses for each $k$ either $\{ k,k+1\}$ or  $\{ k-1,k\}$  at least $\Omega( k \log^2 k )  $  times with positive probability. Thus even if one overcomes the difficulty which arises from the existence of the corners, the example from the proof of Theorem \ref{thm:3} still cannot be transformed into a reversible chain with transition probabilities bounded from below.
\end{itemize}
\section{Relaxing the assumption that $\X$ admits a stationary measure}
\label{s:relax}
\begin{claim}
\label{reduction}
Let $\X$ be an irreducible transient Markov chain on a countable state space $V$ with transition kernel $P$. Let $\overrightarrow{G}(\X):=(V,\{(x,y) \in V^2:P(x,y)>0 \})$. Assume that one can delete from $\overrightarrow{G}(\X) $ some directed edges so that the in-degree of each site in the obtained graph is finite, and so that the obtained graph contains a directed path from $x$ to $y$ for every $x,y \in V$.   Then a.s.\ the trace of $\X$ is recurrent for SRW.
\end{claim}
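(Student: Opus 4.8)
The plan is to deduce Claim~\ref{reduction} from Theorem~\ref{thm:1} by replacing $\X$ with an auxiliary chain $\widehat\X$ that \emph{does} admit a stationary measure and whose trace controls that of $\X$; this is the ``reduction'' the claim is named after. Fix a set of directed transitions of $\overrightarrow{G}(\X)$ to delete so that the kept subgraph has finite in-degree at every vertex and is still strongly connected, and call the deleted transitions \emph{bad}. The idea, in the spirit of \S\ref{s:aux}, is to subdivide the bad transitions. I would put $\widehat\X$ on an enlarged state space $\widehat V=V\sqcup Z$ in which, for each $y\in V$, the bad in-edges $(x_1,y),(x_2,y),\ldots$ of $y$ are routed into $y$ through a common ``funnel'' of fresh states: the transition $x_i\to y$ is replaced by $x_i\to z_y^{(i)}\to z_y^{(i-1)}\to\cdots\to z_y^{(1)}\to y$, deterministic once inside the funnel, with the lower part of the funnel shared by all $i$; the probabilities and weights of the auxiliary steps are to be chosen, as in \S\ref{s:aux}, so that the comparison below goes through.

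I would then record the following, whose verification parallels \S\ref{s:aux}: (a) $\overrightarrow{G}(\widehat\X)$ has finite in-degree everywhere (a kept edge contributes finitely, a funnel state has in-degree two, and each $y\in V$ receives only its finitely many kept in-edges together with the single edge $z_y^{(1)}\to y$), so by Harris's criterion recalled after \eqref{e:G(X)}, $\widehat\X$ admits a stationary measure; (b) $\widehat\X$ is irreducible, since the kept subgraph is strongly connected and its transitions are carried over verbatim into $\widehat P$; (c) the non-lazy chain obtained from $\widehat\X$ by watching it only when it is in $V$ is exactly $\X$ (property (d) of \S\ref{s:aux}), so $\widehat\X$ visits any $o\in V$ exactly as often as $\X$, and is in particular transient.

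Theorem~\ref{thm:1}(ii) applied to $\widehat\X$ then gives that $(\mathrm{PATH}(\widehat\X),\widehat N)$ is a.s.\ recurrent. The remaining --- and main --- step is to transfer this back: one shows that $(\mathrm{PATH}(\X),N)$ is recurrent, whence, since $N\ge1$ on the edges of $\mathrm{PATH}(\X)$, Rayleigh's monotonicity principle (exactly as in \S\ref{s:comments}) gives that $\mathrm{PATH}(\X)$ is recurrent for SRW, which is the assertion. For the transfer one compares, edge by edge, the crossing-number network $\X$ puts on the bad edges with the crossing-number network $\widehat\X$ puts on the corresponding funnels: a crossing of $(x_i,y)$ by $\X$ is a full passage of $\widehat\X$ through that funnel, so the funnel carries at least as many crossings as the tooth $x_i\to z_y^{(i)}$, and --- with the auxiliary weights suitably chosen --- collapsing each funnel back to the single edge $\{x_i,y\}$ weighted by $N(x_i,y)$ should be a resistance-non-increasing operation, so that recurrence of $(\mathrm{PATH}(\widehat\X),\widehat N)$ forces recurrence of $(\mathrm{PATH}(\X),N)$.

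The hard part, I expect, is precisely this last comparison. Subdividing a bad edge into a long funnel lengthens the graph, and if done carelessly it moves effective resistances in the unhelpful direction; the construction --- which transitions to subdivide, how to share the funnels, how to weight the auxiliary steps --- must be arranged so that at the level of the $N$- and $\widehat N$-weighted networks, passing from $\widehat\X$ back to $\X$ cannot make the trace more transient. Getting this right (rather than producing a stationary measure, which is automatic once the in-degrees are finite) is where the work lies, and it is the analogue here of the care taken in \S\ref{s:aux} for the reversible-to-non-reversible passage.
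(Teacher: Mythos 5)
Your reduction has a genuine gap, and it sits exactly where you place it: the transfer of recurrence from the auxiliary trace back to the trace of $\X$. The comparison goes in the wrong direction. In your construction the $i$-th bad in-edge $(x_i,y)$ is replaced in $G(\widehat\X)$ by a funnel path of length roughly $i$, and $i$ is unbounded (this is unavoidable: if $y$ has infinitely many bad in-neighbours and every state of $\widehat\X$ is to have finite in-degree, the detour from $x_i$ to $y$ must grow with $i$). At the level of the crossing-number networks, the funnel joining $x_i$ to $y$ is a series path whose effective resistance is at least $1/\widehat N(x_i,z_y^{(i)})\approx 1/N(x_i,y)$ and can be as large as about $(i+1)/N(x_i,y)$ (e.g.\ if $(x_i,y)$ is the only bad in-edge of $y$ ever crossed, every funnel edge below $z_y^{(i)}$ carries weight $1$). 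So $(\mathrm{PATH}(\widehat\X),\widehat N)$ is obtained from $(\mathrm{PATH}(\X),N)$ by replacing edges with series paths of \emph{larger} resistance, and by Rayleigh plus the series law this only shows: recurrence of $(\mathrm{PATH}(\X),N)$ implies recurrence of $(\mathrm{PATH}(\widehat\X),\widehat N)$ --- the converse, which is what you need, simply does not follow, and it is perfectly consistent with your construction that the auxiliary trace is recurrent while the original trace is transient. Note also that the weights in the trace network are the crossing numbers themselves; they are dictated by the dynamics and are not ``auxiliary weights'' you are free to tune, so the caveat ``with the auxiliary weights suitably chosen'' cannot be cashed in. (Your steps (a)--(c) --- finite in-degrees, Harris, irreducibility, transience of $\widehat\X$, and Theorem~\ref{thm:1} applied to it --- are all fine; the proposal founders only on this last, essential, comparison.)

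The paper avoids this difficulty by not enlarging the graph at all. Using transience, for each vertex $v_i$ one keeps only finitely many of its in-edges, chosen so that the \emph{expected} number of crossings of all discarded in-edges of $v_i$ is at most $\eps 2^{-(i+1)}$; the transition kernel is renormalised on the kept edges to give a chain $\widetilde\X$ on the same state space $V$, with finite in-degrees (hence a stationary measure by Harris) and, after adding back finitely many edges using the hypothesis on $\overrightarrow{G}(\X)$, irreducible. One then couples $\X$ and $\widetilde\X$ so that with probability at least $1-\eps$ they follow the identical trajectory (namely on the event that $\X$ never crosses a discarded edge), so $\widetilde\X$ is transient and its trace is a.s.\ recurrent by Theorem~\ref{thm:1}; on the coupling event the traces coincide, so the trace of $\X$ is recurrent with probability at least $1-\eps$, and $\eps\to 0$ finishes. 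The point is that the comparison between the two chains is made pathwise, by making them equal with high probability, rather than by a network comparison between two different graphs --- which is precisely the step your subdivision scheme cannot supply.
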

\begin{proof}
Label $V$ by $v_1,v_2,\ldots$. Let  $\eps \in (0,1)$. Denote $E_{i}^{\mathrm{in}}:=\{(y,v_i):P(y,v_i)>0 \} $.  By transience, the expected number of visits to every vertex is finite. This means that for every $v_i$ there are collections $F_i^{\mathrm{in}}   \subset E_{i}^{\mathrm{in}} $ such that (1) $  J_{i}^{\mathrm{in}}:=E_{i}^{\mathrm{in}} \setminus F_{i}^{\mathrm{in}} $ is finite for all $i$ and  (2) the expected number of times (including multiplicities) that all of the directed edges in $ F_{i}^{\mathrm{in}}  $ are crossed is at most $\epsilon 2^{-(i+1)}$ (i.e., $\mathbb{E}_o[| \{t \in \Z_+ :(X_t,X_{t+1}) \in F_{i}^{\mathrm{in}} \} |] \le \eps 2^{-(i+1)} $). 

Denote $\tilde J:=\bigcup_{i \in \N} J_{i}^{\mathrm{in}}  $.
Now let $\widetilde \X $ be the Markov chain with $G(\widetilde \X)=(V,\tilde J) $ whose transition probabilities are given by $\widetilde P(v_i,y):=\frac{\Ind{(v_i,y) \in \tilde J  }P(v_{i},y)}{\sum_{u :\, (v_i,u) \in \tilde J  }P(v_i,u)} $. By the assumption on  $\overrightarrow{G}(\X) $, by increasing $J_{i}^{\mathrm{in}}  $ for some $i$ (so that it is still finite), if necessary, we may ensure that  $\widetilde \X $ is irreducible. By Harris \cite{Harris} and the finiteness of the $ J_{i}^{\mathrm{in}}  $-s,   $\widetilde \X $ admits a stationary measure.    

 By construction, it is clear that one can couple $\X$ and $\tilde \X$ (both starting from $o$) so that with probability at least $1-\eps$ they follow the same trajectory (i.e., $X_t=\widetilde X_t $ for all $t$). It follows that $\widetilde \X $ returns to $o$ finitely many times with positive probability, and hence is transient. Hence, by Theorem \ref{thm:1} its trace is  a.s.\  recurrent for the SRW. It follows that the same holds for $\X$ with probability at least $1-\eps$. The proof is concluded by sending $\eps$ to $0$. 
\end{proof}

\section*{Acknowledgements}
The examples from the proofs of Theorems \ref{thm:3} and \ref{thm:4} are due to Gady Kozma. We are tremendously grateful to him for allowing us to present them. We would also like to thank  Tom Hutchcroft and Russell Lyons for  helpful suggestions.

\bibliographystyle{plain}
\bibliography{traces}

\end{document}